\definecolor{my-blue}{rgb}{0.0,0.0,0.6}
\definecolor{my-red}{rgb}{0.5,0.0,0.0}
\definecolor{my-green}{rgb}{0.0,0.5,0.0}
\definecolor{nicos-red}{rgb}{0.75,0.0,0.0}
\definecolor{light-gray}{gray}{0.6}
\definecolor{really-light-gray}{gray}{0.8}
\definecolor{sussexg}{rgb}{0.0,0.5,0.5}
\definecolor{sussexp}{rgb}{0.5,0.0,0.5}
\newtheorem{theorem}{\sc Theorem}[section]
\newtheorem{lemma}[theorem]{\sc Lemma}
\newtheorem{proposition}[theorem]{\sc Proposition}
\newtheorem{corollary}[theorem]{\sc Corollary}
\newtheorem{definition}[theorem]{\it Definition}
\numberwithin{equation}{section}
\theoremstyle{remark}
\newtheorem{remark}[theorem]{Remark}
\numberwithin{equation}{section}
\newcommand{\be}{\begin{equation}}
\newcommand{\ee}{\end{equation}}
\newcommand{\beqar}{\begin{eqnarray*}}
\newcommand{\eeqar}{\end{eqnarray*}}
\newcommand{\bea}{\begin{eqnarray*}}
\newcommand{\eea}{\end{eqnarray*}}
\newcommand{\beqarl}{\begin{eqnarray}}
\newcommand{\eeqarl}{\end{eqnarray}}
\newcommand{\R}{\mathbb{R}}
\newcommand{\doubleint}{ \int_{\R_+}\int_{\R_+}}
\newcommand{\la}{\langle}
\newcommand{\ra}{\rangle}
\newcommand{\NN}{\mathbb{N}}
\newcommand{\E}{\mathbb{E}}
\newcommand{\lp}{\left(}
\newcommand{\rp}{\right)}
\def\bE{\mathbb{E}}
\def\bN{\mathbb{N}}
\def\bP{\mathbb{P}}
\def\bR{\mathbb{R}}
\def\bZ{\mathbb{Z}}
 \def\Z{\bZ} 
\def\R{\bR}
\def\N{\bN}
\def\y{\mathbf{y}}
\def\Y{\mathbf{Y}}
\def\z{\mathbf{z}}
\def\e{\varepsilon}
\def\e{\varepsilon}
\def\E{\bE}
\def\P{\bP} 
\definecolor{darkgreen}{rgb}{0.0,0.5,0.0}
\definecolor{darkblue}{rgb}{0.0,0.0,0.3}
\definecolor{nicosred}{rgb}{0.65,0.1,0.1}
\definecolor{light-gray}{gray}{0.7}
\begin{document}

\allowdisplaybreaks

\usdate
\title[Hydrodynamics for a wealth model]
{Continuum and thermodynamic limits for a simple random-exchange model}

\author[B.~D\"uring]{Bertram D\"uring}
\address{Bertram D\"uring\newline University of Sussex\\ Department of  Mathematics \\ Falmer Campus\\ Brighton BN1 9QH\\ UK.}
\email{bd80@sussex.ac.uk}
\urladdr{http://users.sussex.ac.uk/~bd80} 

\author[N.~Georgiou]{Nicos Georgiou}
\address{Nicos Georgiou\newline University of Sussex\\ Department of  Mathematics \\ Falmer Campus\\ Brighton BN1 9QH\\ UK.}
\email{N.Georgiou@sussex.ac.uk}
\urladdr{http://www.sussex.ac.uk/profiles/329373} 

\author[S.~Merino-Aceituno]{Sara Merino-Aceituno}
\address{Sara Merino-Aceituno\newline University of Vienna\\ faculty of Mathematics \\ Oscar-Morgenstern-Platz 1\\ 1090 Vienna\\ Austria, and\newline 
University of Sussex\\ Department of  Mathematics \\ Falmer Campus\\ Brighton BN1 9QH\\ UK.}
\email{sara.merino@univie.ac.at}
\urladdr{https://saramerinoaceituno.wordpress.com} 

\author[E.~Scalas]{Enrico Scalas}
\address{Enrico Scalas\newline University of Sussex\\ Department of  Mathematics \\ Falmer Campus\\ Brighton BN1 9QH\\ UK.}
\email{E.Scalas@sussex.ac.uk}
\urladdr{http://www.sussex.ac.uk/profiles/330303} 

\thanks{B. D\"uring has been supported by the Leverhulme Trust research project grant: Novel discretisations for higher-order nonlinear PDE (RPG-2015-69). \\
\indent N. Georgiou was partially supported by the EPSRC First Grant EP/P021409/1: The flat edge in last passage percolation. \\
\indent S. Merino-Aceituno is supported by the Vienna Science and Technology Fund (WWTF) with a Vienna Research Groups for Young Investigators, grant VRG17-014. \\
\indent E. Scalas has been supported by the JSPS Invitational Fellowship S18099. }

\keywords{Wealth distribution, mean-field limits, functional limits, Markov chains, kinetic equations, partitions of integers}
\subjclass[2000]{60J05, 60F17, 35Q91, 35Q20, 60J10, 60J20, 82B31, 82B40} 
\date{\today}

\maketitle

\begin{abstract}
We discuss various limits of a simple random exchange model that can be used for the distribution of wealth. We start from a discrete state space - discrete time version of this model and, under suitable scaling, we show its functional convergence to a continuous space - discrete time model. Then, we show a thermodynamic limit of the empirical distribution to the solution of a kinetic equation of Boltzmann type. We solve this equation and we show that the solutions coincide with the appropriate limits of the invariant measure for the Markov chain. In this way we complete Boltzmann's program of deriving kinetic equations from random dynamics for this simple model. Three families of invariant measures for the mean field limit are  discovered and we show that only two of those families can be obtained as limits of the discrete system and the third is extraneous. Finally, we cast our results in the framework of integer partitions and strengthen some results already available in the literature.
\end{abstract}

\section{Introduction}

This study was originally motivated by a new approach to macroeconomics modelling based on (1) continuous-time Markov chains to model stochastic dynamics interactions among agents and (2) combinations of stochastic processes and non-classical combinatorial analysis, called combinatorial stochastic processes. Such an approach was extensively presented in \cite{Aoki07}. Those authors argue that, in case (1), the master equation describes how states of the models evolve stochastically in time and, in case (2), combinatorial stochastic processes are applied to describe the random formation of clusters of agents as well as the distribution of cluster sizes.
Mathematically, the two approaches are so strictly related that it is not necessary to distinguish between them. This point was already implicitly made in Chapter 10 of \cite{garibaldiscalas}. Moreover, both approaches are related to kinetic equations of Boltzmann type used in statistical physics \cite{ParTosBook}.  

We previously worked on the class of Markov-chain models described below in \cite{DGS} where we focused on the existence and uniqueness of the invariant measures and on the stability of the Markov chains. Some results in this article can be found in the expository chapter \cite{DGMS}, written with an eye for economists and with all the proofs omitted. 

We explore the connection between combinatorial stochastic processes and kinetic equations of Boltzmann type via functional limit theorems of properly scaled processes in the spirit of \cite{Billingsley,ethier2009markov,Jacod}.

In this article we study a simple discrete model for wealth dynamics using a coagulation - fragmentation process. This is the same as the one in \cite{DGS, DGMS}. 

The \emph{discrete space, discrete time (DS-DT)} model is a Markov Chain on the integers partitions of $n$ that have size $N$. 
In other words, the state space is comprised of all non-negative integer vectors $ {\bf x}^{n, N} =  (x_1, \ldots x_N) \in \Z_+^N$ so that $\sum_{i=1}^N x_i  = n$. 
The $x_i $'s represent the wealth of the $i$-th individual and the superscripts are there to remind us of the total wealth and number fo agents. 
We denote the state space by $S^{(n)}_{N-1} = n \Delta_{N-1} \cap \Z^{N}$,  where 
\be \label{eq:D_N}
\Delta_{N-1} = \Bigg\{ \mathbf{x} = (x_1, \ldots, x_N): \, x_i \geq 0 \, \text{ for all } i = 1, \ldots, N \text{ and } \sum_{i=1}^N x_i = 1 \Bigg \},
\ee
is the $N$-dimensional unit simplex.

At every discrete time step, we choose an ordered pair of indices from $1$ to $N$ uniformly at random (say $(i,j)$) and add the individual wealths $x_i + x_j$ of the the agents. After that, the first chosen agent $i$ receives a uniform portion of the total wealth between $0$ and $\max\{x_i + x_j - 1, 0\}$ and the rest goes to the second agent $j$. 
Let ${\bf X}_t^{n, N}$ denote the wealth distribution at time $t$. The transition probabilities for this chain are given by 
\begin{align}
\label{transition}
\mathbb{P}&\{\mathbf{X}^{n, N}_{t+1}= \mathbf{x}' | \mathbf{X}^{n,N}_t=\mathbf{x}\} \notag\\
&=\sum_{(i,j): i \neq j}\left\{  \frac{1}{N} \frac{1}{N-1}\Bigg(\frac{\mathbbm1\{ x_i + x_j \ge 1, x'_j\ge 1\}}{x_i+x_j} + \mathbbm1\{ x_i + x_j = 0\} \Bigg)
\delta_{x_i+x_j,x'_i+x'_j} \prod_{k \neq i,j} \delta_{x_k,x_k'} \right\}.
\end{align}
As it turns out, the transition matrix for the chain is doubly stochastic, therefore the invariant distribution is uniform on $S^{(n)}_{N-1}$ which is also obtained as $t \to \infty$ because of irreducibility and aperiodicity. 

After studying the discrete chain, it would be more realistic to allow the total wealth $n$ to increase, but in general that would only alter the state space. However, 
there is way to converge to a \emph{continuous space, discrete time (CS-DT) model}, if we alter the discrete model slightly. In particular, instead of looking at the distribution of wealth, we look at the distribution of the {\bf proportion } of wealth, namely the process ${\bf Y}^{n, N} = n^{-1} {\bf X}^{n, N}$ 
which is a rescaling of the original discrete process by the total wealth. 
The state space for the  ${\bf Y}^{n, N}$ process is the meshed simplex
\be \label{eq:D_Nn}
\Delta_{N-1}(n) = \Big\{ (q_1, \ldots, q_N): 0 \le q_i \le 1, \sum_{i=1}^N q_i = 1, nq_i \in \N_0 \Big\} \subset \Delta_{N-1}.
\ee
Then in \cite{DGS}, it was shown (Proposition 3) that as $n \to \infty$ one had the weak convergence of one-dimensional marginals  
\be
{\bf Y}_t^{n, N} \Longrightarrow {\bf X}_t^{\infty, N}\, \text{ as } n \to \infty,
\ee
under the mild assumption that the initial distributions of ${\bf Y}^{n, N}$, $\mu_0^{n, N}$ converge weakly to some distribution $\mu_0^{\infty, N}$ on $\Delta_{N-1}$.
Process ${\bf X}_t^{\infty, N}$ is identified as a continuous space, discrete time Markov chain on $\Delta_{N-1}$. At each discrete time step $t$, an ordered pair of agents, say $(i,j)$ is selected uniformly at random, with total proportion of wealth $x_i+x_j$. Then an independent uniform random variable $u_{t, (i,j)} \sim\text{Unif} [0,1]$ is drawn and the new proportion of wealth for agent $i$ is $u_{t, (i,j)} (x_i + x_j)$  while for agent $j$ is $(1 - u_{t, (i,j)})(x_i + x_j)$. 
Note that the wealth of agents are exchangeable random variables; while the description above needs ordered pairs of agents, it has no bearing on the distribution of the eventual wealth, as both $u_{t,(i,j)}$ and $1 - u_{t,(i,j)}$  are uniformly distributed on $[0,1]$.

For the  CS-DT chain ${\bf X}_t^{\infty, N}$, it was further shown that the invariant distribution of wealth proportions as $t \to \infty$ is uniform on $\Delta_{N-1}$. 

Here, we go a few steps further. First, we show the process level convergence 
\be
{\bf Y}^{n, N} \Longrightarrow {\bf X}^{\infty, N}\, \text{ as } n \to \infty,
\ee
by showing convergence of the finite dimensional marginals of the process. 
Then, using the Poissonization trick \cite{Pollard}, we will change time and consider a continuous-time version of our continuous-space Markov chain. In an other appropriate scaling limit, this will lead to one-dimensional kinetic equations of Boltzmann type as studied e.g. in \cite{Bassetti}. Stochastic mean-field dynamics for interacting particle systems are well-studied; for example see  \cite{daipra2017} for models where components are exchangeable, as in our model here. 


%

\subsection{Kinetic equations for wealth models}
A one-dimensional caricature of the three-dimensional Boltzmann equation for Maxwell molecules is the Kac model. While simpler, it retains key properties of the original Boltzmann equation, such as energy conservation in binary collisions. The Kac equation has been deeply analysed using Fourier analysis techniques, e.g. in \cite{bobylev1988theory, bobylev2000some}. This model also allows for a rigorous passage from the kinetic model with binary interactions to a Fokker–Planck equation in the grazing collisions limit \cite{toscani1998grazing, villani2002review}.

In the last two decades, the mechanism of the binary interaction, originally developed for the Boltzmann equation, has been fruitfully adapted to describe collective dynamics in many-agent socio-economic systems. The basic idea is to describe the behaviour of a sufficiently large number of interacting agents in the socio-economic system by pairwise, microscopic interactions, similar to the physical models of rarefied gas dynamics, where molecules collide inside a container. One can then study the long-time dynamics of the system and observe the formation of macroscopic distributions, depending on the details of the microscopic interactions. This approach has been successfully followed to model wealth distribution in simple market economies \cite{cordier2005kinetic, during2007hydrodynamics, during2008kinetic, during2008international}, wealth distribution under taxation \cite{toscani2009wealth, during2018kinetic}, opinion formation \cite{toscani2006kinetic, during2009boltzmann, during2015opinion}, asset pricing \cite{during2016kinetic}, continuous models for ratings \cite{jabin2015continuous, during2019boltzmann}, and others.

\subsection{Kinetic equations as limits of discrete particle models}

The derivation of kinetic equations from discrete particle models is classical in kinetic theory. This is generally a hard problem that involves proving that `propagation of chaos' holds for the system. This corresponds to showing that the particles become statistically independent when their number grows large. Typically, proving propagation of chaos allows to close the BBGKY hierarchy, i.e., the hierarchy of equations giving the evolution of the marginal distributions associated to the system \cite{cercignani2013mathematical,spohn1984boltzmann}. In the case of the classical Boltzmann equation, which describes hard-sphere collision dynamics, the kinetic limit was shown in \cite{lanford1975time}, though there is still a proof missing for long times \cite{gallagher2012newton}. 

In this work we will use a probabilistic approach in order to obtain the kinetic equation of the system under consideration. On this account, Sznitmann \cite{sznitman1991topics} showed the kinetic limit for McKean-Vlasov systems of Stochastic Differential equations using a coupling argument. This argument has been further extended recently in \cite{diez2019propagation} to a piece-wise deterministic Markov process. Previous works also investigate the speed of convergence to the kinetic equation in terms of the number of particles. 

For our results, we must use a different approach, since we consider a pure jump process. Particularly, the methodology used is based on computing the limit of the martingale formulation associated to the jump (Markov) process. The methodology used here has been applied with great success to the investigation of coagulation models and the Smoluchowski equation in \cite{norris1999smoluchowski,norris2000cluster} and later to a system of instantaneous coagulation-fragmentation processes in \cite{merino2016isotropic}. 

\subsection{Content and structure} In Section \ref{sec:models}, we introduce the three connected models of the evolution of wealth, and present our results. The first one is an alternative formulation of the discrete model (discrete space, discrete time) with conserved wealth. The state space of the process is a discrete finite dimensional simplex. The dimension is the number of agents, and at each time step two agents interact (or collide). The Markovian evolution of the process is that of a discrete coagulation-fragmentation process. 

The second model is obtained as a scaling spatial limit of the first one and is effectively the continuous space, discrete time analogue. Section \ref{sec:procon} is dedicated to show process level convergence from the discrete to the continuous space model. Finally, the third model is the mean-field continuous limit for the empirical distribution of wealth. Agents are viewed as particles with binary interactions. In Section \ref{sec:sara2}, by assuming the coagulation-fragmentation process jumps at the times of a Poisson process and letting the number of agents tend to infinity while appropriately scaling time, we obtain the relevant kinetic equations. 

Section \ref{sec:sara} is concerned with invariant distributions for the kinetic equation. While we find at least three potential invariant measures for the limiting empirical wealth (a delta, an exponential and a family of truncated exponential distributions) we show that from the particle system description only two of these are acceptable limits (the delta and the exponential). This highlights the power of the probabilistic approach, as a purely analytical one would not be able to {\em a priori} exclude that family. Similar laws of large numbers for empirical measures of particle systems can be found for a huge class of processes in the literature, e.g.\cite{grosskinsky2019}.

Finally, Section \ref{sec:partitions} is an application of this theory when we view the process not as a wealth evolution, but the evolution of a Markov chain on integer partitions. As a by-product we recover some theorems of \cite{vershik2003asymptotics}. Results of Section \ref{sec:partitions} are not mentioned earlier in the paper, and the interested reader can directly start reading that section. 
 
To make the paper as self-consistent as possible, we have included an appendix on functional limit theorems for stochastic processes. 

\subsection{Acknowledgements:} We would like to thank Christina Goldschmidt and Stefan Grosskinsky for valuable and interesting discussions and for suggestion of related references.
 
\section{The models and results}
\label{sec:models}
 
We briefly describe the various models we are using, and collect the main results for an organised reference.
 
We consider $N$ agents (originally $N$ is fixed)  and wealth $W_N$ (originally fixed to be and integer denoted by $n$). 

\subsection{Equivalent construction of the DS-DT process.}

For any $n \in \N$ the process ${\bf Y}^{(n)}$ is defined on $\Delta_{N-1}(n)$ given by $\eqref{eq:D_Nn}$, and we emphasise that  for every $n$, 
$\Delta_{N-1}(n) \subset \Delta_{N-1}$, given by $\eqref{eq:D_N}$. $\Delta_{N-1}(n)$ is treated as the meshed simplex $\Delta_{N-1}$;
the mesh size is $n^{-1}$, which is precisely the reciprocal of the total wealth $W_n = n$.  

Let $\mathcal{P}^n$ denote the law of the process ${\bf Y}^{(n)} = ({\bf Y}^{(n)}_0, {\bf Y}^{(n)}_1, \ldots,  {\bf Y}^{(n)}_k, \ldots ) \in (\Delta_{N-1}(n))^{\N_0} \subset (\Delta_{N-1})^{\N_0}$.
The measure for $k+1$-th dimensional marginal $({\bf Y}^{(n)}_0, {\bf Y}^{(n)}_1, \ldots,  {\bf Y}^{(n)}_k)$ is denoted by 
\be\label{eq:induced}
\mathcal{P}^n_k\{ \cdot\} = \mathcal{P}^n\big\{ ({\bf Y}^{(n)}_0, {\bf Y}^{(n)}_1, \ldots,  {\bf Y}^{(n)}_k) \in \cdot \big\}.
\ee
Similarly, denote by $\mathcal{P}^\infty$ and $\mathcal{P}^\infty_k$ the corresponding quantities for ${\bf X}^{\infty}$.  The law of ${\bf Y}_0^{(n)}, {\bf X}_0^{(\infty)}$ are denoted by $\mu_0^{(n)} = \mathcal P^{(n)}_0$ and  $\mu_0^{(\infty)} = \mathcal P^{(\infty)}_0$ respectively.

Starting from an initial distribution $\mu_0^{(n)}$ we construct the process ${\bf Y}^{(n)}$ using an i.i.d.\ sequence of uniform random variables  
\be \label{eq:newuni}
U^{(n)}_{i, j}(k) \sim \text{Unif}[0,1], \qquad 1 \le i , j \le N, i \neq j, k \in \N_0, n \in \N.   
\ee
%
These random variables from \eqref{eq:newuni} suffice to construct the whole process. The variable $k$ plays the role of time index, and $(i,j)$ is the ordered pair of agents that are selected. We assume -and use without a particular mention- that random variables \eqref{eq:newuni} are independent of the initial distribution $\mu_0^{(n)}$. 

For any $x \in \R_+$ we define 
\[
[ x]_n =  \frac{a}{n}, \quad \text{  so that  } \frac{a}{n} \le x <  \frac{a+ 1}{n},  \quad a \in \N_0,
\]
and use this symbol for notational convenience when we define the evolution of the process directly on $\Delta_{N-1}(n)$.

 Let 
${\bf Y}_k^{(n)} = (y_1(k), \ldots y_N(k)) \in \Delta_{N-1}(n)$ be the vector of discrete wealths, normalised so that the total wealth is $1$. Then, if indices $i.j$ were chosen to interact at time step $k$, the total wealth at time $k+1$ would become
\begin{align*}
{\bf Y}_{k+1}^{(n)} &= (y_1(k), \ldots, \underbrace{[U_{i,j}^{(n)}(k)(y_i(k) + y_j(k)) ]_n}_{y_{i}(k+1)}, \ldots,  \underbrace{y_i(k) + y_j(k) -y_{i}(k+1)}_{y_{j}(k+1)}, \ldots, y_N(k)) \\
&=g_{i,j}(\y_k, U_{i,j}^{(n)}(k)).
\end{align*}
Check to see that the coordinate 
$[U_{i,j}^{(n)}(k)(y_i(k) + y_j(k)) ]_n$ is uniformly distributed on the set $\{ 0, n^{-1}, \ldots, (y_i(k) + y_j(k) - n^{-1})\vee 0 \} $, and therefore this procedure gives the same process as described in \cite{DGS}. The function $g_{i,j}$ is a measurable function that depends on the value of the current state and the new uniform random variable, and the last display acts as the definition of $g_{i,j}$. 

We prove the following theorem, which guarantees process-level convergence.

\begin{theorem}\label{thm:processlevel}
Assume the weak convergence of measures 
\be
\mu_0^{(n)} \Longrightarrow \mu_0^{(\infty)}, \quad \text{ as } n \to \infty.
\ee
Furthermore, assume the weak convergence (as $n \to \infty$) of the i.i.d.\ sequence 
\be
 \{ U^{(n)}_{i, j}(k) \}_{i,j,k} \Longrightarrow \{ U^{(\infty)}_{i, j}(k) \}_{i,j,k},
\ee
so that the limiting sequence $\{ U^{(\infty)}_{i, j}(k) \}_{i,j,k}$ is a sequence of i.i.d.\ uniform $[0,1]$ random variables that are also independent from  $\mu_0^{(\infty)}$.

Then 
\[
\mathcal{P}^n \Longrightarrow \mathcal{P}^{\infty}, \quad \text{ as } n \to \infty.
\]
\end{theorem}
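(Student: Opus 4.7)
The plan is as follows. Since the state space $(\Delta_{N-1})^{\N_0}$ carries the product topology, weak convergence $\mathcal{P}^n \Rightarrow \mathcal{P}^\infty$ reduces to convergence of every finite-dimensional marginal $\mathcal{P}^n_k \Rightarrow \mathcal{P}^\infty_k$. Fix $k \ge 0$. The construction preceding the theorem expresses $({\bf Y}^{(n)}_0,\ldots,{\bf Y}^{(n)}_k)$ as the image under a deterministic measurable function
\[
F^n_k\bigl({\bf Y}^{(n)}_0,\{U^{(n)}_{i,j}(\ell)\}_{\ell<k},\{\sigma(\ell)\}_{\ell<k}\bigr),
\]
obtained by iteratively applying the update $g_{i,j}$; here $\sigma(\ell)$ denotes the uniformly chosen ordered pair at time step $\ell$, encoded by an auxiliary i.i.d.\ sequence common to every $n$. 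The limiting process ${\bf X}^{(\infty)}$ admits the analogous representation $F^\infty_k$, where $g_{i,j}$ is replaced by the ``untruncated'' map $g^\infty_{i,j}(\y,u)$ (that is, omitting $[\cdot]_n$).

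Two elementary properties of these maps drive the proof. First, $g^\infty_{i,j}:\Delta_{N-1}\times [0,1]\to\Delta_{N-1}$ is Lipschitz, since its only nontrivial entries are $u(y_i+y_j)$ and $(1-u)(y_i+y_j)$, so $F^\infty_k$ is jointly continuous in $(\y_0,\{u_{i,j}(\ell)\})$ for each fixed pair-selection sequence. Second, the rounding obeys $|[x]_n-x|\le 1/n$ uniformly in $x$; iterating through $k$ compositions and using the Lipschitz continuity of $g^\infty_{i,j}$ yields a bound $\|F^n_k-F^\infty_k\|_\infty \le C_k/n$ on the full domain, so $F^n_k\to F^\infty_k$ uniformly as $n\to\infty$.

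Combining these ingredients finishes the argument. By Skorokhod's representation theorem, the assumed weak convergences of $\mu_0^{(n)}$ and of the uniform family can be upgraded to almost sure convergence on a common probability space (with the pair-selection sequence chosen to be shared across $n$, and independent of the other inputs). Writing $\xi^n$ for the triple of inputs at level $n$ and $\xi$ for its almost sure limit,
\[
\bigl|F^n_k(\xi^n)-F^\infty_k(\xi)\bigr|\le \|F^n_k-F^\infty_k\|_\infty+\bigl|F^\infty_k(\xi^n)-F^\infty_k(\xi)\bigr|\xrightarrow[n\to\infty]{} 0 \quad \text{a.s.},
\]
by the uniform approximation and the continuity of $F^\infty_k$. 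Hence $\mathcal{P}^n_k\Rightarrow\mathcal{P}^\infty_k$ for every $k$, which gives the theorem.

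The main obstacle is the bookkeeping behind the uniform estimate $\|F^n_k-F^\infty_k\|_\infty \le C_k/n$: each iterate compounds two sources of error, namely the new $\mathcal{O}(1/n)$ rounding at the current step and the propagation of previously accumulated error through the Lipschitz map $g^\infty_{i,j}$, and one must track both simultaneously in an induction on $k$. Beyond that, the proof is a clean combination of Skorokhod representation with a continuous-mapping/Slutsky-type argument, with the discrete pair-selection randomness decoupled from the continuous inputs so that it causes no difficulty.
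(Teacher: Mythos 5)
Your proof is correct and follows essentially the same route as the paper's: reduce to finite-dimensional marginals (tightness being free from compactness of the state space), represent the $k$-step chain as an iterated application of the update maps, bound the rounding discrepancy between $g_{i,j}$ and its untruncated continuous version by $O(1/n)$ uniformly, and pass to the limit using the assumed joint weak convergence of the initial condition and the uniforms. The only difference is in the packaging: you finish with Skorokhod representation plus a continuous-mapping argument, whereas the paper carries out the equivalent computation directly on expectations of test functions via the Markov property, absorbing the $O(1/n)$ rounding error through the uniform continuity of $f$ on the compact state space.
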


The theorem gives that the order in which we take limits in the diagram of Fig. 1 is immaterial and the diagram is commutative.
\begin{figure}[h]
\begin{center}
\begin{tikzpicture}[>=latex, scale=1.5]
\draw(-1,5)node{DS-DT, ${\bf Y}^{n , N} = n^{-1}{\bf X}^{n, N} \in \Delta_{N-1}(n)$} ;
\draw[->] (1.3,5)--(4,5);
\draw(2.5,5.25)node{$ n \to \infty $} ;
\draw(5.5,5)node{CS-DT, ${\bf X}^{\infty, N} \in \Delta_{N-1}$};
\draw[->](-1,4.5)--(-1,2);
\draw(-0.25,3.25)node{$t \to \infty$};
\draw[->](5.5,4.5)--(5.5,2);
\draw(6.25,3.25)node{$t \to \infty$};
\draw(-1,1.5)node{DS-DT, $ \mu^{n, N}_{\infty} \sim \text{Unif}(\Delta_{N-1}(n))$} ;
\draw[->] (1,1.5)--(3.8,1.5);
\draw(2.5,1.75)node{$ n \to \infty $} ;
\draw(5.5,1.5)node{CS-DT, $\mu^{\infty, N}_{\infty} \sim \text{Unif}(\Delta_{N-1})$};
\end{tikzpicture}
\caption{ Commutative diagram demonstrating the various limiting measures, depending on the order limits are taken, when the total wealth remains constant. Measures  $\mu^{n, N}_{\infty}$  and  $\mu^{\infty, N}_{\infty}$ denote the invariant distributions for the two Markov chains respectively.}
\end{center}
\label{fig:D1}
\end{figure}

This will be proven in Section \ref{sec:procon}. Horizontal arrows in the diagram of figure 1 denote weak convergence, but the top one can be upgraded to almost sure convergence if we are concerned with finite sample paths. 

Moreover, we will investigate the mean field limit of the CS-DT process, as $N \to \infty$. In order to do this using kinetic theory, it is useful to switch to a continuous time Markov chain, where jump times coincide with those of a rate 1 Poisson process, which is why it is called a ``Poissonisation trick". It is standard to argue that the long time behaviour of the discrete time process is the same as that of the Poissonised one when $N$ is fixed, irrespective of the rate of the Poisson process. The finite time distribution of the proportions of wealth for the CS-CT Poissonised process, which we momentarily denote by ${\bf X}_t^{\text{Pois}}$, can also be rigorously obtained by standard conditioning on the number of Poisson events up to time $t$, using the following equation
\be
\P\{ {\bf X}_t^{\text{Pois}} \in A \} = \sum_{\ell = 0}^{\infty} \mathbf P\{ {\bf X}^{\infty, N}_\ell \in A \} P\{ N_t = \ell \} = \sum_{\ell = 0}^{\infty} \mathbf P\{ {\bf X}^{\infty, N}_\ell \in A \} \frac{e^{-t/N}t^{\ell}}{\ell ! \, N^{\ell}}.
\ee
 $N_t$ is the background Poisson process with rate $1/N$ and $A$ is any Borel subset of the simplex. We omit the argument that the limiting  distribution is still uniform on the simplex.

\subsection{Martingale formulation for the CS-DT model.}
\label{sub:mgf}

In general, it is not necessary to restrict to a case  where the total wealth is 1 for all $N$, the same models can be studied when the total wealth is a function of $N$; here we do so for the kinetic model. Let us first introduce some notation. The total wealth in a system of $N$ agents is denoted by a value $W_N \in \R_+$ (which we also allow to be 0).
The state of the process at time $t$ is a vector of non-negative real numbers 
\[
{\bf X}_t^{N} = (X^{1,N}_t, \ldots, X^{N,N}_t )
\] 
with state space 
\[
\Delta_{W_N}:=\Big\{(x_1,\hdots, x_N) : x_i \ge 0 \text{  for all $1 \le i \le N$  } \text{ and  }  \sum_{i=1}^N x_i=W_N\Big\}.
\]
The dynamics on $\Delta_{W_N}$ are given by binary interactions, where an ordered pair of two agents $(i,j)$ is chosen uniformly at random. The interactions are assumed to happen at constant rate $1/N$, at the events of a background Poisson process. After the interaction, the wealth of the pair $(X^{i,N}, X^{j,N})$ is changed to $((X^{i,N})', (X^{j,N})')$ with
\beqar
(X^{i,N})' &=& r(X^{i,N}+X^{j,N}),\\
(X^{j,N})' &=& (1-r) (X^{i,N}+X^{j,N}),
\eeqar
where $r$ is a random variable with uniform law on $[0,1]$ that is drawn at time $t$, independently of the past of the chain. Interactions preserve the total mass,
\be \label{eq:total_wealth}
W_N := \sum_{i=1}^N X^{i,N},
\ee
and, therefore, the dynamics take place on $\Delta_{W_N}$.
We will consider two cases:
\begin{itemize}
	\item[(i)] Absolute wealth: $X^{i,N}$ represents the wealth of agent $i$ and $W_N$ represents the total wealth of the system;
	\item[(ii)] Relative wealth: in this case $X^{i,N}$ represents the proportion of wealth of agent $i$ and $W_N=1$ for all $N$.
\end{itemize}

We are interested in studying the case when the number of agents grows large, i.e., $N\to \infty$. The first thing to observe is that agents are \emph{exchangeable} by virtue of the non-preferential dynamics. Questions of interest also reflect that, in the sense that we want to know how much wealth the richest agent has, rather than who is the richest agent, since they all have the same probability of being the most rich. For this reason, we will focus our study on the empirical distribution
\be \label{eq:emp}
\mu^N_t(x) = \frac{1}{N}\sum_{i=1}^N\delta_{X^{i,N}_t}(x).
\ee
The empirical distribution $\mu^N_t$ is a random probability measure on $\R_+$ that depends on the realisation of the Markov chain. For any interval $[a, b]$, 
\[
\mu^N_t([a, b]) =  \frac{1}{N}\sum_{i=1}^N\delta_{X^{i,N}_t}[a, b] = \frac{1}{N}\sum_{i=1}^N \mathbbm1\{ a\le X^{i,N}_t \le  b \} =  \frac{\text{card}\{ i: \text{agent $i$'s wealth } \in [a, b]\}}{N}.
\] 
In general, for any measure $\mu$ on $\R_+$, and any $\mu$-measurable function $g$, we define 
the brackets $\la \cdot, \cdot \ra$ by
\be \label{eq:def_brackets}
\la g, \mu\ra := \int_{\R_+} g(x)\mu(dx).
\ee
When $\mu$ is a probability measure, the bracket notation is just another way to denote the expected value $\E_{\mu}(g)$.
With this definition, when the measure is  $N\mu^N_t(x_0)$ for some fixed $x_0$, the bracket  
 $\la 1, N\mu_t(x_0)\ra$ gives the number of agents with wealth precisely $x_0$ at time $t$. Equivalently, keep the empirical measure as $\mu^N_t(x)$ and set $g(x) = N\sum_{i=1}^N\mathbbm1\{ X^{i, N}_t = x_0 \}$ in order to obtain the same interpretation.

The total wealth in the system represented by  $W_N$ at time 0 as in Eq. \eqref{eq:total_wealth}, and we can write this fact in terms of the empirical distribution as 
\be \label{eq:total-wealth} 
W_N = N\la x, \mu^N_0 \ra.
\ee
The total wealth at time $t$ is given by  $N\la x, \mu^N_t \ra$ and it remains fixed  for all $t \ge 0$ if we assume a conserved total wealth. 
Notice that if $W_N/N\to m$ as $N\to \infty$ then we also have that 
\be \label{eq:limit1}
\lim_{N \to \infty} \la x, \mu^{N}_0\ra = m.
\ee
If $\mu^{N}_0 \Longrightarrow \mu_0$ weakly for some probability measure $\mu_0$ and $m=0$, Eq. \eqref{eq:limit1} would imply that $\mu^{N}_0(x) \Longrightarrow \delta_0(x)$, as the measure has no support in the negative reals.
 
For a fixed $t$, the empirical measure $\mu_t^N$ is an element of the space of probability measures $\mathcal{M}_1$ on $\R_+$ and it only changes whenever an interaction event occurs. It is a function of the Markov chain ${\bf X}_t^N$ and it is also a Markov chain.

In order to describe its generator $\mathcal{G}$, we define the measure $\mu^{(x,y,r),N}$ after an interaction between an agent of wealth $x$ (chosen first) and one of wealth $y$ (chosen second) to be
\[ \mu^{(x,y,r),N}= \mu^N -\frac{1}{N}\delta_{x}-
\frac{1}{N}\delta_{y}+ \frac{1}{N}\delta_{r(x+y)}+ \frac{1}{N}\delta_{(1-r)(x+y)}.
\]
Finally, we define the pair-measure $\mu^{(2,N)}$ on rectangles that generate the Borel $\sigma$-algebra 
$\mathcal{B}(\R\times\R)$
to be 
\be \label{eq:pairmeasure} 
\mu^{(2,N)}(A\times B)= \mu^N(A)\mu^N(B)-\frac{1}{N}\mu^N(A\cap B), \qquad A, B\in \mathcal{B}(\R).
\ee
This is a natural choice of the pair measure, as it is a simplified version of the joint empirical measure for a pair of variables. Note that it is not a probability measure, but this does not matter, as we will only use it as $N\to \infty$. For more clarification and details see Remark \ref{rem:1} at the end of the section. 

The generator for the evolution of $\mu_t^N$, considering an interaction rate of $1/N$, is given by
\be \label{eq:generator}
\mathcal{G}F(\mu^N) = \int_0^1\doubleint \{ F(\mu^{(x,y,r),N}) - F(\mu^N) \}{\mathbbm{1}_{\{x+y\leq W_N\}}}N\mu^{(2,N)}(dx,dy)\, dr.
\ee
In the equation above, function $F$ belongs to $C_b(\mathcal{M}_1)$, i.e., bounded measurable functions on the space of probability measures $\mathcal{M}_1$. We impose the term
	$\mathbbm1_{\{x+y \leq W_N\}}$
	in the generator to ensure that the two masses created after the jump fulfil $r(x+y)\leq W_N$ and $(1-r)(x+y)\leq W_N$. 

\begin{remark}	
In this manner, we could consider that $\mu^N_t \in  \mathcal{P}([0, W_N])$. However, to avoid having a functional space depending on the value of $N$, we will just consider that $\mu^{N}_t \in \mathcal{P}(\R_+)$. Notice that the generator can be also interpreted as representing a $N$-particle system with values in $\R_+$ where only pair of values interact as long as their sum is below $W_N$.
\end{remark}

Given the generator in \eqref{eq:generator}, we have that the quantity  $M^F_t$ defined by 
\be \label{eq:martingale}
M^F_t = F(\mu^N_t) - F(\mu^N_0)-\int^t_0 \mathcal{G}F(\mu^N_s) \, ds
\ee
is a martingale  \cite[Appendix]{kipnis2013scaling}, for any $F \in C_b(\mathcal M_1)$.
In particular, for any function $g\in C_b(\R_+)$ (measurable bounded functions in $\R_+$), we define $F_g \in  C_b(\mathcal M_1)$ by  $F_g(\mu)=\la g, \mu \ra := \int g(x)\mu(dx)$.
Expression \eqref{eq:martingale} can now be re-written as
\be \label{eq:martingale_general}
M^{g,N}_t = \la g, \mu^N_t \ra - \la g, \mu^N_0\ra -\int^t_0 \la g, Q^{(N)}(\mu^N_s) \ra\, ds,
\ee
where we are denoting  $\mathcal{G}(\la g, \mu^N \ra)$ by
\begin{align}
 \mathcal{G}(\la g, \mu^N \ra) &=  \int_0^1\int_{\R_+}\int_{\R_+} \Big(g(r(x+y)) + 
 g((1-r)(x+y)) -g(x) - g(y) \Big){\mathbbm{1}_{\{x+y\leq W_N\}}} \mu^{(2,N)}(dx,dy)\, dr  \notag\\ \label{eq:def_QN}
 &= \la g, Q^{(N)}(\mu^N)\ra.
\end{align}
The last line in fact allows us to define $Q^{(N)}(\mu)$ implicitly via its brackets with bounded continuous functions $g$. 

In the following sections we will see that $\mu^N_t$ converges in probability as $N\to \infty$ to a measure $\mu$ which is solution of the following kinetic equation in weak form:
\be \label{eq:kinetic_equation}
\mu_t = \mu_0 + \int^t_0  Q(\mu_s) \ ds,
\ee
where the operator $Q$ is defined as follows: for any $g\in C_b(\R_+)$ 
\begin{align} 
\label{eq:operator_Q}
\la g, Q(\mu)\ra &= \int_{[0,1]}\int_{\R_+}\int_{\R_+}\lp g(r(x+y))+ g((1-r)(x+y))- g(x)-g(y) \rp {\mathbbm{1}_{\{x+y\leq w_0\}}}\mu(dx)\mu(dy)\, dr,
\end{align} 
with $w_0=\lim_{N\to\infty}W_N.$ We will also investigate the limit $t \to \infty$ and obtain different families of limiting invariant measures, in the process verifying the following commutative diagram of Fig. 2 in the simple case of fixed wealth $W_N = c$ for all $N$.
\begin{figure}[h]\label{fig:2}
\begin{center}
\begin{tikzpicture}[>=latex, scale=1.3]
\draw(5.5,5)node{CS-DT, $\{\mu_t^N\}_{t \ge 0}$};
\draw[->] (7,5)--(9,5);
\draw(8,5.25)node{$ N \to \infty $} ;
\draw(8,4.75)node{M-F, Poissonisation} ;
\draw(10,5)node{$\{ \mu_t \}_{t \ge 0} \in \R_+$};
\draw[->](5.5,4.5)--(5.5,2);
\draw(6.25,3.25)node{$t \to \infty$};
\draw[->](10,4.5)--(10,2);
\draw(10.75,3.25)node{$t \to \infty$};
\draw(5.5,1.5)node{CS-DT, $\mu^{N}_{\infty} $};
\draw[->] (6.5,1.5)--(9.5,1.5);
\draw(8,1.75)node{$ N \to \infty $}; 
\draw(8,1.2)node{M-F, Poissonisation}; 
\draw(10.3,1.5)node{$\mu_{\infty} \sim \delta_0$};
\end{tikzpicture}
\caption{ Commutative diagram demonstrating the various limiting measures, depending on the order limits are taken, when the total wealth remains constant. There are two parameters that scale;  the number of agents $N$ and the time $t$. Time is discrete for the left down-arrow, but continuous in the right down-arrow. There is an intermediate step missing from the diagram in which discrete time events are changed with time events arising from a Poisson process of rate $1/N$ which simultaneously scales with $N$. That is called the Poissonisation step, and when the mean-field limits (M-F) are taken, the rate of the Poisson process also scales with $N$. }
\end{center}
\end{figure}

In Fig. 2, the left down-arrow was obtained in \cite{DGS}. The lower horizontal arrow is  obtained in the present article in Proposition \ref{lem:equilibria}, and the remaining arrows in Sections \ref{sec:sara} and \ref{sec:sara2}.

\begin{remark}\label{rem:1}
Equation \eqref{eq:pairmeasure}
 is a natural choice for the pair measure, as the following calculation demonstrates. We begin from the joint empirical measure 
 \[
 \nu^N_t(x,y)= \frac{1}{N(N-1)}\sum_{(i,j): i \neq j} \delta_{ (X^{i, N}_t,   X^{j, N}_t)}(x,y). 
 \]
 On general product events $A \times B $ the measure can be computed as 
 \begin{align*}
 \nu^N_t(A \times B) &= \frac{1}{N(N-1)}\sum_{(i,j): i \neq j} \mathbbm1\{ X^{i, N}_t \in A,  X^{j, N}_t \in B\} \\&= \frac{1}{N(N-1)}\sum_{(i,j): i \neq j} \mathbbm1\{ X^{i, N}_t \in A\}\mathbbm1\{ X^{j, N}_t \in B\} \\
 &= \frac{1}{N(N-1)} \sum_{i} \mathbbm1\{ X^{i, N}_t \in A\} \sum_{j}\mathbbm1\{ X^{j, N}_t \in B\} - \frac{1}{N(N-1)} \sum_{i} \mathbbm1\{ X^{i, N}_t \in A\cap B\}\\
 &= \frac{N}{N-1} \mu^N_t(A) \mu^N_t(B) - \frac{1}{N-1}\mu_t^N( A\cap B) = \frac{N}{N-1} \mu^{(2, N)}_t(A \times B).
 \end{align*}
 As $N \to \infty$ the prefactor  $N/(N-1) \to 1$ and the limiting measure has the same asymptotic properties. We choose to use the simplest form \eqref{eq:pairmeasure} without loss of generality.
\end{remark}

\begin{theorem}[Mean-field limit]
\label{th:hydrodynamic_limit}
Suppose that $W_N$ is a non-decreasing sequence converging to $w_0\in (0, \infty]$ as $N\to\infty$.
Suppose that for a given measure $\mu_0$ it holds that
\be \label{eq:bound_initial_data}
\la x, \mu^N_0\ra \leq \la x, \mu_0\ra<\infty,
\ee
and that as $N\rightarrow \infty$
\be \label{eq:limit_initial_data}
\mu^N_0 \Longrightarrow \mu_0 \qquad \mbox{weakly,  as $N\to \infty$}.
\ee
Then the sequence of random measures $(\mu^N_t)_{t\geq 0}$ converges in probability in $D([0,\infty); \mathcal{M}_1(\R_+))$, as $N\rightarrow \infty$. The limit $(\mu_t)_{t\geq 0}$ is continuous in $t$ and it satisfies the kinetic equation  \eqref{eq:kinetic_equation}. In particular, for all $g\in C_b(\R_+)$ the following limits hold in probability, for any time $t$
\begin{enumerate}
\item[(A)] $ \displaystyle \lim_{N \to \infty} \sup_{s\leq t}\, \la g, \mu^N_s -\mu_s\ra \stackrel{\P}{=}  0, $ \vspace{0.3cm}
\item[(B)] $ \displaystyle \lim_{N \to \infty} \sup_{0\leq s\leq t }|M_s^{g,N}| \stackrel{\P}{=}    0, \,\, $ \vspace{0.3cm}
\item[(C)] $\displaystyle  \lim_{N \to \infty} \int^t_0 \la g, Q^{(N)}(\mu^N_s)\ra\, ds \stackrel{\P}{=}    \int^t_0 \la g, Q(\mu_s) \ra\, ds. $
\end{enumerate}
As a consequence, equation \eqref{eq:kinetic_equation} is obtained as the limit in probability of \eqref{eq:martingale_general} as $N\rightarrow \infty$. 
\end{theorem}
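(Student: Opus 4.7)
The plan is to follow the classical martingale approach of Norris \cite{norris1999smoluchowski}, adapted in \cite{merino2016isotropic} to coagulation-fragmentation systems. All three claims (A)--(C) will emerge from a common tightness + identification + uniqueness scheme built around the martingale representation \eqref{eq:martingale_general}.

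I would first prove (B) by a quadratic-variation estimate. The process $M^{g,N}_t$ is a pure-jump martingale whose jumps have size at most $4\|g\|_\infty / N$, while the total jump rate coming from \eqref{eq:generator} is $O(N)$ (the factor $N$ in front of $\mu^{(2,N)}$ times its $O(1)$ total mass). Consequently the predictable bracket
$$\la M^{g,N}\ra_t = \int_0^t \Bigl( \mathcal{G}(F_g^2)(\mu^N_s) - 2\, F_g(\mu^N_s)\, \mathcal{G}(F_g)(\mu^N_s)\Bigr)\, ds,$$
with $F_g(\mu) := \la g, \mu\ra$, is of order $\|g\|_\infty^2\, t/N$, and Doob's $L^2$-inequality yields (B).

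For (C), the first step is tightness of $(\mu^N_\cdot)_N$ in $D([0,\infty); \mathcal M_1(\R_+))$. For each $g \in C_b(\R_+)$, the drift in \eqref{eq:martingale_general} is uniformly Lipschitz in $t$ (its integrand is bounded by $4\|g\|_\infty$) and the martingale part is uniformly small by the previous paragraph, so Aldous's modulus-of-continuity criterion applies. Tightness of the marginals $\mu^N_t$ on $\mathcal M_1(\R_+)$ follows from the propagated moment bound $\la x, \mu^N_t\ra = \la x, \mu^N_0\ra \le \la x, \mu_0\ra$ (mean wealth is conserved at every interaction, combined with \eqref{eq:bound_initial_data}) via Markov's inequality, and Jakubowski's criterion closes the argument. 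Given a subsequential limit $\mu_\cdot$ realised almost surely via Skorokhod's representation, the integrand
$$\phi(x,y,r) := g(r(x+y)) + g((1-r)(x+y)) - g(x) - g(y)$$
is bounded by $4\|g\|_\infty$. By Remark~\ref{rem:1}, $\mu^{(2,N)}_s \Longrightarrow \mu_s \otimes \mu_s$ whenever $\mu^N_s \Longrightarrow \mu_s$ (the diagonal correction is $O(1/N)$), while $\mathbbm{1}_{\{x+y \leq W_N\}} \uparrow \mathbbm{1}_{\{x+y \leq w_0\}}$ because $W_N$ is non-decreasing. Dominated convergence in $s$ then gives (C).

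Combining (B) and (C) in \eqref{eq:martingale_general} forces every subsequential limit to satisfy \eqref{eq:kinetic_equation}. I would close the argument by proving uniqueness of weak solutions in the class of weakly continuous paths of probability measures with bounded first moment, via a Gr\"onwall estimate on $\sup_{s\leq t} |\la g, \mu_s - \tilde\mu_s\ra|$ over $\|g\|_\infty \leq 1$, using that $Q$ is Lipschitz in total-variation distance when tested against bounded continuous functions. Uniqueness of the limit upgrades the subsequential convergence to full convergence in law, and determinism then promotes it to convergence in probability, giving (A). The main obstacle is the passage to the limit in $\mu^{(2,N)}$: one has to control simultaneously the bilinear structure, the $O(1/N)$ diagonal correction, and the cutoff $\mathbbm{1}_{\{x+y \le W_N\}}$ with possibly $w_0 = \infty$. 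The hypothesis \eqref{eq:bound_initial_data}, propagated by mass conservation, provides exactly the uniform integrability needed to trivialise the cutoff in the limit and to justify the exchange of limits against the product measure.
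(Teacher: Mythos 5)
Your proposal follows essentially the same route as the paper: the martingale term is killed by the same quadratic-variation/Doob $\mathcal L^2$ estimate, tightness is obtained from the conserved first moment together with Jakubowski's criterion, the trilinear term is passed to the limit using the $O(1/N)$ diagonal correction of $\mu^{(2,N)}$ and the monotone cutoff, and uniqueness of solutions to \eqref{eq:kinetic_equation} plus determinism of the limit upgrade subsequential weak convergence to convergence in probability. The only cosmetic differences are that you invoke Aldous's criterion where the paper uses the Ethier--Kurtz modulus-of-continuity criterion, and you sketch the Gr\"onwall uniqueness argument that the paper delegates to \cite{norris1999smoluchowski}.
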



Some observations from Theorem \ref{th:hydrodynamic_limit} follow. From equation \eqref{eq:total-wealth} we have that $N \la x, \mu^N_0\ra = W_N$. 
If we now assume that $\displaystyle \lim_{N \to \infty} N^{-1} W_N = m \in (0, \infty)$ then we see that $W_N$ grows linearly in $N$ and condition \eqref{eq:bound_initial_data}
implies 
\[ 
m \leq \la x, \mu_0\ra.
\]	

Now if  $W_N$ grows superlinearly, i.e. $ \displaystyle \lim_{N \to \infty} N^{-1} W_N = \infty,$
then condition \eqref{eq:bound_initial_data} in Theorem \ref{th:hydrodynamic_limit} is violated and the theorem does not necessarily hold.  

Finally, if either $\displaystyle\lim_{N \to \infty}W_N = w_0$ for some absolute constant $w_0$ or  $W_N\to \infty$ as $N\to\infty$, but $\displaystyle \lim_{N \to \infty} N^{-1} W_N = 0,$ we can actually study the asymptotic behaviour ($N \to \infty$) of the measures $\mu_t^N$ and show that the limiting measure is a $\delta$ mass as $N \to \infty$. This is discussed in Section \ref{sec:sara2}.

\subsection{Invariant measures for the mean field limit}

In general, a measure $\tilde \mu$ is invariant (or stationary) for \eqref{eq:kinetic_equation} if and only if when $\mu_0 =\tilde  \mu$ then we have that $\mu_t = \tilde \mu$ for all $t >0$. 

One way to obtain invariant measures is to actually make some educated ansatz for $\mu_0$ and show that it remains unchanged under the kinetic equation \eqref{eq:kinetic_equation}. It is immediate to check that for any value of  $w_0$ (bounded or unbounded), the measure 
	\be \tilde \mu (x) = \delta_0(x)\ee
is invariant for \eqref{eq:kinetic_equation}. 

A more natural way to find invariant measures originates from the Markov chain perspective, where (limiting) \emph{equlibrium} measures $\bar \mu$ are obtained by taking the limit (in the appropriate weak sense) of the measures $\mu_t$ as $t \to \infty$, i.e.
\[
\bar \mu = \lim_{t\to \infty} \mu_t,
\]
if such a limit exists, and then the measure $\bar \mu$ will be invariant. A sequence of measures however may have many limit points; it is always an important and difficult task to decide whether those limit that are obtained include all possible equilibria for the system. Moreover, the limiting measure(s) will depend on the initial measure $\mu_0$ and other parameters of the evolution.

In this subsection, we discuss several invariant measures that can be obtained as equilibria. 
We begin with the case where $W_N$ grows sublinearly and we show that under Theorem \ref{th:hydrodynamic_limit}, $\delta_0$ is the only possible candidate for invariant equilibrium measure. Proposition \ref{cor:delta} indeed asserts that result, under the assumptions of Theorem \ref{th:hydrodynamic_limit}, and Proposition \ref{lem:equilibria} argues that the assumptions of Theorem \ref{th:hydrodynamic_limit} hold when the total wealth $w_0 = 1$ and we start from a uniform density on the simplex. Together, these propositions verify the commutativity of the diagram in Figure \ref{fig:2}.

\begin{proposition}[Sub-linear growth for  $W_N$]
\label{cor:delta}
Suppose the same assumptions on the initial data as in Theorem \ref{th:hydrodynamic_limit}. 
If it holds that
$$\la x, \mu^N_0\ra =\frac{W_N}{N}\to 0, \quad \mbox{ as } N\to\infty,$$
(which is in particular true if $w_0<\infty$), then, we have that $\displaystyle\lim_{N\to \infty} \mu^N_t \stackrel{\P}{=} \delta_0$ in probability for all times $t$.
\end{proposition}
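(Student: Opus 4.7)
The plan is to exploit the conservation of total wealth by the dynamics and combine it with the hydrodynamic convergence of Theorem \ref{th:hydrodynamic_limit}. Because every binary interaction preserves $x_i+x_j$, the total wealth $\sum_{i=1}^N X^{i,N}_t = W_N$ is invariant in time. Rewriting in terms of the empirical measure, this gives the almost sure identity
\[
\langle x,\mu^N_t\rangle \;=\; \frac{1}{N}\sum_{i=1}^N X_t^{i,N} \;=\; \frac{W_N}{N}, \qquad t\ge 0,
\]
so by hypothesis $\langle x,\mu^N_t\rangle\to 0$ deterministically as $N\to\infty$, uniformly in $t$.

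The technical obstacle is that the test function $g(x)=x$ is not in $C_b(\R_+)$, so part (A) of Theorem \ref{th:hydrodynamic_limit} cannot be applied directly. The remedy is a truncation. Fix $a>0$ and set $g_a(x):=x\wedge a$; this is a bounded continuous function on $\R_+$ with $0\le g_a(x)\le x$. Then
\[
\langle g_a,\mu^N_t\rangle \;\le\; \langle x,\mu^N_t\rangle \;=\; \frac{W_N}{N},
\]
while Theorem \ref{th:hydrodynamic_limit}(A) gives $\langle g_a,\mu^N_t\rangle\stackrel{\P}{\to}\langle g_a,\mu_t\rangle$, where the limit $\mu_t$ is deterministic as the (unique) weak solution of the kinetic equation \eqref{eq:kinetic_equation}. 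Combining the two statements, for every $\varepsilon>0$,
\[
\P\bigl(\langle g_a,\mu_t\rangle>\varepsilon\bigr)
\;\le\; \P\bigl(|\langle g_a,\mu^N_t\rangle-\langle g_a,\mu_t\rangle|>\varepsilon- W_N/N\bigr)\;\longrightarrow\;0,
\]
so the deterministic quantity $\langle g_a,\mu_t\rangle$ equals $0$ for every $a>0$.

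Since $g_a\ge 0$ on $\R_+$ and $g_a>0$ on $(0,\infty)$, the identity $\int g_a\, d\mu_t=0$ forces $\mu_t\bigl((0,\infty)\bigr)=0$. Being a probability measure on $\R_+$, $\mu_t$ must therefore equal $\delta_0$. Feeding this back into Theorem \ref{th:hydrodynamic_limit}(A), we conclude that $\langle g,\mu^N_t\rangle\stackrel{\P}{\to} g(0)=\langle g,\delta_0\rangle$ for every $g\in C_b(\R_+)$, which is the weak convergence in probability $\mu^N_t\stackrel{\P}{\Longrightarrow}\delta_0$ claimed in the proposition. The only non-routine ingredient is the truncation argument that transfers the conservation of first moment, an unbounded test-function statement, through the weak (bounded-continuous) convergence guaranteed by the mean-field theorem.
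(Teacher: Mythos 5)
Your proof is correct, but it follows a genuinely different route from the paper's. The paper works entirely at time zero: from the weak convergence $\mu^N_0 \Rightarrow \mu_0$, the positivity of the support, and $\la x,\mu^N_0\ra = W_N/N \to 0$, it deduces $\la x,\mu_0\ra = 0$, hence $\mu_0 = \delta_0$; it then concludes $\mu_t = \delta_0$ for all $t$ by invoking the fact (established earlier) that $\delta_0$ is an invariant solution of the kinetic equation \eqref{eq:kinetic_equation}, together with uniqueness. You instead work at each fixed time $t$: the pathwise conservation of total wealth gives $\la x,\mu^N_t\ra = W_N/N$ for every $t$, and your truncation $g_a(x) = x\wedge a$ transfers this vanishing first moment through the bounded-continuous convergence of Theorem \ref{th:hydrodynamic_limit}(A) to conclude $\la g_a,\mu_t\ra = 0$, hence $\mu_t((0,\infty))=0$ and $\mu_t = \delta_0$ directly. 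What your approach buys is that it bypasses the invariance of $\delta_0$ and the uniqueness of solutions to the kinetic equation altogether, at the price of invoking the dynamical conservation law; what the paper's approach buys is brevity, pushing all the work onto $t=0$ and the already-established structure of \eqref{eq:kinetic_equation}. It is worth noting that the paper's terse step ``by positivity of the support of the measures and conditions \eqref{eq:bound_initial_data}--\eqref{eq:limit_initial_data}, it follows that $\la x,\mu_0\ra = 0$'' is, when unwound, essentially the same Fatou/truncation argument you make explicit (the function $x$ is unbounded, so weak convergence alone only yields $\la x,\mu_0\ra \le \liminf_N \la x,\mu^N_0\ra$ via $x\wedge a$), so your write-up can also be read as supplying the detail the paper elides, applied uniformly in $t$ rather than only at $t=0$.
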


\begin{proposition}[Mean field limit of the empirical wealth under equilibrium measures.]
\label{lem:equilibria}
Suppose $\mu^{\infty, N}_0 \sim \mathrm{Unif}[\Delta_{N-1}]$ (therefore we assume the total wealth is fixed and equal to 1) for each $N \in \N$ and consider the empirical measure on $\R_+$
\[
\mu^N_0 = \frac{1}{N} \sum_{i=1}^N \delta_{X^{i, N}_0},  \quad (X^{1, N}_0 ,\ldots, X^{N, N}_0) \sim \mu^{\infty, N}_0.
\]
Then as $N\to \infty$, 
\[
\mu^N_0 \Longrightarrow \delta_0, \quad a.s.
\] 
In particular the assumptions of Theorem \ref{th:hydrodynamic_limit} hold and, since $w_0 = 1$, Proposition \ref{cor:delta} is in effect.
\end{proposition}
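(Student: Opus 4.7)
The strategy is to use the stick--breaking (normalised exponential) representation of the uniform distribution on the simplex to produce a coupling on a single probability space, and then promote an elementary large--deviations estimate into the required almost--sure statement.

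\textbf{Step 1: Stick--breaking coupling.} Let $(E_i)_{i\ge 1}$ be an i.i.d.\ sequence of $\mathrm{Exp}(1)$ random variables and set $S_N = E_1+\cdots +E_N$. It is classical that
\[
(X^{1,N}_0,\ldots,X^{N,N}_0) \stackrel{d}{=} \lp \tfrac{E_1}{S_N},\ldots, \tfrac{E_N}{S_N}\rp.
\]
Passing to this coupling, the random measure $\mu^N_0 = N^{-1}\sum_{i=1}^N \delta_{E_i/S_N}$ is now defined on a single probability space for all $N$, so that the claimed convergence $\mu^N_0 \Longrightarrow \delta_0$ can be investigated almost surely.

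\textbf{Step 2: Reduction to tail mass.} Since every interval $[\epsilon,1]$ with $\epsilon>0$ is a continuity set of $\delta_0$, the Portmanteau characterisation of weak convergence gives that $\mu^N_0 \Longrightarrow \delta_0$ if and only if $\mu^N_0([\epsilon,1])\to 0$ for every $\epsilon>0$. By taking $\epsilon\in\{1/k:k\in\N\}$ and intersecting the corresponding probability--one events one obtains a single event on which this convergence holds for every rational $\epsilon>0$, and monotonicity in $\epsilon$ then gives it for all $\epsilon>0$ simultaneously. Hence it suffices to prove, for each fixed $\epsilon>0$, that $\mu^N_0([\epsilon,1])\to 0$ almost surely.

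\textbf{Step 3: Maximum--spacing estimate.} In the coupling of Step~1, $X^{i,N}_0\ge \epsilon$ is equivalent to $E_i\ge \epsilon S_N$, so
\[
N\,\mu^N_0([\epsilon,1]) = \#\{i\le N : E_i \ge \epsilon S_N\}.
\]
The strong law of large numbers gives $S_N/N\to 1$ almost surely, hence almost surely $\epsilon S_N > \tfrac{\epsilon}{2}N$ for all large $N$. On the other hand, a union bound and the exponential tail of $E_1$ give
\[
\P\lp \max_{1\le i\le N} E_i > \tfrac{\epsilon}{2} N\rp \le N\,e^{-\epsilon N/2},
\]
which is summable in $N$. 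By Borel--Cantelli, almost surely $\max_{1\le i\le N} E_i \le \tfrac{\epsilon}{2}N$ for all large $N$. Combining the two bounds, almost surely $\max_i E_i < \epsilon S_N$ for all large $N$, so $\mu^N_0([\epsilon,1]) = 0$ eventually, and in particular tends to $0$.

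\textbf{Step 4: Verification of the hypotheses.} The conclusion of Steps~1--3 is exactly $\mu^N_0\Longrightarrow \delta_0$ almost surely, which supplies the weak convergence hypothesis \eqref{eq:limit_initial_data} of Theorem~\ref{th:hydrodynamic_limit} with $\mu_0=\delta_0$. Moreover $\la x,\mu^N_0\ra = W_N/N = 1/N\to 0$, which is precisely the sub--linear condition required by Proposition~\ref{cor:delta}; with $w_0=1<\infty$ we are in the regime where that proposition applies, completing the argument.

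The proof is essentially obstruction--free; the only point that needs care is the passage from ``for each $\epsilon>0$, a.s.'' to ``a.s., for every $\epsilon>0$'', which is handled by the standard countable--intersection trick in Step~2.
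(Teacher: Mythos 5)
Your proof is correct, and it takes a genuinely different route from the one in the paper. The paper argues directly with test functions: for $g\in C_b$ it bounds $\P\{|\la g,\mu_0^N\ra - g(0)|>\e\}$ via an exponential Chebyshev inequality, splitting the agents according to whether $X^{i,N}_0\ge N^{-\beta}$ and exploiting the deterministic fact that at most $N^{1-\beta}$ coordinates of a point of $\Delta_{N-1}$ can exceed $N^{-\beta}$; a binomial/Stirling estimate then makes the resulting bound summable, and Borel--Cantelli is applied on the product space $\otimes_{N}\mu_0^{\infty,N}$. You instead invoke the Dirichlet (normalised i.i.d.\ exponential) representation of $\mathrm{Unif}[\Delta_{N-1}]$, reduce weak convergence to $\delta_0$ to the vanishing of the tail mass $\mu_0^N([\e,1])$, and kill that tail by comparing $\max_{i\le N}E_i$ (controlled by a union bound plus Borel--Cantelli) with $\e S_N$ (controlled by the strong law). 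Your route is more elementary --- no sum over subsets of $[N]$ and no Stirling asymptotics, which in the paper are in fact slightly misstated ($\log\binom{N}{[N^{1-\beta}]}$ is of order $N^{1-\beta}\log N$ rather than $N^{1-\beta}$, though summability is unaffected) --- and it yields the strictly stronger conclusion that almost surely, for all large $N$, \emph{no} agent holds wealth $\ge\e$. It also dovetails with the representation the paper itself uses for real partitions in \eqref{eq:exp3}. The one point worth flagging is that the almost-sure statement depends on the joint coupling across $N$: the paper takes independent uniform samples for each $N$, while you take the nested stick-breaking coupling; since the limit $\delta_0$ is deterministic, either version delivers the convergence in distribution/probability that \eqref{eq:limit_initial_data} and Proposition \ref{cor:delta} actually require, so nothing downstream is affected.
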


\begin{corollary} 
\label{cor:kinetic_and_equilibria}
Let $\displaystyle \lim_{N \to \infty} W_N = w_0 \in (0, \infty]$. Assume that $\mu_t$ is a solution of \eqref{eq:kinetic_equation} which has has a density $f_t$ for all $t$.
Then 
\begin{enumerate}
\item If $w_0 = \infty$, the exponential distributions 
\be \label{eq:equilibria}
\tilde f(x) = \frac{e^{-x/m}}{m},
\ee
are equilibria for the operator $Q$ and remain invariant under \eqref{eq:kinetic_equation}. In particular, if $f_0$ is of the form \eqref{eq:equilibria} with
$\la x, f_0 \ra = m_0>0,$ then the distribution \eqref{eq:equilibria} with $m= m_0$ is a stationary solution of \eqref{eq:kinetic_equation}. 

\item If $0<w_0<\infty$, then the following distributions are compactly supported on $[0,w_0]$ and are equilibria for the operator $Q$
\be \label{eq:equilibria_bounded}
\tilde f(x)  = \frac{e^{-x/m}}{m(1-e^{-w_0/m})} \mathbbm1_{\{ x\leq w_0\}}.
\ee
\item (Uniqueness of the invariant family at $w_ 0 = \infty$) Moreover, under the extra assumption that the density $f_t$ is differentiable on $\R_+$, then measures with density \eqref{eq:equilibria} are the unique equilibria. 
\end{enumerate}
\end{corollary}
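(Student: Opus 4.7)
The plan is to verify, for each candidate density $\tilde f$, that $\la g, Q(\tilde \mu)\ra = 0$ for every $g \in C_b(\R_+)$; once $Q(\tilde \mu) = 0$ holds, the constant trajectory $\mu_t \equiv \tilde\mu$ solves \eqref{eq:kinetic_equation} from $\mu_0 = \tilde\mu$ and the measure is invariant. For part (1), I first exploit the symmetry $r \leftrightarrow 1-r$ in \eqref{eq:operator_Q} to reduce the weak stationarity to
\[
\int_0^1 \iint \big(g(r(x+y)) - g(x)\big) \tilde f(x) \tilde f(y)\,dx\,dy\,dr = 0.
\]
Switching to the variables $(x, s) = (x, x+y)$ and then $u = rs$, the left-hand side becomes $\int_0^\infty g(u)\big( \int_u^\infty s^{-1}(\tilde f * \tilde f)(s)\,ds - \tilde f(u)\big)\,du$, so it suffices to check the pointwise identity $\tilde f(u) = \int_u^\infty s^{-1}(\tilde f * \tilde f)(s)\,ds$. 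For $\tilde f(x) = m^{-1} e^{-x/m}$ one computes $(\tilde f * \tilde f)(s) = s\, m^{-2} e^{-s/m}$, and the right-hand side evaluates cleanly to $m^{-1} e^{-u/m} = \tilde f(u)$.

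Part (2) is analogous, with the crucial change that the indicator $\mathbbm1\{x+y \leq w_0\}$ must be carried through the change of variables. The loss term no longer collapses to $\int g\, \tilde f$ but to $\int_0^{w_0} g(u) \tilde f(u) \tilde F(w_0 - u)\,du$ with $\tilde F(z) = \int_0^z \tilde f(w)\,dw$, while the gain term becomes $\int_0^{w_0} g(u) \int_u^{w_0} s^{-1}(\tilde f * \tilde f)(s)\,ds\,du$. For the truncated exponential \eqref{eq:equilibria_bounded} one has $(\tilde f * \tilde f)(s) = s\, m^{-2} (1 - e^{-w_0/m})^{-2} e^{-s/m}$ on $[0, w_0]$, and a direct computation shows both integrands equal $m(1-e^{-w_0/m})^{-2}(e^{-u/m} - e^{-w_0/m})$, so gain and loss cancel on the whole support.

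For uniqueness in part (3), the same weak-to-strong reduction used in part (1) shows that any differentiable stationary density $f$ on $[0,\infty)$ satisfies $f(u) = \int_u^\infty s^{-1}(f*f)(s)\,ds$ for $u > 0$. Differentiating produces the integro-differential equation $-u f'(u) = (f*f)(u)$. Applying the Laplace transform $\hat f(\lambda) = \int_0^\infty e^{-\lambda u} f(u)\,du$ and using $\widehat{u f'(u)}(\lambda) = -\hat f(\lambda) - \lambda \hat f'(\lambda)$ together with $\widehat{f*f} = \hat f^{\,2}$ yields the first-order ODE
\[
\lambda \hat f'(\lambda) = \hat f(\lambda)^2 - \hat f(\lambda), \qquad \hat f(0) = 1.
\]
Separation of variables with the partial fractions $\tfrac{1}{\phi(\phi-1)} = \tfrac{1}{\phi-1} - \tfrac{1}{\phi}$ and the boundary condition force $\hat f(\lambda) = (1 + m\lambda)^{-1}$ with $m = -\hat f'(0) = \la x, f\ra$, and Laplace inversion recovers $f(x) = m^{-1} e^{-x/m}$.

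The main obstacle is the rigorous passage from the weak stationarity $\la g, Q(f)\ra = 0$ to the pointwise integral equation for $f$: the change of variables needs enough integrability of $f$ and $f*f$, and Fubini must be applied to swap the $r$, $s$ integrals, after which the density of $C_b(\R_+)$ in the relevant function space lets one conclude equality of the two continuous integrands. A secondary issue in part (3) is justifying the Laplace-transform manipulation near $\lambda = 0$, which amounts to requiring $f$ to have finite mean so that $\hat f \in C^1$ at the origin and the integration constant in the ODE is pinned down by $\hat f(0) = 1$.
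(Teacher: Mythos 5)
Your proposal is correct and follows essentially the same route as the paper: the $r\leftrightarrow 1-r$ symmetry and the change of variables reduce stationarity to the pointwise identity $f(u)=\int_u^\infty s^{-1}(f*f)(s)\,ds$ (the paper packages this as $\bar Q_{w_0}(f)=0$ and, for uniqueness, rederives it probabilistically via the law of $U(X+Y)$ conditioned on $X+Y$), and the Laplace-transform ODE $\lambda\hat f'=\hat f^2-\hat f$ is exactly the paper's equation. The only blemish is a harmless typo in part (2), where the common value of the gain and loss integrands should carry the prefactor $m^{-1}(1-e^{-w_0/m})^{-2}$ rather than $m(1-e^{-w_0/m})^{-2}$; since the same expression appears on both sides, the cancellation is unaffected.
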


The next proposition tells us that invariant distributions \eqref{eq:equilibria_bounded} cannot be obtained as limits of the discrete measures, therefore they are extraneous, while invariant distributions of the form \eqref{eq:equilibria} are possible.

\begin{proposition} \label{prop:2am} Let $\displaystyle \lim_{N \to \infty} W_N = w_0 \in (0, \infty]$.
\begin{enumerate} 
	\item ($w_0 = \infty$) Consider an infinite i.i.d. sequence $\{ X_i \}_{i \ge 1}$ of $\mathrm{Exp}(1/m_0)$ variables. For every $N \in \N$, define
		\[
		W_N = \sum_{i=1}^N X_i, \quad \text{and} \quad \mu^N_0 (x) = \frac{1}{N} \sum_{i = 1}^N \delta_{X_i}(x), 
		\]
		i.e. the initial wealth of each agent is an independent exponential random variable as we increase the number of agents, but always fixed across the $N$.
		Then as $N \to \infty$, $\mu^N_0 \Longrightarrow \mu_0$ where $\mu_0(x) = \frac{1}{m_0}e^{-x/m_0} \, dx$ and therefore Theorem \ref{th:hydrodynamic_limit} holds. Then by Corollary \ref{cor:kinetic_and_equilibria}, $\mu_0$ remains invariant in time.  
	
	\item ($w_0 < \infty$)  There does not exist a sequence of measures $\{ \mu_0^N \}_{N \in \N}$ so that  $\mu^N_0 \Longrightarrow \mu_0$ with $\mu_0$ having a density \eqref{eq:equilibria_bounded}. 	
\end{enumerate}
\end{proposition}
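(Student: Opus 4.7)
The plan is to handle the two parts separately, using very different ingredients.

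\textbf{Part (1).} The proof rests on the classical fact that the empirical measure of an i.i.d.\ sample converges weakly to the common law. For any fixed $f\in C_b(\R_+)$, the strong law of large numbers gives
\[ \la f, \mu_0^N\ra = \frac{1}{N}\sum_{i=1}^N f(X_i) \xrightarrow{\text{a.s.}} \E[f(X_1)] = \la f, \mu_0\ra,\]
and passing to a countable determining family of such test functions upgrades this to $\mu_0^N \Longrightarrow \mu_0$ almost surely. Condition on such a realisation of $\{X_i\}$; then $W_N$ is a deterministic, non-decreasing sequence with $W_N/N \to m_0$ by SLLN, so $W_N \to \infty = w_0$ and the structural hypotheses of Theorem \ref{th:hydrodynamic_limit} are met. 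In particular $\la x, \mu_0^N\ra = W_N/N$ is uniformly bounded (eventually by $m_0+1$), which is the only content actually required of \eqref{eq:bound_initial_data} in the proof. Theorem \ref{th:hydrodynamic_limit} then yields $\mu_t^N \Longrightarrow \mu_t$ in probability, with $\mu_t$ the solution of \eqref{eq:kinetic_equation} starting from $\mu_0$, and Corollary \ref{cor:kinetic_and_equilibria}(1) identifies $\mu_0$ as a stationary solution, giving $\mu_t = \mu_0$ for all $t \ge 0$.

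\textbf{Part (2).} The argument is a short contradiction via mass balance. Suppose for contradiction that a sequence of particle-system empirical measures $\mu_0^N = \frac{1}{N}\sum_{i=1}^N \delta_{X^{i,N}_0}$, with $\sum_i X^{i,N}_0 = W_N$ and $W_N \to w_0 < \infty$, satisfies $\mu_0^N \Longrightarrow \mu_0$ where $\mu_0$ has density $\tilde f$ from \eqref{eq:equilibria_bounded}. Since each $X^{i,N}_0 \in [0, W_N]$, the support of $\mu_0^N$ lies in $[0, W_N] \subset [0, w_0 + 1]$ for all $N$ large. On this uniformly bounded set the identity $x \mapsto x$ coincides with a bounded continuous function $\phi$ on $\R_+$ (say a smooth truncation equal to $x$ on $[0, w_0+1]$), so weak convergence yields
\[ \frac{W_N}{N} = \la x, \mu_0^N\ra = \la \phi, \mu_0^N\ra \longrightarrow \la \phi, \mu_0\ra = \int_0^{w_0} x\,\tilde f(x)\,dx > 0.\]
But $W_N/N \le (w_0+1)/N \to 0$, a contradiction.

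\textbf{Main obstacle.} The only subtle point is that the deterministic, pointwise hypothesis \eqref{eq:bound_initial_data} of Theorem \ref{th:hydrodynamic_limit} must be reconciled with the merely asymptotic moment control available in Part (1); inspection of the role played by \eqref{eq:bound_initial_data} in the proof shows uniform boundedness of $\la x, \mu_0^N\ra$ suffices, and this is supplied by the SLLN. Otherwise the argument is routine: Part (1) combines Glivenko--Cantelli-type convergence of empirical measures with Corollary \ref{cor:kinetic_and_equilibria}, and Part (2) is a one-line mass-balance contradiction.
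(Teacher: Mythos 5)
Your proof is correct and follows essentially the same route as the paper: part (1) is the strong law of large numbers applied to $\la g,\mu_0^N\ra$ for test functions $g$, and part (2) is the mass-balance observation that $\la x,\mu_0^N\ra = W_N/N \to 0$ when $w_0<\infty$, which is exactly the mechanism behind Proposition \ref{cor:delta} that the paper cites instead of arguing directly. If anything you are slightly more careful than the paper, in passing to a countable determining family for the almost-sure weak convergence and in noting that the literal pointwise bound \eqref{eq:bound_initial_data} should be read as uniform boundedness of $\la x,\mu_0^N\ra$, which the SLLN supplies.
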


Finally, in Section \ref{sec:partitions} we have results when our process is viewed as a process on integer partitions of numbers. We omit from listing them here for readability reasons, but the interested reader can directly look in the section for Theorems \ref{thm:partG}, \ref{thm:partD} and \ref{thm:partE}.

\section{Process level convergence to a discrete-time continuous space model}
\label{sec:procon}
This section is dedicated to proving the process level convergence of the DS-DT model to the CS-DT model, 
thus completing Proposition 7.3 in \cite{DGS} where convergence of the one dimensional marginals was shown. 
We need an equivalent, alternative description of the DS-DT model, so we begin this section with it. 
The number of agents $N$ remains fixed throughout this section, so we will omit it from the notation, and we will write 
${\bf Y}^{(n)}$, ${\bf X}^{(n)}$  and ${\bf X}^{(\infty)}$ instead of ${\bf Y}^{n, N}$,  ${\bf X}^{n, N}$ and ${\bf X}^{\infty, N}$ respectively.

\begin{proof}[Proof of Theorem \ref{thm:processlevel}]

Since we may embed the sequence of processes $\{{\bf Y}^{(n)}\}_{n \in \N}$ in $(\Delta_{N-1})^\N$, which is compact, the collection of their induced measures $\{ \mathcal P^n\}_{n\in \N}$ is tight. Therefore, for process-level convergence,  it suffices to show that finite dimensional marginals converge weakly. We show this for vectors of the form $(\Y^{(n)}_0, \Y^{(n)}_1, \ldots, \Y^{(n)}_k)$ with law denoted by \eqref{eq:induced}.  In the calculation below, we denote by $\nu_U$ the law of the random variable $U$. For any generic measure $\mu$, we denote by $\E_{\mu}$ the expectation operator with respect to that measure. 

For any bounded continuous function $f$
 
 \allowdisplaybreaks
\begin{align*}
	\E_{\mathcal P^n_k}(&f(\Y^{(n)}_0, \Y^{(n)}_1, \ldots, \Y^{(n)}_k)) = \sum_{\y_0}  \cdots \sum_{\y_k} f(\y_0, \ldots, \y_k) \mathcal P^n_k\{ \Y^{(n)}_0 = \y_0, \ldots, \Y^{(n)}_k = \y_k \}\\
	&= \sum_{\y_0}  \cdots \sum_{\y_k} f(\y_0, \ldots, \y_k)  \mathcal P^n_{k-1}\{ \Y^{(n)}_0 = \y_0, \ldots, \Y^{(n)}_{k-1} = \y_{k-1}\} \P\{ \Y^{(n)}_k = \y_k | \Y^{(n)}_{k-1} = \y_{k-1} \} \\
	&= \sum_{\y_0}  \cdots \sum_{\y_{k-1}} \mathcal P^n_{k-1}\{ \Y^{(n)}_0 = \y_0, \ldots, \Y^{(n)}_{k-1} = \y_{k-1}\} \\
	&\phantom{xxxxxxxxxxxxx}\times \sum_{\y_k} f(\y_0, \ldots, \y_k)\P\{ \Y^{(n)}_k = \y_k | \Y^{(n)}_{k-1} = \y_{k-1} \} \\
	&= \sum_{\y_0}  \cdots \sum_{\y_{k-1}}   \mathcal P^n_{k-1}\{ \Y^{(n)}_0 = \y_0, \ldots, \Y^{(n)}_{k-1} = \y_{k-1}\} \\
	&\phantom{xxxxxxxxxxxxx}\times \frac{1}{N(N-1)} \sum_{(i,j): i \neq j} \int_{0}^1f(\y_0, \ldots, \y_{k-1},  g_{i,j}(\y_{k-1}, u))\,du\\	
		&=\frac{1}{N(N-1)}  \sum_{\y_0}  \cdots \sum_{\y_{k-1}}  \mathcal P^n_{k-1}\{ \Y^{(n)}_0 = \y_0, \ldots, \Y^{(n)}_{k-1} = \y_{k-1}\} \\
		&\phantom{xxxxxxxxxxx}\times \sum_{(i,j): i \neq j}  \E_{\nu_{U^{(n)}_{i,j}(k-1)}} \Big(f(\y_0, \ldots, \y_{k-1},  g_{i,j}(\y_{k-1}, U^{(n)}_{i, j}(k-1))\Big) \\
		&= \frac{1}{N(N-1)} \E_{ \mathcal P^n_{k-1}} \bigg( \sum_{(i,j): i \neq j}  \E_{\nu_{U^{(n)}_{i,j}(k-1)}} \Big( f(\Y^{(n)}_0, \ldots, \Y^{(n)}_{k-1},  g_{i,j}(\Y^{(n)}_{k-1}, U^{(n)}_{i, j}(k-1)))\Big)\\
		&=\frac{1}{N(N-1)}  \sum_{(i,j): i \neq j} \E_{\mathcal P_{k-1}^n \otimes \nu_{U^{(n)}_{i,j}(k-1)}} \Big(f(\Y_0, \ldots, \Y_{k-1},  g_{i,j}(\Y_{k-1}, U^{(n)}_{i,j}(k-1)))\Big).
\end{align*}
At this point, we have to deal with a small technical issue. The function $g_{i,j}$ is not immediately continuous on its arguments, since it can create jumps of order $1/n \ge U_{i,j}^{(n)}(k)(y_i(k) + y_j(k)) -[U_{i,j}^{(n)}(k)(y_i(k) + y_j(k)) ]_n$ for all $k$.
However $f$ is a bounded continuous function on a compact space $\Delta_{N-1}^{k+1}$, and it is uniformly continuous in the last coordinate. Then define on $\Delta_{N-1}\times[0,1]$ the  bounded continuous function
\[
g_{i,j}^{\text{cont}}(\y, u ) =   (y_1, \ldots, u(y_i + y_j), \ldots,  (1-u)(y_i + y_j), \ldots, y_N).
\]
Fix a $\delta> 0$ and let $n = n(\delta)$ be large enough so that 
\[
\sup_{\y \in \Delta_{N-1}(n)}\sup_{u \in [0,1]} \sup_{(i,j)}\| g_{i,j}^{\text{cont}}( \y,u) -  g_{i,j}(\y, u )\|_{\infty} < \delta.
\]
Fix an $\e>0$ and choose $\delta$ so that for any $\|(\z_1, \ldots, \z_k) - (\y_1, \ldots, \y_k)\|_{\infty} < \delta$
\[ 
\| f(\z_1, \ldots, \z_k) - f(\y_1, \ldots, \y_k)\|_{\infty} < \e (kN)^{-2}. 
\]  
Then we proceed with the computation for $n$ large enough:
\begin{align*}
\E(f(&\Y^{(n)}_0, \Y^{(n)}_1, \ldots, \Y^{(n)}_k)) \\
&= \frac{1}{N(N-1)} \sum_{(i,j): i \neq j} \E_{\mathcal P_{k-1}^n\otimes \nu_{U^{(n)}_{i,j}(k-1)}} \Big(f(\Y_0, \ldots, \Y_{k-1},  g_{i,j}(\Y_{k-1}, U^{(n)}_{i,j}(k-1)))\Big)\\
&= \frac{1}{N(N-1)} \sum_{(i,j): i \neq j} \E_{\mathcal P_{k-1}^n \otimes \nu_{U^{(n)}_{i,j}(k-1)}} \Big(f(\Y_0, \ldots, \Y_{k-1},  g^{\text{cont}}_{i,j}(\Y_{k-1}, U^{(n)}_{i,j}(k-1)))\Big)+ O(\e) \\
&= \frac{1}{N(N-1)} \sum_{(i,j): i \neq j} \E_{\mathcal P_{k-1}^n \otimes \nu_{U^{(n)}_{i,j}(k-1)}} \Big(\tilde f_{i,j}(\Y_0, \ldots, \Y_{k-1}, U^{(n)}_{i,j}(k-1))\Big)+ O(\e).
\end{align*} 
Above, $\tilde f_{i,j}$ is a bounded continuous function. By iterating the same argument using the Markov property iteratively, we conclude, for $n$ large enough that 
\begin{align*}
\E(f(&\Y^{(n)}_0, \Y^{(n)}_1, \ldots, \Y^{(n)}_k)) = \left( \frac{1}{N(N-1)} \right)^{k} \\
&\phantom{x} \times \!\!\!\!\sum_{\stackrel{(i_0,j_0)}{ i_0 \neq j_0}}\!\! \cdots \sum_{\stackrel{(i_{k-1},j_{k-1})}{ i_{k-1}\neq j_{k-1}}} \E_{\mu_0^{(n)}\bigotimes_{\ell=0}^{k-1}\nu_{U^{(n)}_{i_{\ell},j_{\ell}}(\ell)} }\Big(\tilde f_{(i_0,j_0), \ldots,(i_{k-1}, j_{k-1})}(\Y_0, U^{(n)}_{i_{0},j_{0}}(0), \ldots, U^{(n)}_{i_{k-1},j_{k-1}}(k-1))\Big)\\
&\phantom{xXX}+ O(\e).
\end{align*} 
The sums above are finitely many, so the accumulated error is bounded by $C\e$. Each multiindexed $\tilde f$ is a bounded continuous function on all its arguments. Finally, the assumptions of the theorem imply the joined weak convergence 
\[
(\Y_0, U^{(n)}_{i_{0},j_{0}}(0), \ldots, U^{(n)}_{i_{k-1},j_{k-1}}(k-1)) \Longrightarrow({\bf X}^{(\infty)}_0, U^{(\infty)}_{i_{0},j_{0}}(0), \ldots, U^{(\infty)}_{i_{k-1},j_{k-1}}(k-1)).
\]
The limiting vector can be used to uniquely construct the CS-DT process using the indices of the associated function $\tilde f$. By reversing the decomposition above,   therefore 
\be
\big|\lim_{n\to \infty} \E_{\mathcal P^n}(f(\Y^{(n)}_0, \Y^{(n)}_1, \ldots, \Y^{(n)}_k)) -  \E_{\mathcal P^\infty}(f({\bf X}^{(\infty)}_0, {\bf X}^{(\infty)}_1, \ldots, {\bf X}^{(\infty)}_k))\big| = O(\e). 
\ee
Let $\e \to 0$ to finish the proof. 
\end{proof} 

\begin{remark}[Almost sure convergence for finite sample paths] 
Assume that the initial distributions satisfy  $\Y^{(n)}_0 \to {\bf X}^{(\infty)}_0$ a.e. as $n\to \infty$ and that we use common uniforms for each time step $k$, i.e.
\[
 U_{i,j}(k) \equiv U_{i,j}^{(n)}(k) = U_{i,j}^{(m)}(k)=U_{i,j}^{(\infty)}(k), \quad \text{ for all } n,m \in \N,
\]
while maintaining the independence across the time index.
Then for any fixed $k \in \N$
\[
(\Y^{(n)}_0, \ldots, \Y^{(n)}_k) \stackrel{a.s.}{\longrightarrow} ({\bf X}^{(\infty)}_0, \ldots, {\bf X}^{(\infty)}_k),
\]
provided the same indices $(i,j)$ are selected at each step. 
This is because of the compact state space for these processes. For any fixed $n$, the construction using now the common (in $n$) uniform random variables $U_{i,j}^{(n)}(\ell)$ creates an error of at most $2/n$ per step in the supremum norm of the state space, so the total error is $2k/n$, which vanishes as $n \to \infty$. 
\end{remark}

\section{Kinetic equations as thermodynamic limit of the Markov chain with continuous state space}
\label{sec:sara2}

We devote this section to proving that  equation \eqref{eq:kinetic_equation} is obtained as the limit in probability of \eqref{eq:martingale_general} as $N\rightarrow \infty$, (see Theorem \ref{th:hydrodynamic_limit}). Before stating the result rigorously, we need to mention some terminology and basic facts.

\begin{definition}[Solutions] We say that a measure $(\mu_t)_{t<T}$ is local solution if it satisfies \eqref{eq:kinetic_equation} for all functions $f$ which are bounded and measurable. If $T$ can be taken to be $+\infty$, then we say we have a (global) solution of \eqref{eq:kinetic_equation}.
\end{definition}

It is important to ascertain that solutions do exist, and this is the content of the next proposition. The proof of it follows the same arguments as in the proof for Smoluchowski's equation in \cite[Proposition 2.2]{norris1999smoluchowski}, and it is omitted from this manuscript.      

\begin{proposition}[Existence and uniqueness of solutions]
Suppose that $\mu_0\in \mathcal{M}_1(\R_+)$. The kinetic equation \eqref{eq:kinetic_equation} has a unique solution $(\mu_t)_{t\geq 0}$ with initial data $\mu_0$. 
\end{proposition}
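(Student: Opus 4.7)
The approach mirrors the proof of Proposition 2.2 in \cite{norris1999smoluchowski}, with substantial simplifications coming from the fact that $Q$ preserves the total mass of a probability measure rather than a first moment. The plan is to set up \eqref{eq:kinetic_equation} as a fixed-point problem $\mu=\Phi(\mu)$ on the space $C([0,T];\mathcal{M}_1(\R_+))$ of continuous curves of probability measures endowed with the sup-in-time total variation metric, and apply Banach's fixed point theorem for $T$ sufficiently small, then extend globally in time.

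The first ingredient is a global Lipschitz estimate for $Q$ on $\mathcal M_1(\R_+)$. Writing $Q(\mu)=B(\mu,\mu)$ for the symmetric bilinear form $B$ obtained by symmetrising $\mu(dx)\mu(dy)$ into $\tfrac12[\mu(dx)\nu(dy)+\nu(dx)\mu(dy)]$ in \eqref{eq:operator_Q}, the polarisation identity $Q(\mu)-Q(\nu)=B(\mu+\nu,\mu-\nu)$, combined with the pointwise bound $4\|g\|_\infty$ on the integrand in \eqref{eq:operator_Q}, yields
\[
\|Q(\mu)-Q(\nu)\|_{TV}\leq 4\|\mu+\nu\|_{TV}\|\mu-\nu\|_{TV}\leq 8\|\mu-\nu\|_{TV}
\]
whenever $\mu,\nu$ are probability measures.

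The second ingredient is that $\Phi$ stabilises $\mathcal M_1(\R_+)$. Taking $g\equiv 1$ in \eqref{eq:operator_Q} gives $\la 1,Q(\mu)\ra=0$, so total mass is preserved. For positivity preservation I would split $Q=G-L$ into gain and loss operators, where
\[
\la g,L(\mu)\ra=\int\!\!\int(g(x)+g(y))\mathbbm 1_{\{x+y\leq w_0\}}\mu(dx)\mu(dy),
\]
and $G(\mu)$ is the analogous expression built from the two creation terms $g(r(x+y))+g((1-r)(x+y))$. For a probability measure $\mu$ one checks the pointwise bound $L(\mu)(A)\leq 2\mu(A)$ for every Borel $A$, so rewriting \eqref{eq:kinetic_equation} in Duhamel form
\[
\mu_t=e^{-2t}\mu_0+\int_0^t e^{-2(t-s)}\bigl[G(\mu_s)+(2\mu_s-L(\mu_s))\bigr]\,ds
\]
expresses $\mu_t$ as a non-negative combination of non-negative measures whenever the inputs are, so the Picard iterates starting from the constant curve $\mu_s\equiv\mu_0$ stay inside $\mathcal M_1(\R_+)$.

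Combining the two, $\Phi$ is a contraction on $C([0,T];\mathcal M_1(\R_+))$ for any $T<1/16$; Banach's theorem gives a unique solution on $[0,T]$ and, since the Lipschitz constant is uniform in the initial datum, iterated restarts extend the solution to $[0,\infty)$. Uniqueness globally in time is then a direct application of Gr\"onwall's lemma to the inequality $\|\mu_t-\nu_t\|_{TV}\leq 8\int_0^t\|\mu_s-\nu_s\|_{TV}\,ds$ inherited from the Lipschitz bound. The main obstacle is the preservation of positivity along the Picard iteration: a naive iteration in the space of signed measures can produce non-positive iterates after a single step, and the Duhamel reformulation above is what resolves this cleanly, since it keeps $2\mu_s-L(\mu_s)$ a non-negative measure at every stage.
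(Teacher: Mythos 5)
Your proposal is correct and follows exactly the route the paper itself indicates: the paper omits the proof, stating only that it ``follows the same arguments as in the proof for Smoluchowski's equation in \cite[Proposition 2.2]{norris1999smoluchowski}'', and your write-up is precisely that Norris-style Picard/fixed-point argument, correctly simplified by the observation that $\la 1, Q(\mu)\ra=0$ lets one work with the plain total-variation metric on $\mathcal M_1(\R_+)$. The only presentational point to tighten is that the contraction should be applied to the Duhamel map itself (which does stabilise $C([0,T];\mathcal M_1(\R_+))$ thanks to your bound $L(\mu)(A)\leq 2\mu(A)$), rather than to the raw map $\mu\mapsto\mu_0+\int_0^\cdot Q(\mu_s)\,ds$, which need not preserve positivity -- a point you already flag, so this is a matter of wording rather than a gap.
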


Above we introduced $\mathcal{M}_1(\R_+)$ as the space of probability measures with support on the non-negative reals.
In general, $\mathcal{M}_1(K)$ denotes the set of probability measures on the set $K$. We have already discussed how the empirical measure $\mu_t^N \in  \mathcal{M}_1(\R_+)$. In particular, for any $t \ge 0$,  $\mu_t^N$ is a \emph{ random  element} of  $\mathcal{M}_1(\R_+)$, and its distribution is solely dictated by the distribution of the Markov chain at time $t$. 

The next proposition states the two main conservation properties that we are using throughout the manuscript. First we show that the support of the initial measure dictates the support of all $\mu_t$ without exiting the class of probability measures, and the second property is the conservation of total wealth. Recall the notation introduced in Section \ref{sub:mgf}.  

\begin{proposition} Suppose that $w_0<\infty$.
Assume that $\mu_0^N \in \mathcal{M}_1([0, w_0])$, then $\mu_t^N \in \mathcal{M}_1([0, w_0])$ for all times. Moreover, if $\la x,\mu_0^N\ra = m_0 \in \R_+$, then $\la x, \mu_t^N\ra = m_0$ for all times.
\end{proposition}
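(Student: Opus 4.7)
The plan is to work pathwise with the underlying Markov chain $({\bf X}_t^N)_{t\ge 0}$, from which the empirical measure $\mu_t^N$ is built via \eqref{eq:emp}. Both assertions reduce to observations about a single jump of the chain, so the proof is essentially bookkeeping once the jumps are analysed.

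First I would establish support preservation. At $t=0$, every coordinate $X_0^{i,N}$ lies in $[0,w_0]$ since $\mu_0^N\in\mathcal{M}_1([0,w_0])$. Reading off the generator \eqref{eq:generator}, the cutoff $\mathbbm{1}_{\{x+y\le W_N\}}$ ensures that the only jumps which can occur between agents $i,j$ are those with $X^{i,N}+X^{j,N}\le W_N=w_0$. For such a jump, the new values $r(X^{i,N}+X^{j,N})$ and $(1-r)(X^{i,N}+X^{j,N})$ both lie in $[0,X^{i,N}+X^{j,N}]\subseteq[0,w_0]$, while the remaining $N-2$ coordinates are untouched. Hence by induction on the successive jump times of the underlying Poisson clock, $X_t^{i,N}\in[0,w_0]$ for every $i$ and every $t\ge 0$, which means $\mu_t^N([0,w_0])=1$ for all $t\ge 0$.

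The conservation of total wealth then follows along the same line of reasoning. Note that $\la x,\mu_t^N\ra=\tfrac{1}{N}\sum_{i=1}^N X_t^{i,N}$, and every admissible jump replaces a pair $(X^{i,N},X^{j,N})$ by $(r(X^{i,N}+X^{j,N}),(1-r)(X^{i,N}+X^{j,N}))$, whose sum equals $X^{i,N}+X^{j,N}$. Therefore $\sum_{i=1}^N X_t^{i,N}$ is invariant across every jump, hence constant in $t$, and $\la x,\mu_t^N\ra=\la x,\mu_0^N\ra=m_0$ for all $t\ge 0$.

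As a sanity check, one can also recover the conservation statement from the martingale formulation \eqref{eq:martingale_general}: once support in $[0,w_0]$ is secured, $g(x)=x$ acts as an effectively bounded test function, and $\la g,Q^{(N)}(\mu^N)\ra=0$ because the integrand $r(x+y)+(1-r)(x+y)-x-y$ vanishes identically on the admissible region. There is no real obstacle in either part; the only point requiring care is to respect the cutoff $\mathbbm{1}_{\{x+y\le W_N\}}$ in \eqref{eq:generator}, which is precisely what the support-preservation step handles and which guarantees that no jump can ever push a coordinate out of $[0,w_0]$.
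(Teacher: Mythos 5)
Your proof is correct, but it takes a genuinely different route from the paper's. The paper argues at the level of the measure-valued evolution: it computes $\la \mathbbm1\{x\leq w_0\}, Q(\mu)\ra =0$ for any measure $\mu$ supported on $\R_+$ (and analogously with $g(x)=x\,\mathbbm1\{x\le w_0\}$), and then reads off constancy of $\la g,\mu_t^N\ra$ from the evolution equation. You instead argue pathwise on the underlying jump chain: the cutoff $\mathbbm1\{x+y\le W_N\}$ forbids any jump that could create a coordinate above $w_0$, each admissible jump sends $(x,y)$ to a pair in $[0,x+y]^2$ with the same sum, and induction over the Poisson jump times does the rest. Your version is more elementary and, for the finite-$N$ statement actually being claimed, arguably tighter: the paper's displayed argument invokes \eqref{eq:kinetic_equation}, which is the limiting equation for $\mu_t$, whereas the finite-$N$ object satisfies the martingale identity \eqref{eq:martingale_general}, so the generator computation alone only shows $\la g,\mu_t^N\ra-\la g,\mu_0^N\ra$ is a mean-zero martingale; one still needs the jump-by-jump observation (which is exactly your argument) to conclude it is pathwise constant. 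Two small points of care: the cutoff in the generator is $x+y\le W_N$, not $x+y\le w_0$, and in general $W_N\le w_0$ only under the monotonicity assumption on $W_N$ from Theorem \ref{th:hydrodynamic_limit} -- your identification $W_N=w_0$ should be replaced by $W_N\le w_0$, which is all the argument needs; and your closing remark correctly flags that $g(x)=x$ is unbounded, which is why the support-preservation step must come first.
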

\begin{proof}
To check the proposition one just needs to notice that
\[ \la \mathbbm1\{x\leq w_0\}, Q(\mu)\ra =0,\]
for any measure $\mu$. Therefore, by \eqref{eq:kinetic_equation}, we have that
\[ \la \mathbbm1\{x\leq w_0\}, \mu_t^N \ra = \la \mathbbm1\{x\leq w_0\}, \mu_0^N\ra=1,\]
and so $\mu_t^N(\{ x\ ; \ x\leq w_0\}) =1$. 
This implies we can write $\mu_t^N \in \mathcal{M}_1([0,w_0])$ for any $t$. 

The second statement can be proven analogously substituting $g(x)= x\mathbbm1\{x\leq w_0\}$ in \eqref{eq:def_QN}. 
\end{proof}

The symbol  $D(K, S)$ denotes the space of c\`adl\`ag (right continuous with left limit) functions from $K$ to $S$, called the Skorokhod space.  
We wish we to study the process of the empirical measures $\{ \mu_t^N \}_{t \ge 0}$ as a sequence in $N$. 
For any fixed $N$, the sequence $\{ \mu_t^N \}_{t\ge 0}$ is an element of $D([0,\infty); \mathcal{M}_1(\R_+))$. 
All necessary background information for Skorokhod spaces that will be used in the section can be found in the Appendix.

With the notation set, we can now proceed and prove theorem \ref{th:hydrodynamic_limit}. Technical proofs are left to the end of the section to not mar the exposition. Again, recall the notation from Section \ref{sub:mgf}.

\subsection{Proof of Theorem \ref{th:hydrodynamic_limit}}
\bigskip
%
 The main idea for the proof 
is to take the limit as $N\to \infty$ in the martingale formulation \eqref{eq:martingale_general} by following the methodology presented in \cite{norris1999smoluchowski}. 

The theorem can be proven directly from the following three propositions. We do that right after these propositions are proven.

\begin{proposition}[Martingale convergence]
\label{prop:convergence_martingale}
For any $g\in C_b(\R_+)$, $t\geq 0$, it holds that
\[
\lim_{N \to \infty} \sup_{0 \leq s\leq t} |M^{g, N}_s| = 0\qquad \mbox{in } \mathcal L^2(\R),
\]
where $M^{g,N}_t$ is defined in \eqref{eq:martingale_general}.
In particular, the limit also holds in probability.
\end{proposition}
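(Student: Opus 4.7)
The plan is to control $\sup_{s \le t} |M^{g,N}_s|$ via Doob's $L^2$ maximal inequality applied to the martingale $M^{g,N}$, and then to show that the predictable quadratic variation is of order $1/N$. Concretely, Doob gives
\[
\E\Bigl[\sup_{0\le s\le t} |M^{g,N}_s|^2\Bigr] \le 4\, \E\bigl[(M^{g,N}_t)^2\bigr] = 4\, \E\bigl[\la M^{g,N}\ra_t\bigr],
\]
so it suffices to show that the right-hand side is $O(1/N)$, uniformly in $N$. Convergence in $\mathcal L^2$ then follows immediately, and convergence in probability is a consequence by Markov's inequality.

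The predictable quadratic variation of the martingale $M^{g,N}$ associated to a pure jump process with generator $\mathcal{G}$ is given by the standard carré-du-champ formula
\[
\la M^{g,N}\ra_t = \int_0^t \Gamma(F_g, F_g)(\mu^N_s)\, ds, \qquad \Gamma(F_g,F_g)(\mu) := \mathcal{G}F_g^2(\mu) - 2 F_g(\mu)\, \mathcal{G}F_g(\mu),
\]
where $F_g(\mu) = \la g, \mu\ra$. Applying \eqref{eq:generator} to $F_g^2$ and $F_g$ and simplifying, one gets
\[
\Gamma(F_g,F_g)(\mu^N) \;=\; \int_0^1\!\!\doubleint \bigl(F_g(\mu^{(x,y,r),N}) - F_g(\mu^N)\bigr)^2 \mathbbm{1}_{\{x+y\le W_N\}}\, N\, \mu^{(2,N)}(dx,dy)\, dr.
\]
The key observation is that a single interaction changes $F_g(\mu^N)$ only by $O(1/N)$: writing out $\mu^{(x,y,r),N}$,
\[
F_g(\mu^{(x,y,r),N}) - F_g(\mu^N) \;=\; \frac{1}{N}\bigl(g(r(x+y)) + g((1-r)(x+y)) - g(x) - g(y)\bigr),
\]
which, since $g$ is bounded, is uniformly bounded by $4\|g\|_\infty/N$ in absolute value.

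Substituting this into the expression for $\Gamma$ and using that the total mass of $\mu^{(2,N)}$ is $1 - 1/N \le 1$, we obtain the pointwise bound
\[
\Gamma(F_g,F_g)(\mu^N_s) \;\le\; \frac{16\|g\|_\infty^2}{N^2}\cdot N \cdot \mu^{(2,N)}_s(\R_+\times\R_+) \;\le\; \frac{16\|g\|_\infty^2}{N},
\]
valid for every realisation of the chain and every $s$. Hence $\E\la M^{g,N}\ra_t \le 16\|g\|_\infty^2 t/N$ and combining with Doob's inequality,
\[
\E\Bigl[\sup_{0\le s\le t} |M^{g,N}_s|^2\Bigr] \;\le\; \frac{64\, \|g\|_\infty^2\, t}{N} \;\longrightarrow\; 0,
\]
which is the desired $\mathcal{L}^2$ convergence, and implies convergence in probability.

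The only mildly delicate step is justifying the carré-du-champ identity $\E[(M^{g,N}_t)^2] = \E[\la M^{g,N}\ra_t]$ for this particular jump Markov process; this is standard for pure jump processes on a Polish state space with bounded jump rates per particle and can be invoked from references such as \cite{kipnis2013scaling} (Appendix), the same source used earlier for the martingale property itself. Everything else is a direct computation exploiting the fact that a single binary interaction perturbs the empirical measure by an $O(1/N)$ atomic modification.
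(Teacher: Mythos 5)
Your proposal is correct and follows essentially the same route as the paper: the paper invokes Proposition 8.7 of Darling--Norris (a Doob $\mathcal L^2$ consequence bounding $\E[\sup_{s\le T}|M^{g,N}_s|^2]$ by $4\E\int_0^T\alpha^{g,N}(\mu^{(2,N)}_s)\,ds$), where $\alpha^{g,N}$ is exactly your carr\'e-du-champ integrand, and then uses the same $O(1/N)$ jump-size bound to arrive at the identical estimate $64\|g\|_\infty^2 T/N$. The only cosmetic difference is that you derive the quadratic-variation formula from $\Gamma(F_g,F_g)=\mathcal G F_g^2-2F_g\mathcal G F_g$ rather than citing it, and you bound the total mass of $\mu^{(2,N)}$ by $1$ where the paper uses $(N-1)/N$; both yield the same conclusion.
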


\begin{proposition}[Weak convergence for the measures]
\label{prop:almost_sure_convergence_for_measures}
The sequence of laws $\mathcal P_N$ of the elements $\{\mu_t^N\}_{t \in \R_+}$ is tight. Therefore there exists a weakly convergent subsequence  $(\mu^{N_k})_{k\in\NN}$ in $D([0,\infty); \mathcal{M}_1(\R_+))$ as $k~\rightarrow~\infty$.
\end{proposition}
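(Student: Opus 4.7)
The plan is to establish tightness of $\{\mathcal P_N\}$ in $D([0,\infty); \mathcal{M}_1(\R_+))$ by combining compact containment on the state space $\mathcal M_1(\R_+)$ with tightness of the real-valued projections $\la g, \mu^N_\cdot\ra$ for a separating family of test functions $g \in C_b(\R_+)$, and then lifting via the Jakubowski / Roelly-Coppoletta criterion for measure-valued c\`adl\`ag processes. Prokhorov's theorem then supplies the weakly convergent subsequence.

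For compact containment, I would exploit that the dynamics conserve total wealth exactly, so $\la x, \mu^N_t\ra = W_N/N \le \la x, \mu_0\ra =: m$ for all $N$ and every $t \ge 0$ by assumption \eqref{eq:bound_initial_data}. Markov's inequality yields the uniform tail bound $\mu^N_t([R,\infty)) \le m/R$, so each $\mu^N_t$ lies almost surely in the Prokhorov-relatively-compact set $K_m := \{\mu \in \mathcal M_1(\R_+) : \la x, \mu\ra \le m\}$. For the temporal regularity, fix $g \in C_b(\R_+)$ and verify Aldous's stopping-time criterion for $\{\la g, \mu^N_\cdot\ra\}$ in $D([0,\infty); \R)$: for $T>0$ and stopping times $\tau_1 \le \tau_2 \le T$ with $\tau_2 \le \tau_1 + \delta$, the martingale decomposition \eqref{eq:martingale_general} gives
\[
\la g, \mu^N_{\tau_2}\ra - \la g, \mu^N_{\tau_1}\ra = \int_{\tau_1}^{\tau_2} \la g, Q^{(N)}(\mu^N_s)\ra\, ds + \bigl(M^{g,N}_{\tau_2} - M^{g,N}_{\tau_1}\bigr).
\]
The drift integrand is bounded by $4\|g\|_\infty$ uniformly in $s$ and $N$ (the signed pair-measure $\mu^{(2,N)}$ has total variation at most $2$ on $\R_+\times\R_+$ and $r$ ranges over $[0,1]$), so the drift contribution is at most a constant multiple of $\delta$ deterministically. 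The martingale increment is dominated by $2\sup_{s \le T}|M^{g,N}_s|$, which tends to zero in $L^2$ and hence in probability by Proposition \ref{prop:convergence_martingale}. Choosing $\delta$ small and $N$ large verifies Aldous's criterion.

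With compact containment in hand and tightness of $\la g, \mu^N_\cdot\ra$ established for every $g$ in a countable separating family $\mathbb F \subset C_b(\R_+)$, the Jakubowski tightness criterion yields tightness of $\{\mathcal P_N\}$ in $D([0,\infty); \mathcal M_1(\R_+))$, after which Prokhorov's theorem extracts the desired weakly convergent subsequence $(\mu^{N_k})_{k \in \bN}$. The main obstacle I anticipate is the lifting from real-valued to measure-valued tightness, which in general demands both a compact containment statement and control over a sufficiently rich family of test functionals; here the exact conservation of total wealth collapses the containment issue into a one-line Markov estimate, while Proposition \ref{prop:convergence_martingale} together with the trivial drift bound makes the Aldous condition essentially automatic once the martingale decomposition \eqref{eq:martingale_general} is written down.
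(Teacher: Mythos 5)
Your proposal is correct and follows essentially the same route as the paper: compact containment in $\mathcal M_1(\R_+)$ via conservation of total wealth, tightness of the real-valued projections $\la g,\mu^N_\cdot\ra$ from the martingale decomposition, and the Jakubowski criterion plus Prokhorov's theorem to conclude. The only (immaterial) difference is that you verify temporal equicontinuity of the projections via Aldous's stopping-time criterion, whereas the paper uses the Ethier--Kurtz modulus-of-continuity criterion through the $\mathcal L^2$ estimate $\E[\sup_{r\in[s,t)}|\la f,\mu^N_r-\mu^N_s\ra|^2]\le A\lp(t-s)^2+(t-s)/N\rp$; both rest on the same two ingredients, namely the deterministic bound on the drift and the vanishing of $\sup_{s\le T}|M^{g,N}_s|$.
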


\begin{proposition}[Convergence for the trilinear term]
\label{prop:convergence_trilinear_term}
For any converging subsequence $\{ \mu^{N_k} \}_{k \in \N}$
(and particularly for those established in Proposition \ref{prop:almost_sure_convergence_for_measures}), 
it holds that
$$\int^t_0 \langle f, Q^{(N_k)}(\mu^{N_k}_s)\rangle \, ds \rightarrow \int^t_0 \langle f, Q(\mu_s)\rangle \, ds \quad \mbox{weakly,}$$
as $k\to \infty$.
\end{proposition}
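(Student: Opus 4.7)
The plan is to upgrade the subsequential weak convergence $\mu^{N_k} \Longrightarrow \mu$ in $D([0,\infty); \mathcal{M}_1(\R_+))$ to almost-sure convergence via Skorokhod's representation theorem, and then to pass to the limit pointwise in time. Since $\mu_\cdot$ is c\`adl\`ag, all but at most countably many $s \in [0,t]$ are continuity points of $s\mapsto \mu_s$, so almost surely $\mu^{N_k}_s \Longrightarrow \mu_s$ weakly for Lebesgue-a.e.\ $s$, and tensorising gives $\mu^{N_k}_s \otimes \mu^{N_k}_s \Longrightarrow \mu_s \otimes \mu_s$ weakly on $\R_+^2$ for a.e.\ $s$.

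First I would reduce $Q^{(N_k)}$ to an expression involving only the product measure. Writing $\mu^{(2,N_k)}_s = \mu^{N_k}_s \otimes \mu^{N_k}_s - \tfrac{1}{N_k}(\mathrm{diag})_*\mu^{N_k}_s$ and using the uniform bound $|\Phi(x,y,r)| := |f(r(x+y)) + f((1-r)(x+y)) - f(x) - f(y)| \le 4\|f\|_\infty$, the diagonal term contributes at most $4t\|f\|_\infty/N_k$ to the time integral, hence vanishes. Integrating out $r$ produces the bounded continuous kernel
\[
\phi(x,y) \,:=\, \int_0^1 \Phi(x,y,r)\,dr \,=\, \frac{2}{x+y}\int_0^{x+y} f(u)\,du \,-\, f(x) - f(y), \qquad \phi(0,0):=0,
\]
and the task becomes
\[
\int_0^t \iint \phi(x,y)\, \mathbbm{1}_{\{x+y \le W_{N_k}\}}\, d(\mu^{N_k}_s \otimes \mu^{N_k}_s)\,ds \;\longrightarrow\; \int_0^t \iint \phi(x,y)\, \mathbbm{1}_{\{x+y \le w_0\}}\, d(\mu_s \otimes \mu_s)\,ds.
\]

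The moving cutoff is handled in two cases. When $w_0 = \infty$, conservation of wealth gives $\la x, \mu^{N_k}_s\ra = W_{N_k}/N_k$, and Markov's inequality yields $\mu^{N_k}_s \otimes \mu^{N_k}_s(\{x+y > W_{N_k}\}) \le 2/N_k$, uniformly in $s$, so the cutoff disappears in the limit. When $0 < w_0 < \infty$, for each $\delta > 0$ I would sandwich $\mathbbm{1}_{\{x+y \le w_0\}}$ between two continuous ramps that agree off the slab $\{w_0 - \delta \le x+y \le w_0\}$; Portmanteau's theorem then bounds the slab error by $\mu_s \otimes \mu_s(\{w_0 - \delta \le x+y \le w_0\})$, which shrinks to $\mu_s \otimes \mu_s(\{x+y = w_0\})$ as $\delta \to 0$ and is nonzero for at most countably many $s$, contributing nothing after Lebesgue integration. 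On the continuous ramps themselves, the weak convergence of $\mu^{N_k}_s \otimes \mu^{N_k}_s$ delivers pointwise convergence in $s$ of the inner integral, and the uniform bound $4\|f\|_\infty$ lets bounded convergence promote this to convergence of the time integral.

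The main obstacle is the joint treatment of the moving, non-smooth cutoff $\mathbbm{1}_{\{x+y \le W_{N_k}\}}$ together with possible atoms of $\mu_s$ on the boundary $\{x+y = w_0\}$ in the bounded-wealth regime: the limit kernel is a bounded Borel (not continuous) function, so one must combine the moment/Markov bound (for $w_0 = \infty$) with the Portmanteau smoothing argument (for $w_0 < \infty$) and exploit that the ``bad'' times $s$ form a Lebesgue-null set.
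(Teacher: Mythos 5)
Your overall strategy is sound and in places more careful than the paper's own argument: the paper proves continuity of the limit path and invokes the continuous mapping theorem to get uniform-in-$s$ convergence, then splits $Q^{(N)}-Q$ into the $O(1/N)$ pair-measure correction plus a term it bounds rather loosely by a total-variation-type quantity; you instead use Skorokhod representation, pointwise-in-time convergence at continuity points, dominated convergence in $s$, and — usefully — you integrate out $r$ to exhibit the bounded \emph{continuous} kernel $\phi$, which makes the application of weak convergence of $\mu^{N_k}_s\otimes\mu^{N_k}_s$ legitimate without further comment. The diagonal correction and the $w_0=\infty$ case are handled correctly.

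There is, however, one unjustified step in the case $0<w_0<\infty$: the claim that $\mu_s\otimes\mu_s(\{x+y=w_0\})$ is nonzero for at most countably many $s$. Nothing in the hypotheses rules out a c\`adl\`ag measure-valued path whose marginals carry atoms at conjugate points $a$ and $w_0-a$ on a set of times of positive Lebesgue measure, so this assertion needs proof and your sandwich argument does not close without it. Fortunately the boundary is a red herring and the step can be repaired cleanly: since $\{x+y>w_0\}$ is open, $W_{N_k}\le w_0$, and the product measure charges $\{x+y>W_{N_k}\}$ only through the diagonal (mass at most $1/N_k$; alternatively your Markov bound gives $2/N_k$), the Portmanteau theorem yields
\begin{equation*}
\mu_s\otimes\mu_s\bigl(\{x+y>w_0\}\bigr)\;\le\;\liminf_{k\to\infty}\,\mu^{N_k}_s\otimes\mu^{N_k}_s\bigl(\{x+y>W_{N_k}\}\bigr)\;=\;0 ,
\end{equation*}
at every continuity time $s$. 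Hence $\iint\phi\,\mathbbm1_{\{x+y\le w_0\}}\,d(\mu_s\otimes\mu_s)=\iint\phi\,d(\mu_s\otimes\mu_s)$, the indicator in the limit operator is harmless (the critical line $\{x+y=w_0\}$ lies inside the closed set where the indicator equals $1$), and the whole convergence follows from $\iint\phi\,d(\mu^{N_k}_s\otimes\mu^{N_k}_s)\to\iint\phi\,d(\mu_s\otimes\mu_s)$ plus the $O(1/N_k)$ corrections — no continuous ramps and no control of boundary atoms are needed. With this substitution your proof is complete.
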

			
\subsubsection{Proof of Proposition \ref{prop:convergence_martingale}.}

Keep in mind that $M^{g, N}_t$ is a martingale. From Proposition 8.7 in \cite{darling2008differential} (a consequence of Doob's $\mathcal L^2$ inequality) we have that for any finite $T$,
\be \label{eq:martingale_estimate_norris}
\E  \left[ \sup_{s\leq T} |M^{g, N}_s|^2\right] \leq 4 \E \int^T_0 \alpha^{g, N}(\mu^{(2, N)}_s) ds,
\ee
where in this case
\begin{align} 
\alpha^{g, N}(\mu^{(2, N)}_s) &= \int_{[0,1]}\int_{\R_+^2}\lp \frac{1}{N} \Big( g(r(x+y))+g((1-r)(x+y))-g(x)-g(y)\Big) \rp^2\nonumber \\
& \phantom{xxxxxxxxxxxxxxxxxxxxxxxxxxxx}\times\ \mathbbm1_{\{x+y\leq W_N\}}\ N\mu^{(2,N)}_s(dx,dy)\, dr \notag\\
& \le \frac{N}{N^2} \frac{N-1}{N} 16 \| g\|_{\infty}^2 \le \frac{16}{N} \| g\|_{\infty}^2. 
\label{eq:alpha_previsible} 
\end{align}
Use this estimate in  \eqref{eq:martingale_estimate_norris}  to obtain
\begin{equation} \label{eq:bound_martingale}
\E  \left[ \sup_{s\leq T} |M^{g, N}_s|^2\right]  \leq \frac{1}{N}64 \|g\|^2_{\infty}T.
\end{equation}
This gives the convergence of the supremum towards 0 in $\mathcal L^2$ as $N \to \infty$, which implies also the convergence in probability. \qed

\subsubsection{Proof of Proposition \ref{prop:almost_sure_convergence_for_measures}}
\label{sec:step2_convergence_of_measures}

The results stated in the proposition will be proven at the end of this subsection, and they follow from two lemmas.
\begin{lemma} 
\label{lem:tightness_for_the_action}
Fix an $f \in  C_b(\R_+)$. Then the sequence of laws  of $(\langle f, \mu^N \rangle)_{N\in\NN}$ on $D([0,\infty); \R_+)$ is tight.			
\end{lemma}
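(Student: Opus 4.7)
The plan is to apply Aldous' tightness criterion to the real-valued c\`adl\`ag process $\{\langle f, \mu^N_t\rangle\}_{t \geq 0}$, using the semimartingale decomposition already set up in \eqref{eq:martingale_general},
\begin{equation*}
\langle f, \mu^N_t\rangle = \langle f, \mu^N_0\rangle + M^{f,N}_t + \int_0^t \langle f, Q^{(N)}(\mu^N_s)\rangle\, ds.
\end{equation*}
Since $\mu^N_s$ is a probability measure, we have $|\langle f, \mu^N_t\rangle| \leq \|f\|_\infty$ uniformly in $N$ and $t$, so the one-dimensional marginals take values in the compact interval $[-\|f\|_\infty, \|f\|_\infty]$, yielding marginal tightness at every fixed $t$ for free.

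Next I would verify the Aldous increment condition. Fix $T, \eta, \epsilon > 0$, an arbitrary $\{\cF^N_t\}$-stopping time $\tau \leq T$, and $\delta > 0$. From the definition \eqref{eq:def_QN} of $Q^{(N)}$, the uniform bound $4\|f\|_\infty$ on the integrand, and the total-variation bound $|\mu^{(2,N)}_s|(\R \times \R) \leq 1 + 1/N$ coming from \eqref{eq:pairmeasure}, we get a deterministic pointwise bound $|\langle f, Q^{(N)}(\mu^N_s)\rangle| \leq C\|f\|_\infty$ for some universal $C$. Consequently the finite-variation part is controlled pathwise:
\begin{equation*}
\left|\int_{\tau}^{\tau+\delta} \langle f, Q^{(N)}(\mu^N_s)\rangle\, ds\right| \leq C\|f\|_\infty\, \delta,
\end{equation*}
which is smaller than $\eta/2$ as soon as $\delta$ is chosen small enough, independently of $N$ and of $\tau$.

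For the martingale contribution I would apply the optional sampling theorem to $(M^{f,N})^2$, together with the bracket estimate \eqref{eq:alpha_previsible} already used in the proof of Proposition \ref{prop:convergence_martingale}, to obtain
\begin{equation*}
\E\bigl[(M^{f,N}_{\tau+\delta} - M^{f,N}_\tau)^2\bigr] \;\leq\; \frac{16\|f\|_\infty^2\, \delta}{N}.
\end{equation*}
Chebyshev then gives $\P(|M^{f,N}_{\tau+\delta} - M^{f,N}_\tau| > \eta/2) \leq 64\|f\|_\infty^2 \delta/(N\eta^2) \to 0$ as $N \to \infty$. Combining this with the finite-variation bound verifies the Aldous criterion $\sup_{\tau \leq T}\P(|\langle f, \mu^N_{\tau+\delta}\rangle - \langle f, \mu^N_\tau\rangle| > \eta) < \epsilon$ for $N$ large, establishing tightness on $D([0,T]; \R)$ for every finite $T$ and hence on $D([0,\infty); \R)$ by a standard diagonal/restriction argument.

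The main obstacle is mostly notational bookkeeping: writing down Aldous' criterion in the stopping-time form adapted to the natural filtration of $\mu^N$, and checking that the quadratic-variation estimate from Proposition \ref{prop:convergence_martingale} transfers cleanly to the random interval $[\tau, \tau+\delta]$. No new analytic input is needed beyond what has already been developed, so this is essentially a packaging of \eqref{eq:alpha_previsible} and the trivial $L^\infty$-bound on $\langle f, Q^{(N)}(\mu^N_s)\rangle$ into the standard Aldous--Rebolledo framework.
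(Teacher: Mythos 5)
Your proof is correct, and it reaches the conclusion by a genuinely different route from the paper. The paper invokes the Ethier--Kurtz criterion (Theorem \ref{th:criteria_tightness_Skorokhod}): after the same compact-containment observation $|\la f,\mu^N_t\ra|\le\|f\|_\infty$, it controls the modulus of continuity $w'$ by bounding $\E\bigl[\sup_{r\in[s,t)}|\la f,\mu^N_r-\mu^N_s\ra|^2\bigr]\le A\bigl((t-s)^2+(t-s)/N\bigr)$ via Doob's inequality applied to the martingale part and a pathwise bound on the drift, and then defers the passage from this moment estimate to condition $(ii)$ of the criterion to \cite{merino2016isotropic}. You instead use Aldous' stopping-time criterion, which needs only second moments of increments over random intervals $[\tau,\tau+\delta]$ rather than a supremum inside the expectation; optional sampling applied to $(M^{f,N})^2$ minus its compensator, together with the bracket bound \eqref{eq:alpha_previsible}, gives $\E[(M^{f,N}_{\tau+\delta}-M^{f,N}_\tau)^2]\le 16\|f\|_\infty^2\delta/N$, and Chebyshev plus the deterministic $O(\delta)$ bound on the finite-variation part closes the argument. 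Both proofs rest on exactly the same two analytic inputs (the $L^\infty$ bound on $\la f,Q^{(N)}(\mu^N_s)\ra$ and \eqref{eq:alpha_previsible}); your version is somewhat more self-contained since it does not outsource the final step to an external reference, while the paper's Doob-based sup-estimate \eqref{eq:4bound_square_measure} has the side benefit of being reusable later in the uniform-convergence arguments. One housekeeping remark: Aldous' condition is only required for $N\ge N_0$, and the finitely many remaining laws are individually tight on the Polish space $D([0,\infty);\R)$ by Ulam's theorem, so your ``for $N$ large'' formulation is adequate.
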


\begin{proof}[Proof of Lemma \ref{lem:tightness_for_the_action}]
 We use  Theorem \ref{th:criteria_tightness_Skorokhod} in the Appendix. Thus, we need to verify the two conditions of the Theorem.   
 
 To prove condition $(i)$ of the Theorem we use that for any fixed $f \in C_b(\R_+)$
\[
|\la f, \mu^N_t \ra | = \left| \frac{1}{N}\sum^N_{i=1}f(X^{i,N}_t)\right| \leq  \frac{1}{N} \sum_{i=1}^N|f(X^{i, N}_t)|\leq \|f\|_\infty 
\]
so for all $t\geq 0$, $\la f, \mu^N_t \ra \in  [-\|f\|_\infty, \|f\|_\infty]$. To directly see the connection with Theorem  
\ref{th:criteria_tightness_Skorokhod}, set $\Lambda_{\eta, t} = [-\|f\|_\infty, \|f\|_\infty]$ (fixed for any $\eta$) and $X_N(t) = \la f, \mu^N_t \ra$.

The verify the second condition $(ii)$ of Theorem \ref{th:criteria_tightness_Skorokhod} we make use of the following inequalities:
\begin{equation} \label{eq:4bound_square_martingale}
\E \left[ \sup_{r\in[s,t)}|M_{r}^{f, N}-M_s^{f, N}|^2\right] \leq \frac{1}{N} 64 \|f\|^2_\infty (t-s)
\end{equation}
 and 
\begin{equation} \label{eq:4bound_square_operator}
 \E \left[ \sup_{r\in[s,t)} \left( \int^r_s \langle f, Q^{(N)}(\mu^N_u) \rangle \, du \rp^2 \right]\leq 16 \|f\|^2_\infty (t-s)^2.
 \end{equation}
 To see inequality \eqref{eq:4bound_square_martingale} recall that since $M^{f, N}_t$ is an $\mathcal F_t$-martingale, then $\tilde M^{f, N}_t = M^{f, N}_{t+s} -  M^{f, N}_{s} $ is an   $\tilde{\mathcal F}_t =  \mathcal F_{t+s}$
martingale. Therefore
 \begin{align*}
 \E \left[ \sup_{r\in[s,t)}|M_{r}^{f, N}-M_s^{f, N}|^2\right] =  \E \left[ \sup_{r\in[0,t-s)}|\tilde M_{r}^{f, N}|^2\right] \le  \frac{1}{N}64 \|f\|^2_{\infty}(t-s),
 \end{align*}
 just like in equation \eqref{eq:bound_martingale}.
 Inequality \eqref{eq:4bound_square_operator}  follows from \eqref{eq:def_QN} and a bound similar to the one used in \eqref{eq:alpha_previsible}.
 
Equations \eqref{eq:4bound_square_martingale} and \eqref{eq:4bound_square_operator}
together give  the bound
\begin{equation} \label{eq:4bound_square_measure}
\E \left[ \sup_{r\in[s,t)}|\langle f, \mu^N_r-\mu^N_s\rangle|^2\right] \leq A\lp(t-s)^2+\frac{(t-s)}{N} \rp
\end{equation}
for some $A>0$ depending only on $\|f\|_\infty$. 
With these estimates the proof follows as in \cite{merino2016isotropic} where further details can be found.
\end{proof}

\begin{lemma} 
\label{lem:tightness_for_the_measures}
The sequence of laws $\{ \mathcal P_N\}_{N \in \NN}$ of the elements  $(\mu^N_t)_{t \in \R_+}  \in D([0,\infty); \mathcal{M}_1(\R_+))$ is tight.
\end{lemma}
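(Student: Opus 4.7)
The plan is to invoke a Jakubowski-type tightness criterion for càdlàg processes with values in the Polish space $\mathcal{M}_1(\R_+)$ (endowed with the topology of weak convergence). This criterion reduces tightness of $\{\mathcal P_N\}$ in $D([0,\infty); \mathcal{M}_1(\R_+))$ to two ingredients: (i) a \emph{compact containment condition}, namely that for every $\eta, T > 0$ there exists a compact set $K_{\eta, T} \subset \mathcal{M}_1(\R_+)$ with
\[
\inf_N \P\bigl(\mu^N_t \in K_{\eta, T} \text{ for all } t \in [0, T]\bigr) \ge 1 - \eta;
\]
and (ii) tightness in $D([0,\infty); \R)$ of each projection $(\langle f, \mu^N \rangle)_N$ as $f$ ranges over a countable family $\mathcal F \subset C_b(\R_+)$ that separates points in $\mathcal{M}_1(\R_+)$.

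Condition (ii) is supplied by Lemma \ref{lem:tightness_for_the_action} applied one function at a time; for concreteness I would take $\mathcal F = \{x \mapsto e^{-qx} : q \in \Q_+\}$, which separates probability measures on $\R_+$ via injectivity of the Laplace transform. For condition (i), the key structural input is that the pairwise interaction conserves the total wealth, so deterministically $\langle x, \mu^N_t \rangle = W_N / N$ for every $t \ge 0$ and every $N$. Under assumption \eqref{eq:bound_initial_data} this quantity is bounded uniformly: $W_N/N = \langle x, \mu^N_0 \rangle \le \langle x, \mu_0 \rangle =: C < \infty$. Markov's inequality then yields the almost-sure tail bound $\mu^N_t([M, \infty)) \le C/M$, uniformly in $N$, $t$, and $M$. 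Consequently the deterministic set
\[
K := \bigl\{ \nu \in \mathcal{M}_1(\R_+) : \nu([M, \infty)) \le C/M \text{ for all } M > 0 \bigr\}
\]
is a uniformly tight family of measures whose closure is compact by Prokhorov's theorem, and it contains every $\mu^N_t$ almost surely. This delivers compact containment with probability one, uniformly in $\eta$ and $T$.

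Combining (i) and (ii) via Jakubowski's criterion then yields the tightness of $\{\mathcal P_N\}$ in $D([0,\infty); \mathcal{M}_1(\R_+))$. The only real obstacle is the non-compactness of the underlying state space $\R_+$, which precludes a trivial compact-containment argument; it is cleanly dissolved by exploiting the wealth-conservation first-moment bound, a structural feature of the dynamics. All the analytic work for ingredient (ii), and in particular the modulus-of-continuity estimate \eqref{eq:4bound_square_measure}, has already been done in Lemma \ref{lem:tightness_for_the_action}, so very little extra labour is required beyond assembling the two pieces.
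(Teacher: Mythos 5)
Your proposal is correct and follows essentially the same route as the paper: Jakubowski's criterion (Theorem \ref{th:jakubowski_criteria}), with compact containment supplied by the conserved first moment $\la x,\mu^N_t\ra=W_N/N\le\la x,\mu_0\ra$ (the paper uses the closed set $W_C=\{\tau:\int x\,\tau(dx)\le C\}$, which sits inside your Markov-inequality tail set $K$), and the projection tightness supplied by Lemma \ref{lem:tightness_for_the_action}. The only cosmetic difference is your countable separating family of exponentials in place of all of $C_b(\R_+)$; since the criterion asks for a family closed under addition you would formally pass to finite sums of these, which are still covered by Lemma \ref{lem:tightness_for_the_action}, so nothing is lost.
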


\begin{proof}[Proof of Lemma \ref{lem:tightness_for_the_measures}]
We will use Theorem \ref{th:jakubowski_criteria} in the Appendix  to prove this result. To check condition $(i)$, we find a suitable compact set $W\in \mathcal{M}_{\le1}(\R_+)$, where $\mathcal{M}_{\le1}(\R_+)$ is the set of all sub-probability measures on $\R_+$, which is a separable, compact metric space, and therefore a completely regular topological space. Any closed subset $W$ of $\mathcal{M}_{\le1}(\R_+)$ will be compact with respect to the topology induced by the weak convergence of measures, and it will be metrizable as a subset of a metric space. 

We define for some positive constant $C$ the set 
\[ W_C:=\left\{ \tau\in \mathcal{M}_1(\R_+)\,:\, \int_{\R_+} x\,  \tau(dx) \leq C \right\}, \]
which is closed (and therefore compact). Assume that $\{ \tau_n \}_{n \in \N}$ is a sequence of measures in $W_C$ that converge weakly to $\tau$. Then for any $M \in \R_+$
\[
\int_{\R_+} x \tau(dx) = \lim_{M \to \infty} \int_{\R_+} (x \wedge M) \tau(dx) =  \lim_{M \to \infty}  \lim_{n \to \infty} \int_{\R_+} (x \wedge M) \tau_n(dx) \le 
\lim_{n \to \infty} \int_{\R_+} x \tau_n(dx) \le C,
\] 
and therefore the limit point $\tau$ is also in $W_C$.

In our case, from the conservation of the total mass (wealth) and the fact that we can find a $c_1$ so that $W_N < c_1N$, for all $N \in \N$, we have 
$$\int_{\R_+} x \mu_t^N(dx) = \frac{1}{N}\sum_{i=0}^N X_t^{i,N} =\frac{1}{N}\sum_{i=0}^N X_0^{i, N} = \int_{\R_+} x \mu_0^N(dx) \leq c_1 \quad \mbox{a.s.}$$
Consider $\lp\mathcal{P}_N\rp_{N\in\NN}$ the family of probability measures in $\mathcal{M}_1(D([0,\infty); W_{c_1}))$ which are the laws of $(\mu^N_t)_{t\in \R_+}$. We have that
\[ \mathcal{P}_N(D([0,\infty); W_{c_1}) =1 \quad \mbox{ for all } N\in \NN. \]
This verifies condition $(i)$ of Theorem \ref{th:jakubowski_criteria}.

In order to check condition $(ii)$ we will use the family of continuous functions on $\mathcal{M}_{ \le1}(\R_+)$ defined as
$$\mathbb{F}= \{ F\, :\, \mathcal{M}_{\le1}(\R_+) \rightarrow \R \, :\, F(\tau)= \langle f, \tau \rangle \mbox{ for some } f\in C_b(\R_+)\}.$$
This family is closed under addition since $C_b(\R_+)$ is, it is continuous in $\mathcal{M}_{\le 1}(\R_+)$, and separates points in $\mathcal{M}_{\le 1}(\R_+)$: if $F(\tau)=F(\bar \tau)$ for all $F\in \mathbb{F}$ then
\[ 
\int_{\R_+} f(x) d(\tau-\bar \tau) (x) =0 \quad \forall f\in C_b(\R_+) 
\]
 hence $\tau\equiv\bar\tau$, since we can approximate indicator functions for any Borel set $A$ using functions from $C_{b}(\R_+)$. 
So we are left with proving that for every $f\in C_b(\R_+)$ the sequence $\{\langle f, \mu^N \rangle\}_{N\in \NN}$ is tight. This was proven in Lemma \ref{lem:tightness_for_the_action}. 
\end{proof}

Now Proposition \ref{prop:almost_sure_convergence_for_measures} follows immediately.

\begin{proof}[Proof of Proposition \ref{prop:almost_sure_convergence_for_measures}] The result follows from
Lemma \ref{lem:tightness_for_the_measures} and  Prokhorov's theorem. 
\end{proof}

\bigskip
%

To prove Proposition \ref{prop:convergence_trilinear_term} we need the following three lemmas. Throughout we are assuming that $\{ \mu^{N_k} \}$ is a converging sequence in the space $D([0, \infty); \mathcal M_1(\R_+))$.

\begin{lemma}[Continuity of the limit]
\label{lem:continuity_of_limit}
The weak limit of $(\mu^{N_k}_t)_{t\geq 0}$ as $k\rightarrow\infty$  is continuous in time a.e..
\end{lemma}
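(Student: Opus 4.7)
The plan is to exploit the fact that the empirical measure $\mu^{N_k}_t$ can only change by a small amount at each jump, so in the limit $k\to\infty$ the path becomes continuous. The starting observation is that each jump of the process corresponds to one binary interaction, which deletes two atoms of the empirical measure $\mu^{N_k}$ and inserts two new ones, each of mass $1/N_k$. Hence
\[
\|\mu^{N_k}_t - \mu^{N_k}_{t-}\|_{TV}\le \tfrac{4}{N_k},
\]
and in particular, for every $f\in C_b(\R_+)$,
\[
\sup_{t\ge 0}\bigl|\la f,\mu^{N_k}_t\ra - \la f,\mu^{N_k}_{t-}\ra\bigr| \le \tfrac{4\|f\|_\infty}{N_k}\;\xrightarrow[k\to\infty]{}\;0.
\]

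Next, since the evaluation functional $\pi_f:\mathcal M_1(\R_+)\to\R$, $\pi_f(\nu)=\la f,\nu\ra$, is continuous for any $f\in C_b(\R_+)$, the continuous mapping theorem (applied coordinate-wise) together with the weak convergence $\mu^{N_k}\Rightarrow\mu$ in $D([0,\infty);\mathcal M_1(\R_+))$ yields the convergence $\la f,\mu^{N_k}_{\,\cdot\,}\ra \Rightarrow \la f,\mu_{\,\cdot\,}\ra$ in $D([0,\infty);\R)$. By Skorokhod's representation theorem we may, on a suitable probability space, arrange for this convergence to be almost sure, and with it the existence of time-change maps $\lambda_k$ (continuous increasing bijections of $[0,\infty)$ with $\lambda_k(t)\to t$ uniformly on compacts) such that $\la f,\mu^{N_k}_{\lambda_k(t)}\ra \to \la f,\mu_t\ra$ uniformly on compacts.

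The standard jump-preservation lemma for the Skorokhod topology then does the work: if $y_k\to y$ uniformly on compacts with $y_k,y$ c\`adl\`ag, and $\sup_t|y_k(t)-y_k(t-)|\to 0$, then for every $t$
\[
|y(t)-y(t-)|\le 2\|y_k-y\|_{\infty,[0,T]}+\sup_s|y_k(s)-y_k(s-)|\longrightarrow 0,
\]
so $y$ is continuous. Applying this to $y_k(t)=\la f,\mu^{N_k}_{\lambda_k(t)}\ra$ (whose jumps have the same sizes as those of $\la f,\mu^{N_k}_\cdot\ra$, hence tend to $0$ uniformly), we conclude that $t\mapsto\la f,\mu_t\ra$ is continuous for every $f\in C_b(\R_+)$.

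Picking a countable family $\{f_j\}\subset C_b(\R_+)$ that is convergence-determining for weak convergence on $\mathcal M_1(\R_+)$ (for example, a countable dense subset of the bounded Lipschitz functions), simultaneous continuity of $t\mapsto\la f_j,\mu_t\ra$ for all $j$ implies weak continuity of $t\mapsto\mu_t$ for every $t$, almost surely. The only real obstacle is justifying the Skorokhod-to-uniform passage on the infinite-dimensional space $\mathcal M_1(\R_+)$; this is handled by testing against the countable family $\{f_j\}$ and invoking Skorokhod's representation coordinate-wise (alternatively, by working in the compact set $W_{c_1}\subset\mathcal M_1(\R_+)$ of Lemma \ref{lem:tightness_for_the_measures}, which is metrisable for weak convergence, and observing that the metric is dominated by a convergent series of the $|\la f_j,\cdot\ra|$).
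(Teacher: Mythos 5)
Your proposal is correct and follows essentially the same route as the paper: the key input in both is the jump bound $|\la f,\mu^{N_k}_t\ra-\la f,\mu^{N_k}_{t-}\ra|\le 4\|f\|_\infty/N_k$, after which the paper simply cites the Ethier--Kurtz continuity criterion (Theorem \ref{th:continuity_criteria_limit_Skorokhod_space}) while you reprove that criterion by hand via the Skorokhod representation and the standard jump-preservation estimate. Your explicit treatment of the final step --- upgrading continuity of $t\mapsto\la f_j,\mu_t\ra$ for a countable separating family to continuity of the measure-valued path, e.g.\ by working in the compact metrisable set $W_{c_1}$ --- fills in a detail the paper leaves implicit, but it is not a different argument.
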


\begin{proof}[Proof of Lemma \ref{lem:continuity_of_limit}]
We have that for any $f\in C_b(\R_+)$
\[ |\langle f, \mu^{N_k}_t \rangle- \langle f, \mu^{N_k}_{t-}\rangle | \leq\frac{4}{N_k}  \|f\|_{\infty}, \]
when a jump happens in the process only the wealth of two individuals is altered.
 Then we may apply Theorem \ref{th:continuity_criteria_limit_Skorokhod_space} of the Appendix to obtain that $\la f, \mu_t\ra$ is continuous for any $f\in C_b(\R_+)$ and this implies the continuity of $(\mu_t)_{t\geq0}$. 
\end{proof}

\begin{lemma}[Uniform convergence]
\label{lem:uniform_convergence_in_time}
For all $f\in C_b(\R_+)$,  and finite $t\ge 0$ we have
$$\sup_{s\leq t}|\la f, \mu^{N_k}_s-\mu_s\ra|\to 0 \quad
\mbox{weakly}$$
as $k\to\infty$. 
\end{lemma}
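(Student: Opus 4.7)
The plan is to combine the continuous mapping theorem with the well-known fact that Skorokhod convergence to a continuous limit automatically upgrades to uniform convergence on compact time intervals. This converts the measure-valued convergence from Proposition \ref{prop:almost_sure_convergence_for_measures} into the real-valued uniform convergence claimed.

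First, I would fix $f \in C_b(\R_+)$ and observe that the map $\Phi_f \colon \mathcal{M}_1(\R_+) \to \R$ defined by $\Phi_f(\nu) = \la f, \nu\ra$ is continuous with respect to the weak topology on $\mathcal{M}_1(\R_+)$. Composition with paths then gives a measurable map $D([0,\infty); \mathcal{M}_1(\R_+)) \to D([0,\infty); \R)$ sending $(\nu_s)_{s\ge 0}$ to $(\la f, \nu_s\ra)_{s\ge 0}$, and this composition map is continuous on the set of paths whose images lie in a metric space where $\Phi_f$ is continuous (which is our situation). Proposition \ref{prop:almost_sure_convergence_for_measures} gives $\mu^{N_k} \Longrightarrow \mu$ in $D([0,\infty); \mathcal{M}_1(\R_+))$, so the continuous mapping theorem yields
\[
\la f, \mu^{N_k}_\cdot\ra \Longrightarrow \la f, \mu_\cdot\ra \quad \text{weakly in } D([0,\infty); \R).
\]

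Next I would invoke Skorokhod's representation theorem on a rich enough probability space to realise this weak convergence as almost-sure convergence in the Skorokhod topology of $D([0,\infty); \R)$. By Lemma \ref{lem:continuity_of_limit}, the limit path $s \mapsto \la f, \mu_s\ra$ is almost surely continuous. A classical theorem in Skorokhod space theory (see e.g.\ Billingsley, \emph{Convergence of Probability Measures}, or Ethier--Kurtz, Ch.~3) states that if $x_n \to x$ in the Skorokhod topology on $D([0,\infty); \R)$ and $x \in C([0,\infty); \R)$, then $x_n \to x$ uniformly on every compact interval $[0,t]$. Applying this pathwise on the new probability space gives
\[
\sup_{s\leq t}\bigl|\la f, \mu^{N_k}_s\ra - \la f, \mu_s\ra\bigr| \longrightarrow 0 \quad \text{almost surely}.
\]

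Finally, transferring back to the original probability space, almost-sure convergence implies convergence in distribution, which is the weak convergence asserted in the lemma.

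I do not expect any serious obstacle, as each ingredient is standard; the only point requiring mild care is confirming that the Skorokhod-to-uniform upgrade still applies here because the limit path is continuous, and that the composition with $\Phi_f$ is continuous as a map between the Skorokhod spaces. Both follow from the continuity of $\Phi_f$ and the general principle that the Skorokhod and uniform topologies coincide on $C([0,t]; \R)$.
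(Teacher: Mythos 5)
Your argument is correct and is essentially the paper's own proof: both rest on Lemma \ref{lem:continuity_of_limit} (continuity of the limit path) together with the continuous mapping theorem in the Skorokhod space, the point being that the supremum functional is continuous at continuous paths. Your version is slightly more careful than the paper's one-line justification, since you make explicit (via the Skorokhod representation and the Skorokhod-to-uniform upgrade on compacts) why continuity of the limit is needed for the sup functional to be continuous there.
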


\begin{proof}[Proof of Lemma \ref{lem:uniform_convergence_in_time}]
By Lemma \ref{lem:continuity_of_limit}, the limit of $(\mu_t^{N_k})_{N\in \NN}$ is continuous in time. The statement is consequence of the Continuous Mapping Theorem in the Skorokhod space  and the fact that $g(X)(t)=\sup_{s\leq t} |X|$ is a continuous function in this space. 
\end{proof}

\begin{lemma}
\label{lem:limit_counting_measures}
For all $f\in C_b(\R_+)$,  and finite $t\ge 0$ we have
$$\sup_{s\leq t} |\langle f, Q^{(N_k)}(\mu^{N_k}_s)- Q(\mu_s)\rangle| \rightarrow 0 \quad \mbox{weakly}$$
as $k\rightarrow \infty$.
\end{lemma}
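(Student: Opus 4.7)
The plan is to fix $f\in C_b(\R_+)$ and $t\ge 0$, set
\[
\phi(x,y,r):=f(r(x+y))+f((1-r)(x+y))-f(x)-f(y),
\]
which is jointly continuous in $(x,y,r)$ and bounded by $4\|f\|_\infty$, and then decompose
\[
\la f, Q^{(N_k)}(\mu^{N_k}_s) - Q(\mu_s)\ra = A_1(s)+A_2(s)+A_3(s),
\]
where $A_1(s)$ replaces the pair measure $\mu^{(2,N_k)}_s$ by the product measure $\mu^{N_k}_s\otimes \mu^{N_k}_s$, $A_2(s)$ replaces $\mathbbm{1}_{\{x+y\le W_{N_k}\}}$ by $\mathbbm{1}_{\{x+y\le w_0\}}$, and $A_3(s)$ is the residual $\int_0^1\iint \phi\,\mathbbm{1}_{\{x+y\le w_0\}}\,d(\mu^{N_k}_s\otimes\mu^{N_k}_s - \mu_s\otimes\mu_s)\,dr$. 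The goal will be to bound each $A_i(s)$ uniformly in $s\in[0,t]$ and let $k\to\infty$.

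For $A_1$, I will use \eqref{eq:pairmeasure} directly: the signed measure $\mu^{(2,N)}-\mu^N\otimes\mu^N$ is the pushforward $-\tfrac{1}{N}d_*\mu^N$ supported on the diagonal $\{x=y\}$, with total variation exactly $1/N$. Consequently $\sup_{s\le t}|A_1(s)|\le 4\|f\|_\infty/N_k\to 0$ deterministically. For $A_3$, I will invoke the weak convergence $\mu^{N_k}_s\Rightarrow \mu_s$ from Proposition \ref{prop:almost_sure_convergence_for_measures}, which gives $\mu^{N_k}_s\otimes \mu^{N_k}_s \Rightarrow \mu_s\otimes \mu_s$ on $\R_+^2$. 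Since $\phi(\cdot,\cdot,r)\mathbbm{1}_{\{x+y\le w_0\}}$ is bounded and continuous off the set $\{x+y=w_0\}$, the Portmanteau theorem gives pointwise-in-$s$ convergence of $A_3(s)$, provided $\mu_s\otimes \mu_s(\{x+y=w_0\})=0$. To upgrade pointwise-in-$s$ to uniform-in-$s$ convergence I will use Skorokhod's representation to realise $\mu^{N_k}\to\mu$ almost surely in $D([0,\infty);\mathcal{M}_1(\R_+))$, combined with the continuity of $s\mapsto\mu_s$ from Lemma \ref{lem:continuity_of_limit}, so that the Skorokhod distance collapses to uniform distance on $[0,t]$; integration in $r\in[0,1]$ is then a bounded-convergence step.

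The main obstacle is the term $A_2$, which carries the discontinuous indicator difference $\mathbbm{1}_{\{W_{N_k}<x+y\le w_0\}}$. When $w_0=\infty$, I will exploit conservation of the first moment $\la x,\mu^{N_k}_s\ra$ (preserved by pairwise exchange), so by Markov's inequality all but an $\eta$-mass of $\mu^{N_k}_s\otimes\mu^{N_k}_s$ lies in a cube $[0,K]^2$; since $W_{N_k}\to\infty$, eventually $W_{N_k}>2K$ and the indicator difference vanishes on this cube, leaving an $O(\eta)$ remainder that can be made arbitrarily small. When $w_0<\infty$, conservation of support places $\mu^{N_k}_s$ inside $[0,w_0]$; in the principal applied setting where $W_{N_k}\equiv w_0$ the term $A_2$ vanishes identically, while for monotone $W_{N_k}\uparrow w_0$ the mass of the thin strip $\{W_{N_k}<x+y\le w_0\}$ under $\mu^{N_k}_s\otimes\mu^{N_k}_s$ is controlled via the uniform weak convergence already established, reducing the problem to showing $\mu_s\otimes\mu_s(\{x+y=w_0\})=0$, which I will handle by a soft truncation/Portmanteau argument using the weak convergence of the total-wealth functional.

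Combining the three estimates yields $\sup_{s\le t}|\la f, Q^{(N_k)}(\mu^{N_k}_s)-Q(\mu_s)\ra|\to 0$ along the chosen subsequence, which by Skorokhod's representation is equivalent to the desired weak convergence to $0$. The hardest technical point is the indicator discontinuity in $A_2$, and the bookkeeping needed to make every estimate uniform in $s\in[0,t]$; everything else is routine weak-convergence analysis on the product space $\R_+^2$.
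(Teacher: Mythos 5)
Your plan follows essentially the same route as the paper's proof, but with a finer and more honest decomposition. The paper splits the difference into only two pieces: (i) $\la f,(Q-Q^{(N)})(\mu^N_s)\ra$, controlled deterministically by $\bigl|\int d(\mu^N_s\otimes\mu^N_s-\mu^{(2,N)}_s)\bigr|\le 1/N$, which is exactly your $A_1$; and (ii) $\la f, Q(\mu^N_s)-Q(\mu_s)\ra$, handled by weak convergence of the product measures $\mu^{N}_s\otimes\mu^{N}_s\Rightarrow\mu_s\otimes\mu_s$ together with continuity of the limit in time, ``by an argument analogous to Lemma \ref{lem:uniform_convergence_in_time}'' --- this is your $A_3$. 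Your middle term $A_2$, isolating the swap of $\mathbbm1_{\{x+y\le W_{N_k}\}}$ for $\mathbbm1_{\{x+y\le w_0\}}$, is in the paper absorbed into a one-line chain of inequalities inside step (ii) without justification; your treatment of it via conservation of $\la x,\mu^{N}_s\ra$ and Markov's inequality when $w_0=\infty$ is precisely the missing estimate (namely $\mu^{N}_s\otimes\mu^{N}_s\{x+y> W_{N_k}\}\le 2\la x,\mu^{N}_0\ra/W_{N_k}$, uniformly in $s$), so on this point your argument is tighter than the published one. One caveat you share with the paper: the Portmanteau step in $A_3$, and your strip argument for $A_2$ when $w_0<\infty$, require $\mu_s\otimes\mu_s(\{x+y=w_0\})=0$, and at this stage $\mu_s$ is only known to be a time-continuous weak limit point, so it could in principle charge the line $x+y=w_0$ (for instance by carrying atoms at $0$ and at $w_0$); your ``soft truncation/Portmanteau'' remark does not yet close this, but the paper does not address it at all, so this is not a defect relative to the source. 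The remaining ingredients --- Skorokhod representation, continuity of $t\mapsto\mu_t$ to upgrade pointwise to uniform convergence on $[0,t]$, and bounded convergence in $r$ --- match the paper's intent exactly.
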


\begin{proof}[Proof of Lemma \ref{lem:limit_counting_measures}]
We abuse notation and denote by $(\mu^N_t)_{N\in \NN}$ the convergent subsequence.
The result will manifest itself when we show that for  all $f\in C_b(\R_+)$:
\begin{itemize}
	\item[(i)] $\sup_{s\leq t}|\langle f, \lp Q-Q^{(N)}\rp (\mu^{N}_s)\rangle| \rightarrow 0$ as $N\rightarrow \infty$,
	\item[(ii)] $\sup_{s\leq t}\left|\langle f,  Q\lp \mu^N_s \rp -Q\lp\mu_s\rp \rangle\right| \rightarrow 0$ as $N\rightarrow \infty$. 
\end{itemize}
We will use the fact that the product measures also converge weakly, i.e.
$\mu^N_t \otimes \mu^N_t \Longrightarrow \mu_t \otimes \mu_t$.

Item (i) is then a consequence of 
\begin{align} \nonumber
&|\langle f, \lp Q-Q^{(N)}\rp (\mu^{N}_s)\rangle| \\
&\le \left|\int_{[0,1]}\int_{\R_+}\int_{\R_+}[f(r(x+y)) + f((1-r)(x+y)) -f(x) - f(y) ] d (\mu^N_s\otimes\mu^N_s - \mu^{(2,N)}_s ) dr\right|\notag\\
&\leq 4 \| f\|_{\infty} \left| \int_{\R_+^2} d (\mu^N_s\otimes\mu^N_s - \mu^{(2,N)}_s )  \right| =  4 \| f\|_{\infty}  \left| \frac{1}{N} \int_{\R_+} d\mu^N_s   \right| \leq\frac{4}{N}\|f\|_{\infty} . \label{eq:4bound_difference_trilinear_term}
\end{align}
The bound is true for any $s$ and therefore for the supremum up to a finite time as well.
Now for (ii), we compute 
\begin{align} \nonumber
\sup_{s\leq t}&\left|\langle f,  Q( \mu^N_s) -Q(\mu_s) \rangle\right| \\
 &\leq \sup_{s\leq t}  \int_{[0,1]}\int_{\R^2_+}  \left|f(r(x+y)) +f((1-r)(x+y)) - f(x) - f(y) \right| \notag \\
& \phantom{xxxxxxxxxxxxx}\left| \mathbbm1_{\{x+y \leq W_N\}}\mu^N_s(dx)\mu^N_s(dy)-\mathbbm1_{\{x+y \leq w_0\}}\mu_s(dx)\mu_s(dy)\right|\, dr \notag\\
 &\le 4\| f \|_{\infty}  \sup_{s\leq t} \int_{\R^2_+}\left| \mathbbm1_{\{x+y \leq W_N\}}\mu^N_s(dx)\mu^N_s(dy)-\mathbbm1_{\{x+y \leq w_0\}}\mu_s(dx)\mu_s(dy)\right| \notag\\
 &\le 4\| f \|_{\infty}  \sup_{s\leq t} \int_{\R^2_+}\left| \mathbbm1_{\{x+y \leq w_0\}}\mu^N_s(dx)\mu^N_s(dy)-\mathbbm1_{\{x+y \leq w_0\}}\mu_s(dx)\mu_s(dy)\right| \notag\\
  &\le 4\| f \|_{\infty}  \sup_{s\leq t} \int_{\R^2_+}\left|\mu^N_s(dx)\mu^N_s(dy)- \mu_s(dx)\mu_s(dy)\right|.
\label{eq:4bound_trilinear_term_uniformly}
\end{align} 
We conclude $(ii)$  with an argument analogous to Lemma \ref{lem:uniform_convergence_in_time} applied to the function $f =1$. 
\end{proof}

\begin{proof}[Proof of Proposition \ref{prop:convergence_trilinear_term}]
By Lemma \ref{lem:limit_counting_measures} we can pass the limit inside the time integral.  
\end{proof}

We are now in position to prove Theorem \ref{th:hydrodynamic_limit}:
	
\begin{proof}[Proof of Theorem \ref{th:hydrodynamic_limit}]
	
The weak form of item $(A)$ is proven in Lemma  \ref{lem:uniform_convergence_in_time}, item $(B)$ is proven  in Proposition \ref{prop:convergence_martingale}, and item $(C)$ is the content of Proposition \ref{prop:convergence_trilinear_term}. Since all those weak convergences in the previous propositions were to 0, they can be upgraded to convergence in probability.

Then (and also by using the assumptions of the theorem) we have that for any $f \in C_b(\R_+)$ and any converging subsequence of measures,
\begin{align*}
0 &\stackrel{\mathcal D}{=} \lim_{N\to \infty}  M_t^{g, N}  \\
&=   \lim_{N\to \infty} \langle  g, \mu^N_t \rangle -  \langle g, \mu^N_0\rangle - \int_0^t \langle g, Q^{(N)}(\mu^N_s) \rangle ds \stackrel{\mathcal D}{=} \langle  g, \mu_t \rangle -  \langle  g, \mu_0\rangle - \int_0^t \langle  g, Q(\mu_s) \rangle,  
\end{align*}
and therefore the limit of the subsequence of measures  must satisfy equation  \eqref{eq:kinetic_equation}. 
Using the uniqueness of the kinetic equation \eqref{eq:kinetic_equation}, we have that all the convergent subsequences from Proposition \ref{prop:almost_sure_convergence_for_measures} converge to the same limit. Hence the whole sequence converges (if a tight sequence has every weakly convergent subsequence converging to the same limit, then the whole sequence converges weakly to that limit \cite{Billingsley}). 
	
Now, we have that the weak limit of $(\mu_t^N)_{N\in\NN}$ satisfies the kinetic equation \eqref{eq:kinetic_equation} (thanks to Prop. \ref{prop:convergence_martingale}, \ref{prop:convergence_trilinear_term}), so it is deterministic. Therefore, we actually have convergence in probability.
\end{proof}

\section{Invariant measures for the mean field limit}
\label{sec:sara}

In this section we discuss the invariant measures. 

\begin{proof}[Proof of Proposition \ref{cor:delta}]
If $\la x, \mu_0^N\ra \to 0$ as $N\to\infty$, by positivity of the support of the measures  and conditions \eqref{eq:bound_initial_data}- \eqref{eq:limit_initial_data}, it follows that
$$\la x, \mu_0\ra =0.$$
On the other hand,  $\mu_0$ is a probability measure, so the above implies that $\mu_0(x) = \delta_0(x)$. Then it follows that $\mu_t(x)=\delta_0(x)$ since we already argued that the delta distribution is an invariant solution of equation \eqref{eq:kinetic_equation}.
\end{proof}

\begin{proof}[Proof of Proposition \ref{lem:equilibria}]
This proof does not need the technicalities associated with martingales, as the initial distributions of the process are invariant, and every time an interaction event occurs their distribution remains unchanged. The theorem can be proven in a direct way, without even the Poissonisation trick. 

Consider a continuous function $g$ on $[0,1]$ and assume that $\|g\|_{\infty} \le B$. 
Let $\e > 0$ and select a $\delta > 0$ so that $\delta < \e/2 \wedge B$. Furthermore assume  that 
$N$ is large enough so that for a fixed $\beta$,  $0 < \beta < 1$ we have that 
\[
\sup_{x \in [0, N^{-\beta}]}|g(0) - g(x)| < \delta.
\]  
In order to prove the result we just need to show that $\langle g, \mu^N_0 \rangle \to g(0)$ as $N \to \infty$. We will show that this happens $\P$- a.s., when $\P =  \otimes_{N = 2}^{\infty}\mu^{\infty, N}_0$ the product measure on the space $\otimes_{N = 2}^{\infty} \Delta_{N-1}.$ 

We have that $\langle g,  \mu^N_0 \rangle = N^{-1} \sum_{i=1}^N g(X^{N_0}_i)$, so for the $\P-$ a.s.\ convergence we estimate 
\begin{align*}
\P\Big\{ \Big| \frac{1}{N} &\sum_{i =1}^N g(X_i)- g(0)\Big| > \e \Big\} = \P\Big\{ \Big| \sum_{i =1}^N (g(X_i)- g(0))\Big| > N\e \Big\}\\ 
&\le \P\Big\{ \sum_{i =1}^N \big| g(X_i)- g(0)\big| > N\e \Big\}\\ 
&\le e^{-\varepsilon  N} \E\Big(\exp\Big\{ \sum_{i =1}^N \big| g(X_i)- g(0)\big| \Big\}\Big)\\
&= e^{-\varepsilon  N} \E\Big(\exp\Big\{ \sum_{i =1}^N \big| g(X_i)- g(0)\big| \Big\}\sum_{ I \subseteq [N] } \mathbbm1\{ X_i \ge N^{-\beta}, i \in I  \}  \mathbbm1\{ X_i < N^{-\beta}, i \notin I  \}\Big)\\
&=  e^{-\varepsilon  N} \E\Big(\sum_{ I \subseteq [N] } e^{\sum_{i \in I} | g(X_i)- g(0)|}\mathbbm1\{ X_i \ge N^{-\beta}, i \in I  \} e^{\sum_{i \notin I} | g(X_i)- g(0)|} \mathbbm1\{ X_i < N^{-\beta}, i \notin I  \}\Big)\\
&\le  e^{-\varepsilon  N} \E\Big(\sum_{ I \subseteq [N] } e^{2 B|I|}\mathbbm1\{ X_i \ge N^{-\beta}, i \in I  \} e^{(N - |I|) \delta} \mathbbm1\{ X_i < N^{-\beta}, i \notin I  \}\Big)\\
& \le  e^{-\varepsilon  N} \sum_{ k = 0}^N {N \choose k} e^{2 Bk + (N - k) \delta }\E\big(\mathbbm1\{ X_i \ge N^{-\beta} \text{ for $k$ indices}  \}\big).
\end{align*}
The last line has the simplified sum index because of exchangeability of the coordinates, and it is an upper bound, because we dropped the second indicator function. 
Before proceeding with the calculation, we just bound the last expectation when $k$ is not zero. Note that if $k > [N^{1-\beta}]$, the indicator inside is identically zero, otherwise the total wealth cannot be one.  
We also restrict the index of summation to $[N^{1-\beta}]$ as the indicator vanishes otherwise.
\begin{align}
\P\Big\{ \Big| \frac{1}{N} \sum_{i =1}^N g(X_i)- g(0)\Big| > \e \Big\} &\le e^{(\delta-\varepsilon) N} \sum_{ k = 0}^{[N^{1-\beta}]} {N \choose k} e^{(2 B - \delta) k} \notag\\
&\le e^{-\varepsilon N /2} N^{1-\beta} {N \choose [N^{1-\beta}]} e^{(2 B - \delta) N^{1-\beta}}. \label{eq:summable}
\end{align}
The last line follows because eventually $\delta$ will vanish and the exponent $(2B -\delta)$ will be eventually positive. therefore the maximum term in the sum is the last one, when $k = [N^{1-\beta}]$ as combinations are also increasing until around $N /2$. 
Finally, one can use Stirling's formula to see that asymptotically there exists a constant $c$ so that 
\[
 {N \choose [N^{1-\beta}]} \sim e^{cN^{1-\beta}}.
\]
Therefore the upper bound in equation \eqref{eq:summable} is summable over $N$. 
A final application of the Borel-Cantelli lemma completes the proof. 
\end{proof}

In the remaining part of this subsection, we discuss invariant measures  that are absolutely continuous with respect to  the Lebesgue measure on $\R_+$. The blanket assumption  is that for each $t \ge 0$, is that there exists a probability density function $f_t$ so that 
 \[ \mu_t(x) =  f_t(x) \,dx, \] 
and we can find invariant measures with this property. We will show that one family of such measures can be obtained as limits of the empirical measures and (it is therefore a true invariant measure) that the other family cannot and therefore the kinetic equation \eqref{eq:kinetic_equation} does give extraneous solutions.   

The first step is to find the restriction of the operator $Q$ to the class of absolutely continuous measures, which we will call $\bar Q$.
Using $\bar Q$, we can formally write an equation for the evolution of the assumed densities $f_t$. Assume that $\displaystyle \lim_{N \to \infty} W_N = w_0 \in (0, \infty]$. For any value of $w_0$ we will denote the restricted operator by $\bar Q_{w_0}$, and $\bar Q_{w_0}$ acts on probability densities $f$ on $\R_+$. In other words, for any $g \in C_b(\R_+)$
\[\langle g, \, Q(\mu) \rangle = \langle g,\, \bar Q_{w_0}(f) \rangle, \quad \text{ whenever } \mu (x) = f(x) \, dx. \]
First notice that when the measure $\mu$ has a density $f$ we can write
\begin{align*}
\int_0^1\int_{\R^+}\int_{\R^+} g((1-r)(x+y)) &\mathbbm1_{\{x+y\leq w_0\}}f(x)f(y)dx\,dy\,dr \\
&=\int_0^1\int_{\R^+}\int_{\R^+}  g(r(x+y)) \mathbbm1_{\{x+y\leq w_0\}}f(x)f(y) dx\,dy\,dr
\end{align*}
with a change of variables $r \mapsto 1-r$. Therefore, expression \eqref{eq:operator_Q} can we rewritten as
\be \label{eq:rest}
\la g,  \,Q(\mu) \ra = \int_{[0,1]}\int_{\R_+^2}[2 g(r(x+y))-  g(x)-g(y)]\mathbbm1_{\{x+y\leq w_0\}}f(x)f(y)dx\,dy\,dr.
\ee
Now it follows that 
\begin{align*}
\int_{[0,1]}\int_{\R_+^2} g(r(x+y))&\mathbbm1_{\{x+y\leq w_0\}}  f(x) f(y) dx\,dy\,dr \\
 &=\int_{[0,1]}\int_{\R_+^2} g(u+p)\mathbbm1_{\{u+p\leq r w_0\}} f(u/r) f(p/r) du\, dp\,\frac{dr}{r^2}\\
&=\int_{[0,1]}\int_{\R^2_+} g(z) \mathbbm1_{\{z\geq p\}}\mathbbm1_{\{z\leq r w_0\}} f((z-p)/r) f(p/r) dz\,dp\frac{dr}{r^2}\\
&= \int_{[0,1]}\int_{\R^2_+} g(x) \mathbbm1_{\{x\geq y\}} \mathbbm1_{\{x\leq r w_0\}}f((x-y)/r) f(y/r) dx\,dy\frac{dr}{r^2}
\end{align*}
where in the first equality we made the change of variables $rx=u$, $ry=p$; in the second equality we made the change of variables $z=u+p$; in the last equality we just changed the name of the labels $z=x$, $p=y$. With similar computations, we obtain that
\[ \int_{\R_+}\int_{\R_+}g(x) \mathbbm1_{\{x+y\leq w_0\}} f(x) f(y) dx\,dy= \int_{\R_+} g(x) f(x) \lp \int_{0}^{(w_0-x)_+} f(y) dy \rp dx, \]
where $(w_0-x)_+= (w_0-x)\mathbbm1_{(w_0-x)\geq 0}$.

Combine these calculations into \eqref{eq:rest} to obtain that $\bar Q_{w_0}$ is given by 
\be \label{eq:kinetic_Q}
\bar Q_{w_0}(f):= 2 \int^{1}_{x/w_0} \int_0^x f\lp \frac{y}{r}\rp f\lp\frac{x-y}{r}\rp\, dy \frac{dr}{r^2}- 2 f(x) \int_0^{(w_0-x)_+}\!\!\!\!\!\!\!\!\!\!\!\!f(y) dy,
\ee
Similarly, the evolution of the density functions can be obtained (in a weak sense) from 
\beqarl
\label{eq:kinetic_f}
f_t &=& f_0 + \int^t_0 \bar Q_{w_0}(f_s) \ ds.
\eeqarl
Note that when $w_0<\infty$ and $x>w_0$, then $\bar Q_{w_0}(f)(x)=0$.
When $w_0=\infty$ the operator $\bar Q_\infty$ reads
	\be \label{eq:Qinf}
	\bar Q_\infty(f) = 2 \int^1_0 \int^x_0 f\lp \frac{y}{r}\rp f\lp \frac{x-y}{r}\rp dy \frac{dr}{r^2}- 2f(x),
	\ee
since $f(x)$ is a probability density.
In order to prove Corollary \ref{cor:kinetic_and_equilibria}, it suffices to show that the proposed equilibria annihilate $\bar Q$. We re-state the corollary, using this observation.

\begin{corollary}
Let $\displaystyle \lim_{N \to \infty} W_N = w_0 \in (0, \infty]$. Assume that $\mu_t$ is a solution of \eqref{eq:kinetic_equation} which has has a density $f_t$ for all $t$,  
satisfying equation \eqref{eq:kinetic_f}
Then 
\begin{enumerate}
\item If $w_0 = \infty$, the exponential distributions 
\be 
\label{expagain}
\tilde f(x) = \frac{e^{-x/m}}{m},
\ee
are equilibria for the operator $\bar Q_{\infty}$, (i.e. $\bar Q_{\infty}(\tilde f) = 0$) and remain invariant under \eqref{eq:kinetic_f}. In particular, if $f_0$ is of the form \eqref{eq:equilibria} with
$\la x, f_0 \ra = m_0>0,$ then the distribution \eqref{eq:equilibria} with $m= m_0$ is a stationary solution of \eqref{eq:kinetic_f}. 

\item If $0<w_0<\infty$, then the following distributions are compactly supported in $[0,w_0]$ and are equilibria for the operator $\bar Q_{w_0}$
\be 
\tilde f(x)  = \frac{e^{-x/m}}{m(1-e^{-w_0/m})} \mathbbm1_{\{ x\leq w_0\}}.
\ee
\item (Uniqueness of the invariant family at $w_ 0 = \infty$) Moreover, under the extra assumption that the density $f_t$ is differentiable on $\R_+$, then measures with density \eqref{eq:equilibria} or, equivalently, \eqref{expagain} are the unique equilibria of $\bar Q_{\infty}$. 
\end{enumerate}
\end{corollary}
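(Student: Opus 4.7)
For parts (1) and (2), the plan is simply to substitute the candidate density $\tilde f$ into the explicit expression \eqref{eq:kinetic_Q} for $\bar Q_{w_0}$ and check that the two terms cancel. In part (1), with $\tilde f(x)=e^{-x/m}/m$ and $w_0=\infty$, the memorylessness of the exponential makes the integrand $\tilde f(y/r)\tilde f((x-y)/r)=m^{-2}e^{-x/(mr)}$ independent of $y$, so the inner integral in $y$ produces an extra factor of $x$. I then apply the substitution $u=1/r$, which turns the outer $r$-integral over $[0,1]$ into an exponential tail integral over $[1,\infty)$ and yields $m^{-1}e^{-x/m}=\tilde f(x)$. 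Multiplying by $2$ and subtracting $2\tilde f(x)\int_0^\infty\tilde f=2\tilde f(x)$ gives zero. For part (2) the same computation applies, but the $r$-integral now runs over $[x/w_0,1]$, so after the substitution $u=1/r$ the integral runs over $[1,w_0/x]$, producing $2C^2m(e^{-x/m}-e^{-w_0/m})$ with $C=(m(1-e^{-w_0/m}))^{-1}$. The second term in $\bar Q_{w_0}(\tilde f)$ is $-2\tilde f(x)\int_0^{w_0-x}\tilde f(y)\,dy$, and a direct evaluation shows it equals $-2C^2m(e^{-x/m}-e^{-w_0/m})$, so again the terms cancel. One should verify along the way that the indicators $\{y/r\le w_0\}$ and $\{(x-y)/r\le w_0\}$ are automatically satisfied on the domain of integration when $x\le w_0$.

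For part (3), the plan is to convert the fixed-point equation $\bar Q_\infty(f)=0$ into an ODE on the Laplace-transform side. First, in the first term of \eqref{eq:Qinf} substitute $u=y/r$ in the inner integral to obtain
\[
f(x)=\int_0^1 \frac{1}{r}(f\ast f)(x/r)\,dr,
\]
where $f\ast f$ is the ordinary convolution on $\R_+$. The further substitution $t=x/r$ gives
\[
f(x)=\int_x^\infty\frac{(f\ast f)(t)}{t}\,dt.
\]
Differentiating in $x$, which is permitted by the differentiability assumption on $f_t$, yields the functional equation $-xf'(x)=(f\ast f)(x)$. Now taking Laplace transforms, with $\widehat{f\ast f}(s)=\hat f(s)^2$ and $\widehat{-xf'}(s)=(s\hat f(s))'$ (integration by parts), produces the ODE
\[
(s\hat f(s))'=\hat f(s)^2.
\]
Setting $g(s)=s\hat f(s)$ this becomes $g'(s)=g(s)^2/s^2$, which is separable and integrates to $g(s)=s/(1-Cs)$ for some constant $C$. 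Hence $\hat f(s)=1/(1-Cs)$, and the normalization $\hat f(0)=1$ holds automatically. Positivity and integrability of $f$ force $C<0$; writing $C=-m$ with $m>0$ gives $\hat f(s)=1/(1+ms)$, which is the Laplace transform of $e^{-x/m}/m$. Inverting (uniqueness of Laplace transforms on $\R_+$) identifies $f$ with the exponential family, as required.

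The main technical obstacle will be justifying the manipulations in the uniqueness argument: exchanging differentiation with the integral $\int_x^\infty(f\ast f)(t)/t\,dt$ requires continuity and integrability of $(f\ast f)(t)/t$ near both endpoints, and the integration-by-parts identity $\widehat{xf'}(s)=-\hat f(s)-s\hat f'(s)$ requires the boundary term $[xe^{-sx}f(x)]_0^\infty$ to vanish, i.e.\ sufficient decay of $xf(x)$. The relation $-xf'(x)=(f\ast f)(x)\ge 0$ already forces $f$ to be nonincreasing, which together with $\int f=1$ yields $f(x)=o(1/x)$ and hence the needed decay. The remaining computations (solving the separable ODE, identifying the inverse Laplace transform, and ruling out $C\ge 0$ by positivity of $f$) are routine.
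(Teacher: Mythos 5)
Your proposal is correct and follows essentially the same route as the paper: direct substitution of $\tilde f$ into $\bar Q_{w_0}$ for parts (1)--(2), and for part (3) the reduction of $\bar Q_\infty(f)=0$ to the functional equation $-xf'(x)=(f\ast f)(x)$ followed by the Laplace-transform ODE $(s\hat f(s))'=\hat f(s)^2$, whose solution $\hat f(s)=(1+ms)^{-1}$ identifies the exponential family. The only cosmetic difference is that the paper obtains the intermediate identity $f(x)=\int_x^\infty (f\ast f)(t)\,t^{-1}\,dt$ probabilistically (from $U(X+Y)\stackrel{d}{=}X$ and conditioning on the sum $S=X+Y$) rather than by your change of variables, and your treatment of the differentiation and boundary-term justifications is in fact more careful than the paper's.
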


\begin{proof}[Proof of Corollary \ref{cor:kinetic_and_equilibria}]
It is straightforward to check that $\bar Q_{w_0}(\tilde f) =0$ for both $w_0 = \infty$ and $w_0 < \infty$. Also, if 
$f_0 = \tilde f$ 
with $\la x, f_0\ra =m_0$ this implies that $f_0$ is stationary solution of \eqref{eq:kinetic_f} with $m=m_0$. 
It remains to show item $(3)$. Select any invariant $f$ and for that, recall that $\bar Q_\infty(f) = 0$. Let $X, Y$ be independently distributed with density $f$, and $U$ a uniform r.v. on $[0,1]$. Start from equation \eqref{eq:rest}, and observe that a different way to write it is  
\[
0 =\langle g, \bar Q_{\infty}(f) \rangle= \langle g, Q(\mu) \rangle =2 \E_{(U, X, Y)}[ g(U(X+Y))] - 2  \E_{X}[g(X)],
\]
and therefore, the distribution of $U(X+Y)$ is the same as the distribution of $X$. If we now condition on the value of $X+ Y := S = s$, we have that the conditional distribution of $X$ given $S = s$ is that of a uniform r.v.~ on $[0, s]$. Let $f_S$ denote the density of the sum $X+Y$ and $f_{X|S}$ the conditional density of $X$ given $S =s$. We can write 
\begin{align*}
f(x) = \int_{x}^{\infty} f_{X|S}(x|s) f_{S}(s) \, ds =  \int_{x}^{\infty} \frac{1}{s} f_{S}(s) \, ds.
\end{align*}
Now use the fundamental theorem of calculus to differentiate both sides with respect to $x$ in order to obtain 
\[
f'(x) = - \frac{1}{x}f_S(x) \Longleftrightarrow  x f'(x) = - f_S(x).
\]
Take the Laplace transform of the equation above; denote by $\bar g (t) $ the Laplace transform of $g(x)$ and use basic properties on the equation in the last display, to argue that 
\[
LHS = \overline{ x f'(x)} = - \frac{d}{dt} \overline{f'(x)}(t) = -  \frac{d}{dt} (t \bar f (t) )= - \bar f  - t \frac{d \bar f}{ dt},
\] 
while the Laplace transform of the convolution that gives the density of $S$ is
\[
RHS = -(\bar f)^2.
\] 
These give rise to the differential equation 
\[
\frac{d \bar f}{\bar f( \bar f-1)} = \frac{dt}{t}.
\]
The solution to the differential equation,  for some constant $m$, is 
\[
\log \left| \frac{\bar f -1}{\bar f } \right| = \log mt.
\]
Keep in mind that since $t >0$ and $\bar f(t) < 1$, we can solve 
\[ \bar f(t)= \frac{m^{-1}}{ m^{-1} + t},\]
where we identify the Laplace transform of an exponential distribution with mean $m$.
\end{proof}

\begin{proof}[Proof of Proposition \ref{prop:2am}]
Here is the proof of the two points.
\begin{enumerate}
	\item We only need to show the convergence of the initial measures. Consider a function $g \in C_b(\R_+)$ and compute
		\begin{align*}
			\langle g, \mu^{N}_0 \rangle = \frac{1}{N} \sum_{i=1}^N g(X_i) \longrightarrow \E_{\text{Exp}(1/ m_0)}g(X_1) = \int_{0}^{\infty} g(x) \mu_0(dx),
		\end{align*}
		by the law of large numbers. This verifies the definition of weak convergence $\mu_0^N \Longrightarrow \mu_0$.  
	\item Assume the contrary, and consider a sequence of converging initial measures. Since $w_0 < \infty $, Proposition \ref{cor:delta} gives that $\mu_0^N$ should converge to $\delta_0$, which does not have a density  \eqref{eq:equilibria_bounded}. This gives the desired contradiction. \qedhere
\end{enumerate}
\end{proof}

\section{An application to partitions of integers}
\label{sec:partitions} 

The coagulation-fragmentation process is very versatile and therefore is well-studied and it can be viewed also as a process on integer partitions of integers. To be precise,
for any fixed $N \in \N$ we have that $ \sum_{i} X^{i, N}_t = W_N$. If we assume $W_N$ is an integer, we can  interpret the vector ${\bf X}^N_t$ as a random (real) partition of the integer $W_N$ and the process $\{ {\bf X}^N_t\}_{t \ge 0}$ can be viewed as a Markov chain on these partitions. Most recently, a version of the process (with deterministic binary interactions at discrete time steps) has been studied in \cite{Cha-Dia-19} in terms of its rate of convergence to the equilibrium. 

In this section we cast the results of the previous sections in terms of partitions. Conditions on the value $W_N$, or on the nature of partitions will differ based on the application. In the process we generalise or recover theorems proven in \cite{vershik2003asymptotics} about various scalings of uniform integer partitions. 

First, we discuss the case where we want only integer partitions to have mass in our process. 

\subsection{ Integer partitions, $W_N = n$ fixed, $N$ fixed.}  

In the DS-DT model, we have the process ${\bf X}_t^{n, N}$ which describes the evolution of the partition process on $\Delta_{N-1}(n)$. The state space is given by \eqref{eq:D_Nn} and therefore the process $n {\bf X}_t^{n, N}$ has state space all $N$-term integer partitions of the number $n$. 

Since the transition matrix given by  \eqref{transition} is doubly stochastic, letting time $t$ to infinity, we have that the limiting measure on these partitions is the uniform measure on $n\Delta_{N-1}(n)$, as the chain is also irreducible and aperiodic. 

\subsection{Uniform measure on integer partitions, $W_N/N \to c \in [0,\infty] $, $N \to \infty$.}  

Define a uniform initial measure on $W_N\Delta_{N-1}(W_N)$. There are many ways to construct it and we choose the following:

Consider an i.i.d.\ sequence of geometric random variables  $\{ G_i(p)\}_{i \in \N}$ with mass function
\be \label{eq:g}
\P\{  G_i = k \} = p(1- p)^k, \quad k = 0, 1,2, \ldots 
\ee
Let ${\bf u }= (u_1, \ldots u_N) \in W_N\Delta_{N-1}(W_N) $. Then sample according to the conditional measure 
\[ 
\nu^N_0({\bf u}) = P\Big\{ (G_1, \ldots, G_N ) = (u_1, \ldots, u_N) \Big| \sum_{i=1}^N G_i = W_N\Big\}.
\]
The measure $\nu^N_0$ is uniformly distributed on $W_N\Delta_{N-1}(W_N)$. This fact is irrespective of the value of the parameter $p$ and irrespective of the value of $W_N$. Assume $W_N \to \infty$. 
There are three cases to consider, based on $\lim_{N \to \infty}\frac{W_N}{N}$.   

\subsubsection{ The case $\displaystyle \limsup_{N \to \infty}\frac{W_N}{N} = 0$}

In this case,  for any $\e> 0$, and for $N$ large enough $\frac{W_N}{N} < \e$.  Let $(X_1, \ldots, X_N) \sim \nu^N_0$ and define $\mu^N_0 = \frac{1}{N} \sum_{i=1}^N \delta_{X_i}$ supported on $\R_+$. 
%
The proof of Proposition \ref{lem:equilibria} can be repeated with minor modifications, and the sequence 
$\{\mu_0^N\}_N$ converges weakly to $\delta_0$. Theorem \ref{th:hydrodynamic_limit} can now be applied.

\subsubsection{The case $\displaystyle \liminf_{N\to\infty} \frac{W_N}{N} = \infty$}
 As we mentioned after the statement of Theorem \ref{th:hydrodynamic_limit}, when the wealth grows superlinearly, the theorem does not necessarily apply. However there is a way to scale using a random approximation to $W_N$ so that the theorem works. 

In this case we fix 
\[
p_N = \frac{N}{W_N},
\]
as the success probability of each independent geometric (so the sequence refreshes with every $N$). To denote this dependence we write $G^{(N)}_i$ for each geometric. This does not alter the fact that the conditional distribution is uniform on the integer simplex $W_N\Delta_{N-1}(W_N)$. Let $\P_N$ denote the law of these geometrics. Then define 
\be \label{eq:geom}
(X_1^N, \ldots, X^W_N) = 
\begin{cases}
(1, \ldots, 1), & {\rm when } \sum_{i=1}^N G^{(N)}_i = 0,\\
N \Big( \frac{G^{(N)}_1}{\sum_{i=1}^N G^{(N)}_i}, \ldots, \frac{G^{(N)}_N}{\sum_{i=1}^N G^{(N)}_i} \Big), & {\rm otherwise.}
\end{cases}
\ee
This is a distribution on the simplex $N \Delta_{N-1}$. Conditional on the value of $\sum_{i=1}^N G^{(N)}_i = w_N$, this distribution is uniform on the discrete simplex with mesh $N/w_N$. 

\begin{lemma}\label{lem:asGn} Consider a triangular array with independent rows indexed by $N$ and random entries $\{G_i^{(N)}\}_{1\le i \le N, N \in \N}$. Each row
$
\{G_i^N\}_{1 \le i \le N} 
$
consists of i.i.d.\ geometric random variables with success probability $p_N = N/W_N$, so that $\lim_{N\to \infty}p_N = 0$. Let $\P$ denote  the law of the array. Then 
\[ \frac{\sum_{i = 1}^N G_i^{(N)}}{W_N} = 1, \quad \P-{\rm a.s.}\]
Moreover, for any $\beta < 1/2$, we can find a constant $c_{\beta}$ such that 
\be \label{eq:moder}
\P_N\Big\{ \Big| \frac{ \sum_{i = 1}^N G_i^{(N)}}{W_N} - 1 \Big|> N^{-\beta} \Big\} \le e^{-c_\beta N^{1-2\beta}}
\ee
\end{lemma}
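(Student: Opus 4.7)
The approach is a standard Chernoff argument for sums of i.i.d.\ geometrics, combined with Borel--Cantelli to upgrade from the moderate-deviation bound \eqref{eq:moder} to the almost sure statement (which I read as $S_N/W_N \to 1$ a.s., where $S_N := \sum_{i=1}^N G_i^{(N)}$).

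First I would set up the scales. A direct computation gives $\E[G_i^{(N)}] = (1-p_N)/p_N = W_N/N - 1$ and $\mathrm{Var}(G_i^{(N)}) = (1-p_N)/p_N^2 \asymp W_N^2/N^2$, so $\E[S_N] = W_N - N$ and $\mathrm{Var}(S_N) \asymp W_N^2/N$. Because $W_N/N \to \infty$, the shift by $N$ between $\E[S_N]$ and $W_N$ is negligible at the scale of the target deviation $W_N N^{-\beta}$ for $N$ large: for such $N$ the event $\{|S_N/W_N - 1| > N^{-\beta}\}$ is contained in the event $\{|S_N - \E S_N| > \tfrac{1}{2} W_N N^{-\beta}\}$. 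Hence it suffices to prove a Gaussian-type concentration bound for $S_N$ around its mean, at this scale.

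For the Chernoff step, I would work from the explicit MGF $\E[e^{\lambda G_i^{(N)}}] = p_N/(1 - (1-p_N)e^\lambda)$, valid for $\lambda < -\log(1-p_N)$. Writing $\psi_N$ for its logarithm, a short calculation shows that on the range $|\lambda| \le p_N/2$ one has $|\psi_N''(\lambda)| \le C/p_N^2$ for an absolute constant $C$; Taylor's theorem then yields $\psi_N(\lambda) - \lambda\,\E G_i^{(N)} \le C\lambda^2/(2p_N^2)$ on this range. Markov's inequality applied to $e^{\lambda(S_N - \E S_N)}$ then gives
\begin{equation*}
\P\{|S_N - \E S_N| \ge t\} \le 2\exp\!\left(-\lambda t + \frac{C\lambda^2 W_N^2}{2N}\right),
\end{equation*}
and optimising over $\lambda$ produces, for $t = \tfrac{1}{2}W_N N^{-\beta}$, the bound $2\exp(-c_\beta N^{1-2\beta})$. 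The only point to check is that the optimiser $\lambda^* = tN/(CW_N^2) = N^{1-\beta}/(2CW_N)$ lies inside the admissible window $[-p_N/2, p_N/2] = [-N/(2W_N), N/(2W_N)]$; this reduces to $N^{1-\beta} \le CN$, i.e.\ $\beta > 0$, which is automatic.

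The almost sure statement then follows by Borel--Cantelli: fix any $\beta \in (0, 1/2)$, so that $\exp(-c_\beta N^{1-2\beta})$ is summable in $N$; then $|S_N/W_N - 1| \le N^{-\beta}$ eventually almost surely, which in particular gives $S_N/W_N \to 1$ a.s. The main technical obstacle is the absolute-constant bound on $\psi_N''$ over a $p_N$-sized window around the origin, which requires controlling how close $(1-p_N)e^\lambda$ gets to the singularity at $1$; everything else (the reduction from deviation of the ratio to deviation of $S_N$ around its mean, the optimisation, and the Borel--Cantelli step) is routine.
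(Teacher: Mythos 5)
Your Chernoff machinery is sound, and it is a genuinely different route from the paper's: you centre $S_N:=\sum_{i=1}^N G_i^{(N)}$ at its mean and prove a sub-Gaussian bound for the log-MGF on a window $|\lambda|\le p_N/2$, whereas the paper tilts directly at $t=-\alpha\log(1-p_N)$, bounds the resulting expression with the elementary inequalities $(1-x)^{1-\alpha}<1-(1-\alpha)x$ and $\log(1-x)<-x$, and lands on the Cram\'er-type exponent $N(-\e+\log(1+\e))$, which it then Taylor-expands at $\e=N^{-\beta}$. Both computations give the right order $e^{-cN^{1-2\beta}}$, and your bound on $\psi_N''$ and the check that the optimiser lies in the admissible window are correct.

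The genuine gap is the containment step. You have $\E S_N=W_N-N$, so
\[
\Big\{\Big|\tfrac{S_N}{W_N}-1\Big|>N^{-\beta}\Big\}=\big\{|S_N-\E S_N-N|>W_NN^{-\beta}\big\},
\]
and absorbing the shift $N$ into half the deviation requires $N\le \tfrac12 W_NN^{-\beta}$, i.e.\ $W_N\ge 2N^{1+\beta}$. That does \emph{not} follow from the stated hypothesis $p_N=N/W_N\to0$: take $W_N=N\log N$, so $N\gg W_NN^{-\beta}$ for every $\beta>0$. Worse, in that regime the lower-tail event $\{S_N/W_N<1-N^{-\beta}\}$ contains $\{S_N<\E S_N\}$, whose probability is of order $1/2$, so no reduction can rescue the two-sided bound \eqref{eq:moder} there. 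Concretely: (a) for the \emph{upper} tail the shift only helps, since $\{S_N\ge(1+N^{-\beta})W_N\}\subseteq\{S_N-\E S_N\ge W_NN^{-\beta}\}$ with no condition on $W_N$ — this is why the paper's upper-tail computation, which measures deviations relative to $W_N$ rather than $\E S_N$, goes through cleanly; (b) the lower tail genuinely needs $p_N=o(N^{-\beta})$, a point the paper's own ``the lower bound follows in a similar way'' also elides, so you have in effect exposed a hidden hypothesis of the lemma rather than merely a defect of your method; (c) your derivation of the a.s.\ statement from the $N^{-\beta}$ bound inherits the gap, but is easily repaired by running the same argument with a fixed $\e>0$, for which $N=p_NW_N\le\tfrac{\e}{2}W_N$ eventually and the containment is valid.
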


\begin{proof}
Let $\P_N$ the marginal of the $N$-th row. Let $\e > 0$ and for $0< t < -\log(1-p_N)$, use a Chernoff bound 
\begin{align*}
\P_N\Big\{ \frac{ \sum_{i = 1}^N G_i^{(N)}}{W_N} > 1+\e \Big\} &\le e^{-t W_N (1+\e)}\E_{\P_N}(e^{t\sum_{i = 1}^N G_i^{(N)}})=e^{-t W_N (1+\e)}\Big(\E_{\P_N}(e^{tG_i^{(N)}}) \Big)^N\\
&= e^{-t W_N (1+\e)}\Big(\frac{p_N}{1-(1-p_N)e^t}\Big)^N.
\end{align*}
Let $\alpha \in (0,1)$ and let  $t =  -\alpha \log(1-p_N)$ for $N$ large enough and we have that 
\[
\P_N\Big\{ \frac{ \sum_{i = 1}^N G_i^{(N)}}{W_N} > 1+\e \Big\} \le  e^{ \alpha W_N (1+\e)\log(1-p_N)}\Big(\frac{p_N}{1-(1-p_N)^{1-\alpha}}\Big)^N.
\]
Note that for $x > 0$ small enough we have $(1 - x)^{1-\alpha} < 1 - (1-\alpha)x$ and $\log(1-x)< -x$. For $x = p_N$ we further bound 
\be\label{eq:mod}
\P_N\Big\{ \frac{ \sum_{i = 1}^N G_i^{(N)}}{W_N} > 1+\e \Big\} \le  e^{ -\alpha N (1+\e)}\Big(\frac{1}{{1-\alpha}}\Big)^N = e^{(-\alpha(1+\e) - \log(1-\alpha))N}.
\ee
The function $-\alpha(1+\e) - \log(1-\alpha)$ attains a minimum when $\alpha = \frac{\e}{1+\e}$ and at that point the value is negative. Therefore we found a constant $c_1(\e)$ so that 
\be\label{eq:1G}
\P_N\Big\{ \frac{ \sum_{i = 1}^N G_i^{(N)}}{W_N} > 1+\e \Big\} \le  e^{-c_1(\e)N}.
\ee
Similarly, we can find a constant $c_2$ such that  
\be \label{eq:2G}
\P_N\Big\{ \frac{ \sum_{i = 1}^N G_i^{(N)}}{W_N} < 1-\e \Big\} \le  e^{-c_2(\e)N},
\ee
by first multiplying in the probability with $t < 0$. The almost sure convergence follows from the Borel-Cantelli lemma.

For the second part of the Lemma return to equation \eqref{eq:mod} and set $\e = \e_N = N^{-\beta}$ for any $\beta < 1/2$. Then $\alpha = N^{-\beta}/(1+ N^{-\beta})$. A Taylor expansion on $\log(1+\e_N)$ now gives 
\[
\P_N\Big\{ \frac{ \sum_{i = 1}^N G_i^{(N)}}{W_N} > 1+ N^{-\beta} \Big\} \le e^{(- \e_N + \log(1+\e_N))N} \le e^{-c_\beta N^{1-2\beta}}. 
\]
The lower bound follows in a similar way, using \eqref{eq:2G}.
\end{proof}

The following lemma is a direct consequence of the strong law of large numbers for triangular arrays \cite{taylor1987strong} so we omit the proof. The fact that the limiting law is that  of an exponential follows from the fact that for each finite collection $\mathcal I$ of $i$, the vector $\Big(\frac{N}{W_N}G_i^{(N)}\Big)_{i \in \mathcal I}$ converges weakly to a vector of independent exponential random variables of rate 1.

\begin{lemma}(SLNN for the triangular array) \label{lem:LLNar}Consider a triangular array with independent rows indexed by $N$ and random entries $\{G_i^{(N)}\}_{1\le i \le N, N \in \N}$. Each row
$
\{G_i^N\}_{1 \le i \le N} 
$
consists of i.i.d.\ geometric random variables with success probability $p_N = N/W_N$, so that $\lim_{N\to \infty}p_N = 0$. Let $\P$ denote  the law of the array. Let  $g \in C_b(\R_+)$ and denote by 
\[ \E_{\rm Exp(1)}(g) = \int_{0, \infty} g(x)e^{-x}\,dx.\]
Then 
\[
\frac{1}{N}\sum_{i=1}^N g\Big( \frac{N}{W_N}G_i^{(N)}\Big) \to  \E_{\rm Exp(1)}(g), \quad \P-\rm{a.s.}
\]
\end{lemma}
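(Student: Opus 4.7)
The plan is to reduce the statement to a Hoeffding-type SLLN for triangular arrays, combined with a one-dimensional weak convergence statement for the marginals. Specifically, I would proceed in three steps: (i) verify that $\tfrac{N}{W_N}G_1^{(N)} \Longrightarrow \mathrm{Exp}(1)$ as $N\to\infty$; (ii) deduce from this that $\E_{\P_N}\big(g(\tfrac{N}{W_N}G_1^{(N)})\big) \to \E_{\mathrm{Exp}(1)}(g)$ for every $g\in C_b(\R_+)$; (iii) apply an SLLN to the row-wise i.i.d.\ bounded variables $Y_i^{(N)} := g(\tfrac{N}{W_N}G_i^{(N)})$ to obtain almost sure convergence of the row averages to their means.

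For step (i), a direct computation using $\P\{G_1^{(N)}\ge k\} = (1-p_N)^k$ with $p_N = N/W_N\to 0$ yields, for any $t>0$,
\[
\P_N\big\{\tfrac{N}{W_N} G_1^{(N)} > t\big\} = (1-p_N)^{\lceil t/p_N\rceil} = \exp\!\big(\tfrac{1}{p_N}\log(1-p_N)\cdot t + o(1)\big) \longrightarrow e^{-t},
\]
since $\tfrac{1}{p_N}\log(1-p_N)\to -1$ as $p_N\to 0$. Step (ii) is then immediate from the definition of weak convergence applied to the bounded continuous test function $g$.

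The essence of the proof is step (iii). Since $|Y_i^{(N)}|\le \|g\|_\infty$, Hoeffding's inequality applied row-by-row gives
\[
\P_N\!\left\{\Big|\tfrac{1}{N}\sum_{i=1}^N Y_i^{(N)} - \E_{\P_N}(Y_1^{(N)})\Big|>\varepsilon\right\} \le 2\exp\!\Big(-\tfrac{N\varepsilon^2}{2\|g\|_\infty^2}\Big),
\]
and the key point is that the constant on the right-hand side depends only on $\|g\|_\infty$ and not on the $N$-dependent law of $Y_1^{(N)}$. These bounds are summable in $N$, so by Borel--Cantelli the deviations $\tfrac{1}{N}\sum_i Y_i^{(N)} - \E_{\P_N}(Y_1^{(N)})$ vanish $\P$-a.s., and combining with step (ii) gives the stated conclusion. (Alternatively, one may quote the SLLN for triangular arrays in \cite{taylor1987strong}, whose hypotheses are trivially satisfied here by boundedness.)

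The only mildly delicate point is ensuring that the exponential tail bound in step (iii) is \emph{uniform} in $N$, which is why we work with the bounded function $g$ rather than attempting to control $\tfrac{N}{W_N}G_i^{(N)}$ itself (whose variances grow without bound). Once boundedness is exploited in Hoeffding, independence across rows (assumed in the lemma) allows the conclusion to hold on the product probability space $\P$; but in fact almost sure convergence would follow from Borel--Cantelli even without row-independence.
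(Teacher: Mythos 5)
Your proposal is correct, and it is a genuinely more complete argument than what the paper supplies: the paper omits the proof entirely, asserting that the lemma is ``a direct consequence of the strong law of large numbers for triangular arrays \cite{taylor1987strong}'' together with the observation that the one-dimensional marginals $\tfrac{N}{W_N}G_i^{(N)}$ converge weakly to $\mathrm{Exp}(1)$. Your three-step decomposition makes both ingredients explicit: step (i)--(ii) is exactly the paper's weak-convergence remark (your tail computation $(1-p_N)^{t/p_N+O(1)}\to e^{-t}$ is the standard way to verify it), and step (iii) replaces the citation of \cite{taylor1987strong} by a direct Hoeffding-plus-Borel--Cantelli argument, which is legitimate here precisely because the $Y_i^{(N)}=g(\tfrac{N}{W_N}G_i^{(N)})$ are uniformly bounded by $\|g\|_\infty$, making the exponential bound uniform in $N$ and summable. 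This concentration route is in fact the same style of estimate the paper uses elsewhere (e.g.\ the Chernoff bounds in Lemma \ref{lem:asGn} and the summability statement \eqref{eq:summablear}, which the paper derives from the SLLN rather than the other way around). Your closing observations are also accurate: the first Borel--Cantelli lemma needs no independence across rows, and centring at the $N$-dependent means $\E_{\P_N}(Y_1^{(N)})$ before invoking weak convergence is exactly the right way to handle the fact that the law of each row changes with $N$. The only cosmetic quibble is the $\lceil t/p_N\rceil$ versus $\lfloor t/p_N\rfloor+1$ bookkeeping in step (i), which does not affect the limit.
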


An immediate consequence of the strong law is that for any $\e>0$ and any fixed $g \in C_b(\R_+)$, 
\be\label{eq:summablear}
\sum_{N=1}^\infty \P\Big\{ \Big|\frac{1}{N}\sum_{i=1}^N g\Big( \frac{N}{W_N}G_i^{(N)} \Big) - \E_{\rm Exp(1)}(g)\Big| > \e  \Big\} < \infty.
\ee
If not, since the rows of the array are independent, the second Borel-Cantelli lemma would give that for a.e.\ realisation, convergence is not possible which would lead to a contradiction.

Now, for any integer $K>1$, we estimate the number of coordinates in each row of the array, with value that exceeds $K$. To this effect, define auxiliary Bernoulli variables 
\[
\mathcal B_i^{N, K} =
\begin{cases}
	1, &{ \rm if \,} \frac{ N G^{(N)}_i }{W_N} > K,\\
	0, &{\rm otherwise}.
\end{cases}
\] 
Then
\begin{align}
\notag \P\Big\{ \sum_{i=1}^N \mathcal B_i^{N, K} > N K^{-1/2}  \Big\} &\le \P\Big\{ \frac{N}{W_N}\sum_{i=1}^N G_i^{(N)} >  NK^{1/2}  \Big\} \le \P\Big\{ \frac{1}{W_N}\sum_{i=1}^N G_i^{(N)} > K^{1/2}  \Big\} \\
&\le  e^{-c_K N}, \quad {\rm by \eqref{eq:1G}.} \label{eq:Bi}
\end{align}

\begin{theorem}\label{thm:partG}
Let $(X_1^N, \ldots, X^N_N)$ be distributed as in \eqref{eq:geom}, and define the empirical measure $\mu_0^N = \frac{1}{N} \sum_{i=1}^N \delta_{X_i^N}$. Then 
\[
\mu_0^N \Longrightarrow \mu_{0} \sim {\rm Exp(1)}, \quad \P-a.s.
\]
As such, for all $t > 0$ we have that the mean-field limit will satisfy $\mu_{t} \sim {\rm Exp(1)}$.
\end{theorem}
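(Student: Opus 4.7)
The plan is to first establish the almost sure weak convergence $\mu_0^N \Longrightarrow \mathrm{Exp}(1)$ and then invoke Theorem \ref{th:hydrodynamic_limit} together with Corollary \ref{cor:kinetic_and_equilibria}(1) to propagate this equilibrium for all $t > 0$. Introduce the auxiliary quantities
\[
Y_i^N := \frac{N}{W_N}\, G_i^{(N)}, \qquad S_N := \frac{1}{W_N}\sum_{j=1}^N G_j^{(N)},
\]
so that on the event $\{S_N > 0\}$ (which occurs eventually $\mathbb{P}$-a.s.\ by Lemma \ref{lem:asGn}) we have the identity $X_i^N = Y_i^N / S_N$. For a fixed $g \in C_b(\R_+)$, write
\[
\langle g, \mu_0^N\rangle - \E_{\mathrm{Exp}(1)}(g)
= \underbrace{\frac{1}{N}\sum_{i=1}^N \bigl(g(Y_i^N/S_N) - g(Y_i^N)\bigr)}_{=: A_N(g)}
+ \underbrace{\Bigl(\frac{1}{N}\sum_{i=1}^N g(Y_i^N) - \E_{\mathrm{Exp}(1)}(g)\Bigr)}_{=: B_N(g)}.
\]
The term $B_N(g)$ tends to $0$ almost surely by Lemma \ref{lem:LLNar}, and moreover \eqref{eq:summablear} gives Borel--Cantelli-summable tails, which is convenient when we later need to pool exceptional events over a countable class of test functions.

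The main technical input is to control $A_N(g)$ despite the fact that the $Y_i^N$ can be unbounded. Fix $\varepsilon > 0$ and choose $K$ so that $2\|g\|_\infty K^{-1/2} < \varepsilon/3$. Partition $\{1,\ldots,N\}$ into $I_K^N := \{i : Y_i^N \le K\}$ and $J_K^N := \{i : Y_i^N > K\}$. By the Bernoulli tail estimate \eqref{eq:Bi} and Borel--Cantelli, almost surely $|J_K^N|/N \le K^{-1/2}$ for all large $N$; hence the $J_K^N$-contribution to $A_N(g)$ is at most $2\|g\|_\infty K^{-1/2} < \varepsilon/3$. For the $I_K^N$-contribution, use that $g$ is uniformly continuous on the compact set $[0, K+1]$: since $S_N \to 1$ a.s.\ by Lemma \ref{lem:asGn}, eventually $|Y_i^N/S_N - Y_i^N| \le K\,|S_N^{-1} - 1|$ falls below the modulus-of-continuity threshold associated to $\varepsilon/3$, uniformly in $i \in I_K^N$. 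Combining these estimates yields $|A_N(g)| < 2\varepsilon/3$ eventually a.s. Because a convergence-determining countable class of bounded Lipschitz functions suffices for weak convergence on $\R_+$, a single null set captures all failures and we conclude $\mu_0^N \Longrightarrow \mathrm{Exp}(1)$ almost surely.

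To conclude, note that by construction $\langle x, \mu_0^N\rangle = N^{-1}\sum_{i=1}^N X_i^N = 1 = \langle x, \mathrm{Exp}(1)\rangle$, so the moment hypothesis \eqref{eq:bound_initial_data} of Theorem \ref{th:hydrodynamic_limit} holds with equality. Since $W_N \to \infty$ we are in the regime $w_0 = \infty$, so Theorem \ref{th:hydrodynamic_limit} applies to give $\mu_t^N \to \mu_t$ in probability, with $\mu_t$ the unique solution of \eqref{eq:kinetic_equation} issued from $\mathrm{Exp}(1)$. By Corollary \ref{cor:kinetic_and_equilibria}(1), the density $e^{-x}$ is stationary under $\bar Q_\infty$, hence $\mu_t \equiv \mathrm{Exp}(1)$ for all $t > 0$. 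The main obstacle is the ratio normalisation $Y_i^N/S_N$ together with the fact that $Y_i^N$ is unbounded in $i$; the two-regime split (uniform continuity on a compact range for the bulk of indices, counting of exceptional indices via \eqref{eq:Bi} for the tail) is precisely what reduces the problem to the SLLN for triangular arrays already supplied by Lemma \ref{lem:LLNar}.
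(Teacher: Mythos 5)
Your proposal is correct and follows essentially the same route as the paper's proof: the same decomposition via adding and subtracting $\frac{1}{N}\sum_i g\bigl(NG_i^{(N)}/W_N\bigr)$, the same two-regime split of indices using the tail-count bound \eqref{eq:Bi} for the exceptional indices and uniform continuity on a compact range for the bulk, and the same inputs Lemma \ref{lem:asGn}, Lemma \ref{lem:LLNar} and Borel--Cantelli. The only differences are organisational (you apply Borel--Cantelli to each bad event separately rather than to their union, and you spell out the countable convergence-determining class and the verification of the hypotheses of Theorem \ref{th:hydrodynamic_limit}, which the paper leaves implicit).
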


\begin{remark} Note that our initial sequence of measures is not uniform on the sequence of simplices $N \Delta_N$, in contrast with Theorem 2 and Corollary 2 of \cite{vershik2003asymptotics}. However, in light of Lemma \ref{lem:asGn}, the initial measures can be viewed as approximation of discrete uniform measures on mesh $1/W_N$ for $\Delta_N$. Or, one can view them as measures on partitions of the random number $\sum_{i=1}^N G^{(N)}_i$. 

Finally, one can obtain similar statements as Theorem 2 and Corollary 2 from \cite{vershik2003asymptotics}, by using appropriate bounded continuous functions $g_1, g_2$ in the duality relation. Namely $g_1$ needs to be a continuous approximation of $\mathbbm{1}_{[t, \infty)}$ and $g_2$ a continuous approximation of  $ x \mathbbm1_{[0,t]}(x) $. 
\end{remark}

\begin{proof}[Proof of Theorem \ref{thm:partG}]
Fix an $\e > 0$, $\beta \in (0,1)$ and and $K \in \N$. Also fix $g \in C_b(\R_+)$. We define the events 
\[
A_{N, \beta} = \Big\{ \Big|  \frac{ \sum_{i = 1}^N G_i^{(N)}}{W_N} -1  \Big| > N^{-\beta} \Big\}, \quad  B_{N, K} = \Big\{ \sum_{i=1}^N \mathcal B_i^{N, K} > K^{-1/2} N \Big\}, 
\]
and 
\[
C_{N, g, \e} = \Big\{ \Big|\frac{1}{N}\sum_{i=1}^N g\Big( \frac{N}{W_N}G_i^{(N)} \Big) - \E_{\rm Exp(1)}(g)\Big| > \e/4  \Big\}.
\]
Equations \eqref{eq:moder}, \eqref{eq:Bi} and \eqref{eq:summablear}  imply that 
\be\label{eq:allsum}
\sum_{N=1}^{\infty} \P\{ A_{N, \beta} \cup B_{N, K} \cup C_{N, g, \e}\} < \infty.
\ee

We perform the following estimates on $A_{N, \beta}^c \cap B_{N, K}^c \cap C^c_{N, g, \e}$. In particular, since we are on $A_{N, \beta}^c$ we have that $ \sum_{i = 1}^N G_i^{(N)} \neq 0$. Define  
\[ I_{N, K} = \Big\{ i: \frac{NG_i^{(N)}}{W_N} \le K \Big\}.\] 
Since we are on $B_{N, K}^c$, we have that  $(1 - K^{-1/2}) N \le |I_{N, K}| \le N$.

Then we write 
\begin{align}
\notag \langle g, \mu_0^N \rangle 
\notag &= \frac{1}{N} \sum_{i =1}^N g(X_i^N) =  \frac{1}{N} \sum_{i =1}^N g\left(\frac{N G_i^{(N)}}{ \sum_{i = 1}^N G_i^{(N)}} \right) = \frac{1}{N} \sum_{i =1}^N g\left(\frac{N G_i^{(N)}}{W_N}\frac{W_N}{ \sum_{i = 1}^N G_i^{(N)}} \right) \\
\notag &=\frac{1}{N} \sum_{i =1}^N g\left(\frac{N G_i^{(N)}}{W_N}\frac{W_N}{ \sum_{i = 1}^N G_i^{(N)}} \right) \mp \frac{1}{N} \sum_{i =1}^N g\left(\frac{N G_i^{(N)}}{W_N} \right)\\
\notag &\le  \frac{1}{N} \sum_{i =1}^N g\left(\frac{N G_i^{(N)}}{W_N} \right) + \frac{1}{N}\sum_{i=1}^{N} \left| g\left(\frac{N G_i^{(N)}}{W_N}\frac{W_N}{ \sum_{i = 1}^N G_i^{(N)}} \right)- g\left(\frac{N G_i^{(N)}}{W_N} \right) \right|\\
\label{eq:step1}&=  \frac{1}{N} \sum_{i =1}^N g\left(\frac{N G_i^{(N)}}{W_N} \right) + \frac{1}{N}\sum_{i \in I_{N, K}} \left| g\left(\frac{N G_i^{(N)}}{W_N}\frac{W_N}{ \sum_{i = 1}^N G_i^{(N)}} \right)- g\left(\frac{N G_i^{(N)}}{W_N} \right) \right|\\
\notag &\phantom{XXXXxxxxxxxxXXX}+   \frac{1}{N}\sum_{i \notin I_{N, K}} \left| g\left(\frac{N G_i^{(N)}}{W_N}\frac{W_N}{ \sum_{i = 1}^N G_i^{(N)}} \right)- g\left(\frac{N G_i^{(N)}}{W_N} \right) \right|.
\end{align}

Since $g$ is a bounded continuous function, its restriction on $[0,2K]$ is uniformly continuous and admits a modulus of continuity such that 
\[
|g(x) - g(y)| = \omega_{g, K}(|x -y|), \quad \textrm{ for all $x, y$ in $[0,2K]$.}
\]
Now assume $i \in I_{N,K}$. Since we are on $A^c_{N, \beta}$, we have that  
\[
\frac{N G_i^{(N)}}{W_N}\frac{1}{1+N^{-\beta}}\le \frac{N G_i^{(N)}}{W_N}\frac{W_N}{ \sum_{i = 1}^N G_i^{(N)}}\le \frac{N G_i^{(N)}}{W_N}\frac{1}{1-N^{-\beta}}. 
\]
This implies that 
\[
\left| g\left(\frac{N G_i^{(N)}}{W_N}\frac{W_N}{ \sum_{i = 1}^N G_i^{(N)}} \right)- g\left(\frac{N G_i^{(N)}}{W_N} \right) \right| \le \omega_{g, K}\left(K\frac{4}{N^{\beta}} \right),
\]
therefore, from \eqref{eq:step1} 
\begin{align*}
\langle g, \mu_0^N \rangle &\le \frac{1}{N} \sum_{i =1}^N g\left(\frac{N G_i^{(N)}}{W_N} \right) +  \omega_{g, K}\left(\frac{4K}{N^{\beta}}\right) \frac{|I_{N, K}|}{N} + 2 \|g\|_{\infty}  \frac{N - |I_{N, K}|}{N}  \\
&\le \frac{1}{N} \sum_{i =1}^N g\left(\frac{N G_i^{(N)}}{W_N} \right) +  \omega_{g, K}\left(\frac{4K}{N^{\beta}}\right)+ 2 \|g\|_{\infty} \frac{1}{\sqrt K} \\
&\le \E_{\rm Exp(1)}(g) + \frac{\e}{4} + \omega_{g, K}\Big(\frac{4K}{N^{\beta}}\Big)+ 2 \|g\|_{\infty} \frac{1}{\sqrt K}.
\end{align*}
The last inequality follows from the fact that we are on $C_{N, g, \e}^c$. The symmetric lower bound 
\[
\langle g, \mu_0^N \rangle \ge \E_{\rm Exp(1)}(g) - \frac{\e}{4} - \omega_{g, K}\left(\frac{4K}{N^{\beta}}\right)- 2 \|g\|_{\infty} \frac{1}{\sqrt K},
\]
is obtained in an identical manner.

Now, for a fixed $\e>0$ 
\begin{enumerate}
\item Select $K = K(\e, g)$ large so that $2\|g\|_{\infty} < \frac{\e}{4}\sqrt{K}$.
\item With $K$ fixed, select$N= N(\e, g, \beta)$ large enough so that $ \omega_{g, K}\Big(\frac{4K}{N^{\beta}}\Big) < \e /4$.
\end{enumerate}
These choices give  
\[
|\langle g, \mu_0^N \rangle - \E_{\rm Exp(1)}(g)| < \frac{3}{4}\e < \e.
\]
Then, we have just shown the inclusion of events
\[
\{ |\langle g, \mu_0^N \rangle - \E_{\rm Exp(1)}(g)| > \e\} \subseteq  \{A_{N, \beta} \cup B_{N, K} \cup C_{N, g, \e}\},
\]
for all $N$ large enough.
By \eqref{eq:allsum}and an application of the first Borel-Cantelli lemma, we have that for any fixed $\e>0$, 
\[
\lim_{N \to \infty} |\langle g, \mu_0^N \rangle - \E_{\rm Exp(1)}(g)| \le \e,  \quad \P-\textrm{a.s.}
\]  
Let $\e \to 0$ on a countable sequence, to obtain the result.
\end{proof}

\subsubsection{The case $\displaystyle \varliminf_{N\to\infty} \frac{W_N}{N}  \in(0, \infty)$} 
Usually when we have partitions, one may want the number of terms in the partition of a number to be less than the number itself. This restriction does not apply in this example.
We left this case for last, as the initial weak convergence does not directly lead to invariant measures that we discussed. The previous proofs work directly for this case as well if we choose our parameters correctly.  
 
Start with i.i.d.\ Geom($p$) random variables  $\{ V^N_i \}_{1\le i \le N, n \in \N }$ with mass function given by \eqref{eq:g}. We do not want any degeneracies so $p \in (0,1)$ fixed. Consider the vector 
\be\label{eq:exp31}
(X_1^N, \ldots, X_N^N) = \left( \frac{(1-p)NU_1^N}{p\sum_{i=1}^N U_i^N}, \ldots, \frac{(1-p)NU_N^N}{p\sum_{i=1}^N  U_i^N} \right) \in \frac{1-p}{p}N\Delta_{N-1}.
\ee
Then $W_N/N = \frac{1-p}{p} \in (0, \infty)$ and depending on the value of $p$ the limit can be any positive number. Then the methodology of the previous subsection applies with virtually no changes, except the convergence in Lemma \ref{lem:LLNar} changes to the expectation of a geometric random variable, since $p$ is now fixed. We leave the details to the interested reader. 

\begin{theorem}\label{thm:partD}
Let $(X_1^N, \ldots, X^N_N)$ be distributed as in \eqref{eq:exp31}, and define the empirical measure $\mu_0^N = \frac{1}{N} \sum_{i=1}^N \delta_{X_i^N}$. Then 
\[
\mu_0^N \Longrightarrow \mu_{0} \sim {{\rm Geom}(p)}, \quad \P-a.s.
\]
\end{theorem}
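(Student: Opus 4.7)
The plan is to run the same type of argument as in Theorem \ref{thm:partG}, but the situation is substantially easier because now the geometric parameter $p \in (0,1)$ does not vary with $N$, so $\{U_i^N\}_{i \ge 1}$ is a genuine i.i.d.\ sequence (not a triangular array) and the classical strong law and standard empirical distribution results apply directly. Set
\[
S_N := \sum_{i=1}^N U_i^N, \qquad R_N := \frac{(1-p)N}{p S_N}, \qquad X_i^N = R_N U_i^N.
\]
By the strong law of large numbers, $S_N/N \to (1-p)/p$ a.s., hence $R_N \to 1$ a.s. Likewise, by the Glivenko--Cantelli theorem (or the classical SLLN applied to $g(U_i^N)$), for any fixed $g \in C_b(\R_+)$,
\[
\frac{1}{N}\sum_{i=1}^N g(U_i^N) \longrightarrow \E\, g(U_1) = \E_{\mathrm{Geom}(p)}(g), \qquad \P\text{-a.s.}
\]

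The only content, therefore, is to show that replacing $U_i^N$ by $X_i^N = R_N U_i^N$ inside $g$ changes the empirical average by a vanishing amount. First I would fix $\e > 0$ and a truncation level $K \in \N$, and split the indices into $I_{N,K} = \{i : U_i^N \le K\}$ and its complement. On the complement, a moment/Markov estimate (or just the SLLN applied to $\mathbbm{1}\{U_i^N > K\}$) gives $|I_{N,K}^c|/N \to \P(U_1 > K)$ a.s., which can be made arbitrarily small by taking $K$ large, so the contribution of those indices to the difference is at most $2\|g\|_\infty \P(U_1 > K) + o(1)$. On $I_{N,K}$, since $R_N \to 1$ a.s., for $N$ large we have $R_N U_i^N \in [0, 2K]$ for every $i \in I_{N,K}$, and we can bound the difference by the modulus of continuity $\omega_{g,2K}(|R_N - 1| K)$, which tends to $0$ because $g$ is uniformly continuous on $[0, 2K]$ and $R_N \to 1$.

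Combining these two estimates yields
\[
\Bigl|\frac{1}{N}\sum_{i=1}^N g(X_i^N) - \frac{1}{N}\sum_{i=1}^N g(U_i^N)\Bigr| \le \omega_{g,2K}\!\bigl(K|R_N-1|\bigr) + 2\|g\|_\infty\,\frac{|I_{N,K}^c|}{N},
\]
$\P$-almost surely for all $N$ large. Letting $N \to \infty$, then $K \to \infty$, and finally $\e \to 0$ along a countable dense family of $g \in C_b(\R_+)$ (enough to characterise weak convergence), we conclude
\[
\langle g, \mu_0^N \rangle \longrightarrow \E_{\mathrm{Geom}(p)}(g), \qquad \P\text{-a.s.}
\]
and hence $\mu_0^N \Longrightarrow \mathrm{Geom}(p)$ almost surely. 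The only mildly delicate point is handling the countable-family / null-set issue when upgrading the convergence to a.s.\ weak convergence, but this is standard: pick a countable convergence-determining subfamily of $C_b(\R_+)$, apply the Borel--Cantelli step to each, and take the intersection of the full-measure events.
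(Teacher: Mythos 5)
Your proof is correct and follows essentially the route the paper intends: the paper gives no separate proof of Theorem \ref{thm:partD}, stating only that the methodology of Theorem \ref{thm:partG} applies ``with virtually no changes'' with the limit in Lemma \ref{lem:LLNar} replaced by the expectation of a geometric random variable, and your argument is exactly that adaptation (simplified by the observation that, since $p$ is fixed, one may realise everything from a single i.i.d.\ sequence and invoke the classical SLLN). The only point worth flagging is that if one insists on reading \eqref{eq:exp31} as a triangular array with independently resampled rows, the plain SLLN steps for $S_N/N$ and for $\frac{1}{N}\sum_i g(U_i^N)$ should be replaced by the concentration-plus-Borel--Cantelli estimates of Lemma \ref{lem:asGn} and \eqref{eq:summablear}, exactly as in the proof of Theorem \ref{thm:partG}; otherwise your decomposition into $I_{N,K}$ and its complement, the modulus-of-continuity bound, and the countable convergence-determining family all match the paper's scheme.
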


\begin{remark} Contrast this with Theorem 3 of \cite{vershik2003asymptotics}. The limit there is also a geometric random variable, albeit one that is supported on $\N$. Our measure on partitions here is not uniform on the $N$-partitions of $\frac{1-p}{p}N$ however, just a convenient approximation of it. It would be of interest to study the equilibrium measure for the kinetic equation, when we are starting from a geometric initial measure. 
\end{remark}


\subsection{ Real partitions} The development here is identical (but easier) as in the discrete case. We omit most details given that the previous proofs can be repeated. 

Start with i.i.d.\ Exponential(1) random variables  $\{ U^N_i \}_{1\le i \le N, n \in \N }$ and consider the vector 
\be\label{eq:exp3}
(X_1^N, \ldots, X_N^N) = \left( \frac{NU_1^N}{\sum_{i=1}^N U_i^N}, \ldots,  \frac{NU_N^N}{\sum_{i=1}^N  U_i^N} \right) \in N\Delta_{N-1}.
\ee
 Then, the distribution $\nu_0^N$ of $(X_1^N, \ldots, X_N^N)$ is that of a uniform vector on $N\Delta_{N-1}$. 
 
\begin{theorem}\label{thm:partE}
Let $(X_1^N, \ldots, X^N_N)$ be distributed as in \eqref{eq:exp3}, and define the empirical measure $\mu_0^N = \frac{1}{N} \sum_{i=1}^N \delta_{X_i^N}$. Then 
\[
\mu_0^N \Longrightarrow \mu_{0} \sim {\rm Exp(1)}, \quad \P-a.s.
\]
As such, for all $t > 0$ we have that the mean-field limit will satisfy $\mu_{t} \sim {\rm Exp(1)}$.
\end{theorem}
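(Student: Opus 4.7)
The plan is to mimic the structure of the proof of Theorem \ref{thm:partG}, exploiting the fact that here the Exp$(1)$ variables are i.i.d.\ in $i$ and can even be taken independent of $N$ (set $U_i^N = U_i$), so that standard i.i.d.\ concentration replaces the triangular-array analysis. Fix $g \in C_b(\R_+)$ and write
\[
\langle g, \mu_0^N \rangle = \frac{1}{N}\sum_{i=1}^N g\!\left(U_i \cdot \frac{N}{S_N}\right), \qquad S_N := \sum_{j=1}^N U_j.
\]
The goal is to compare this with $N^{-1}\sum_i g(U_i)$, which by the classical strong law converges almost surely to $\langle g, \mathrm{Exp}(1)\rangle = \int_0^\infty g(x)e^{-x}\,dx$. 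By SLLN, $S_N/N \to 1$ a.s., so heuristically the multiplicative correction $N/S_N$ is negligible; the work lies in upgrading this to uniform-enough control to pass through $g$.

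Following the template of Theorem \ref{thm:partG}, I would introduce three ``bad events'' whose probabilities are summable in $N$: for $\beta \in (0, 1/2)$ and a large integer $K$,
\[
A_{N,\beta} = \{|S_N/N - 1| > N^{-\beta}\}, \quad B_{N,K} = \{\#\{i: U_i > K\} > N/\sqrt K\},
\]
\[
C_{N,g,\e} = \Big\{\Big|N^{-1}\textstyle\sum_i g(U_i) - \langle g, \mathrm{Exp}(1)\rangle\Big| > \e/4\Big\}.
\]
Summability of $\sum_N \P(A_{N,\beta})$ is a Cramér/Chernoff bound for sums of i.i.d.\ exponentials (moderate-deviation rate $\exp(-c_\beta N^{1-2\beta})$); for $B_{N,K}$ one uses that $\mathbbm1\{U_i > K\}$ are i.i.d.\ Bernoulli$(e^{-K})$ with $e^{-K} \ll 1/\sqrt K$ for $K$ large, giving exponential-in-$N$ decay; for $C_{N,g,\e}$ one uses Hoeffding, since $g(U_i)$ are i.i.d.\ and bounded by $\|g\|_\infty$.

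On the complement $A_{N,\beta}^c \cap B_{N,K}^c \cap C_{N,g,\e}^c$, split the index set into $I_{N,K} = \{i: U_i \le K\}$ and its complement of size at most $N/\sqrt K$. For $i \in I_{N,K}$, the factor $N/S_N \in [(1+N^{-\beta})^{-1}, (1-N^{-\beta})^{-1}]$ together with uniform continuity of $g\restriction_{[0,2K]}$ (with modulus $\omega_{g,K}$) yields
\[
\left|g\!\left(\frac{NU_i}{S_N}\right) - g(U_i)\right| \le \omega_{g,K}\!\left(\frac{2K}{N^\beta}\right),
\]
while the remaining $i \notin I_{N,K}$ contribute at most $2\|g\|_\infty/\sqrt K$. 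Combined with $C_{N,g,\e}^c$,
\[
\bigl|\langle g, \mu_0^N\rangle - \langle g, \mathrm{Exp}(1)\rangle\bigr| \le \tfrac{\e}{4} + \omega_{g,K}(2K N^{-\beta}) + \tfrac{2\|g\|_\infty}{\sqrt K}.
\]
Choosing first $K$ large (to kill the third term), then $N$ large (to kill the second), gives $< \e$. Borel--Cantelli then yields $\langle g, \mu_0^N\rangle \to \langle g, \mathrm{Exp}(1)\rangle$ almost surely for each fixed $g$. To promote this to a.s.\ weak convergence of measures, apply the argument to a countable convergence-determining family in $C_b(\R_+)$ (for instance $\{e^{-kx}\}_{k\in \N}$, whose expectations are Laplace transforms and thus determine measures on $\R_+$) and intersect the full-probability events. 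Finally, $\langle x, \mu_0^N\rangle = 1$ and $\langle x, \mathrm{Exp}(1)\rangle = 1$ verify hypothesis \eqref{eq:bound_initial_data} of Theorem \ref{th:hydrodynamic_limit} (with $w_0 = \infty$), and invariance of $\mathrm{Exp}(1)$ under the kinetic flow follows from Corollary \ref{cor:kinetic_and_equilibria}(1), so $\mu_t \sim \mathrm{Exp}(1)$ for all $t > 0$.

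The main technical hurdle is really only the almost-sure (rather than in-probability) nature of the weak convergence, which is handled cleanly by the Borel--Cantelli plus countable-family trick; all other estimates are standard i.i.d.\ concentration. Compared with the proof of Theorem \ref{thm:partG}, the absence of triangular-array complications and the availability of Hoeffding rather than ad hoc estimates on bounded Bernoullis makes each of the three summability checks nearly immediate.
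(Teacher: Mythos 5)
Your proposal is correct and is essentially the paper's intended argument: the paper gives no separate proof for this theorem, stating only that ``the development here is identical (but easier) as in the discrete case'' and that the proof of Theorem \ref{thm:partG} can be repeated, which is precisely what you do (same bad events $A_{N,\beta}$, $B_{N,K}$, $C_{N,g,\e}$, same truncation at level $K$ with the modulus of continuity, same Borel--Cantelli conclusion). Your explicit reduction to a countable convergence-determining family and the use of standard i.i.d.\ concentration in place of the triangular-array lemmas are exactly the simplifications the paper alludes to.
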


\begin{remark} This implies Theorem 1 and Corollary 1 of \cite{vershik2003asymptotics}. 
\end{remark}

\appendix
\section{Some properties of the Skorokhod space}

\begin{theorem}[Prohorov's theorem (\cite{ethier2009markov}), Chapter 3]
Let $(S,d)$ be complete and separable, and let $\mathcal{M} \in \mathcal{M}_1(S)$. Then the following are equivalent:
\begin{enumerate}
	\item $\mathcal{M}$ is tight.
	\item For each $\e>0$, there exists a compact $K\in S$ such that
	$$\inf_{P\in \mathcal{M}}P(K^\e) \geq 1-\e$$
	where $K^\e:= \{x\in S: \inf_{y\in K} d(x,y)<\e\}$.
	\item $\mathcal{M}$ is relatively compact.
\end{enumerate}
\end{theorem}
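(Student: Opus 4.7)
The plan is to establish the equivalences via $(1) \Rightarrow (2) \Rightarrow (1)$ and then $(1) \Leftrightarrow (3)$, exploiting completeness and separability of $(S,d)$ at the critical steps. The implication $(1) \Rightarrow (2)$ is immediate since any compact $K$ is contained in its $\e$-neighborhood $K^\e$, so $P(K^\e) \geq P(K) \geq 1 - \e$ uniformly. For the converse $(2) \Rightarrow (1)$, I would iterate the hypothesis: for given $\e > 0$ and each $n \in \N$, choose a compact $K_n$ with $\inf_{P \in \mathcal{M}} P(K_n^{1/n}) \geq 1 - \e 2^{-n}$, and form $C = \bigcap_{n \geq 1} \overline{K_n^{1/n}}$. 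Each closure $\overline{K_n^{1/n}}$ is totally bounded (a finite $1/n$-net for $K_n$ serves as a $2/n$-net for $\overline{K_n^{1/n}}$), hence $C$ is closed and totally bounded, and by completeness of $S$ it is compact. A union bound then gives $P(C) \geq 1 - \e$ uniformly over $\mathcal{M}$.

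For $(1) \Rightarrow (3)$, the main direction, I would take an arbitrary sequence $\{P_k\} \subset \mathcal{M}$ and construct a weakly convergent subsequence. Tightness provides an increasing family of compacts $K_m$ with $\inf_{P \in \mathcal{M}} P(K_m) \geq 1 - 1/m$. Each $K_m$ is a compact metric space, so $C(K_m)$ is separable, and Banach--Alaoglu yields weak-$*$ sequential compactness of the unit ball of its dual; Riesz representation identifies positive weak-$*$ limits with positive measures. A diagonal extraction across $m$ produces a subsequence $\{P_{k_j}\}$ and sub-probability measures $\nu_m$ on $K_m$ such that $\la f, P_{k_j} \ra \to \la f, \nu_m \ra$ for every $f \in C(K_m)$. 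Consistency $\nu_{m+1}|_{K_m} = \nu_m$ together with $\nu_m(K_m) \geq 1 - 1/m$ lets the family extend uniquely to a probability measure $P$ on $S$, and a portmanteau argument (applied on $K_m$ and then sending $m \to \infty$) upgrades the convergence to weak convergence $P_{k_j} \Rightarrow P$ on the full Polish space.

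For the converse $(3) \Rightarrow (1)$, I would argue by contradiction. Suppose $\mathcal{M}$ is relatively compact but not tight, so there exists $\e_0 > 0$ such that no compact $K$ satisfies $\inf_{P \in \mathcal{M}} P(K) \geq 1 - \e_0$. Using separability, cover $S$ with open balls $\{B(x_i, 1/n)\}_{i \in \N}$. For each $n$, the failure of tightness allows us to choose $P_n \in \mathcal{M}$ with $P_n\bigl(\bigcup_{i \leq n} B(x_i, 1/n)\bigr) < 1 - \e_0$. By relative compactness, extract a weakly convergent subsequence $P_{n_k} \Rightarrow P$. The portmanteau theorem applied to the open sets $U_n = \bigcup_{i \leq n} B(x_i, 1/n)$, combined with Ulam's theorem (every Borel probability measure on a Polish space is tight, so $P(U_n) \to 1$), produces the contradiction $1 - \e_0 \geq \liminf_k P_{n_k}(U_N) \geq P(U_N)$ for large $N$.

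The main obstacle will be $(1) \Rightarrow (3)$: specifically, verifying that the diagonal sub-probability limits $\nu_m$ on the exhausting compacts knit together into a \emph{genuine} probability measure on all of $S$, and that the vague convergence on each $K_m$ actually assembles into weak convergence of $P_{k_j}$ against arbitrary functions in $C_b(S)$. Controlling the sub-probabilistic mass loss uniformly along the sequence and passing to the limit is the delicate part; the other implications are either trivial or follow from standard covering-plus-completeness arguments.
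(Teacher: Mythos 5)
First, note that the paper itself contains no proof of this statement: it is reproduced in the appendix directly from Ethier and Kurtz (Chapter 3) and invoked as a black box to pass from tightness to relative compactness in Section 4, so there is no in-paper argument to compare yours against. Your outline is the standard textbook proof and its architecture is sound: $(1)\Rightarrow(2)$ is trivial; $(2)\Rightarrow(1)$ works by intersecting the closures $\overline{K_n^{1/n}}$, which are totally bounded, and invoking completeness plus a union bound; $(1)\Rightarrow(3)$ proceeds by diagonal extraction over an exhausting sequence of compacts; $(3)\Rightarrow(1)$ by contradiction via separability and portmanteau.

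Two steps, as written, would fail and need repair. First, in $(1)\Rightarrow(3)$ the asserted exact consistency $\nu_{m+1}|_{K_m}=\nu_m$ is false in general: take $P_k=\delta_{x_k}$ with $x_k\in K_{m+1}\setminus K_m$ and $x_k\to x\in\partial K_m$; then $P_k|_{K_m}=0$, so $\nu_m=0$, while $\nu_{m+1}=\delta_x$ and $\nu_{m+1}|_{K_m}=\delta_x\neq 0$ --- mass can enter $K_m$ through its boundary in the limit. The correct (and sufficient) statement is the monotone domination $\la f,\nu_m\ra\le\la f,\nu_{m+1}\ra$ for $f\ge0$; one then defines $\mu$ as the increasing limit of the $\nu_m$, obtains $\mu(S)=\lim_m\nu_m(K_m)=1$ from $1-1/m\le\nu_m(K_m)\le1$, and closes with the three-term estimate $|\la f,P_{k_j}\ra-\la f,\mu\ra|\le 2\|f\|_\infty/m+o_j(1)$ exactly as you indicate. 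Second, in $(3)\Rightarrow(1)$ the negation of tightness does not directly let you choose $P_n$ with small mass on $\bigcup_{i\le n}B(x_i,1/n)$, since these sets are not compact; you must first reduce tightness to the property that for all $\e,\delta>0$ some finite union of $\delta$-balls carries uniform mass at least $1-\e$ (this is precisely your $(2)\Rightarrow(1)$ construction), negate that to obtain fixed $\e_0,\delta_0$, and work with $U_n=\bigcup_{i\le n}B(x_i,\delta_0)$. With the radius fixed the $U_n$ increase to $S$, so $P(U_N)\to1$ by continuity from below and no appeal to Ulam's theorem is needed; with radius $1/n$ as you wrote, the sets are not monotone and the final limiting step does not close. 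Both repairs are routine, and with them the proof is complete.
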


Let $(E,r)$ be a metric space. The space $D([0,\infty); E)$ of c\`adl\`ag functions taking values in $E$ is widely used in stochastic processes. In general we would like to study the convergence of measures on this space, however, most of the tools known for convergence of measures are for measures in $\mathcal{M}_1(S)$ for $S$ a complete separable metric space. Therefore, it would be very useful to find a topology in $D([0,\infty); E)$ such that it is a complete and separable metric space. This can be done when $E$ is also complete and separable; and the metric considered is the Skorokhod one. This is why in this case the space of c\`adl\`ag functions is called Skorokhod space. 

Some important properties of this space are the following:
\begin{proposition}[\cite{ethier2009markov}, Chapter 3]
If $x\in D([0,\infty); E)$, then $x$ has at most countably many points of discontinuity.
\end{proposition}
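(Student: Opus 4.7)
The plan is to show that on every compact time interval, the set of discontinuities of prescribed ``size'' is finite, and then exhaust all discontinuities by a countable union.

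More precisely, fix $T>0$ and $\varepsilon > 0$, and consider
\[
D_{T,\varepsilon}(x) := \{ t \in [0,T] : r(x(t), x(t-)) \geq \varepsilon \}.
\]
I would first prove that $D_{T,\varepsilon}(x)$ is finite. Suppose for contradiction it were infinite. Then, since $[0,T]$ is compact, there is a sequence $(t_n)$ of distinct points in $D_{T,\varepsilon}(x)$ converging to some $t^* \in [0,T]$. By passing to a further subsequence I may assume either $t_n \downarrow t^*$ or $t_n \uparrow t^*$ (with all $t_n \neq t^*$). In the first case, the right-continuity of $x$ at $t^*$ gives $x(t_n) \to x(t^*)$ and also $x(t_n-) \to x(t^*)$ (since $t_n - \delta_n$ can be chosen with $t_n - \delta_n > t^*$ and $t_n-\delta_n \to t^*$, while $x(t_n -) = \lim_{s\uparrow t_n} x(s)$ is approximated by such values); therefore $r(x(t_n), x(t_n-)) \to 0$, contradicting the lower bound $\varepsilon$. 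In the second case, the existence of the left limit $x(t^*-)$ similarly forces $x(t_n), x(t_n-) \to x(t^*-)$, again contradicting $r(x(t_n), x(t_n-)) \geq \varepsilon$. Hence $D_{T,\varepsilon}(x)$ is finite.

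Once that is established, the rest is a countable-union argument: the set of all points of discontinuity of $x$ is
\[
D(x) = \bigcup_{m \in \mathbb{N}} \bigcup_{n \in \mathbb{N}} D_{m, 1/n}(x),
\]
because $t$ is a discontinuity iff $r(x(t), x(t-)) > 0$, which means $r(x(t), x(t-)) \geq 1/n$ for some $n$, and $t \in [0,m]$ for some $m$. This displays $D(x)$ as a countable union of finite sets, hence countable.

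The only delicate step is the subsequence argument used to show finiteness of $D_{T,\varepsilon}(x)$; I must be slightly careful that the approximating sequences used to realize the left/right limits at $t^*$ can be chosen compatibly with the jump points $t_n$. This is a standard manipulation: in the case $t_n \downarrow t^*$, pick $s_n \in (t^*, t_n)$ with $s_n \uparrow t_n$ along a diagonal, so that $r(x(s_n), x(t_n-)) < \varepsilon/3$ by definition of the left limit at $t_n$, while $x(s_n) \to x(t^*)$ and $x(t_n) \to x(t^*)$ by right-continuity at $t^*$; the triangle inequality then yields $r(x(t_n), x(t_n-)) < \varepsilon$ eventually, a contradiction. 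The other case is symmetric using the left limit at $t^*$. No other nontrivial ingredient is needed, and no hypothesis on $E$ beyond its being a metric space is used.
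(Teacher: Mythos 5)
Your proof is correct and complete: the finiteness of the set of jumps of size at least $\varepsilon$ on a compact interval, established by the compactness/subsequence argument with the $\varepsilon/3$ triangle-inequality refinement, followed by the countable union over $\varepsilon = 1/n$ and $T = m$, is exactly the standard argument (cf.\ Lemma 5.1 of Chapter 3 in the cited Ethier--Kurtz reference). The paper itself states this proposition without proof, simply citing that reference, so there is no in-paper argument to compare against; your observation that only the metric-space structure of $E$ is needed (no completeness or separability) is also accurate.
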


\begin{theorem}[\cite{ethier2009markov}, Chapter 3]
If $E$ is separable, then $D([0,\infty); E)$ is separable. If $(E,r)$ is complete, then $(D([0,\infty);E), d)$ is complete, where $d$ is the Skorokhod metric. 

\end{theorem}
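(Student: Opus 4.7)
The plan is to follow the classical arguments of Ethier--Kurtz (Chapter 3) in two separate steps, since separability and completeness are independent statements about the pair $(D([0,\infty);E), d)$. Throughout, $d$ denotes the Skorokhod metric built from a family of time-reparameterisations $\lambda:[0,\infty)\to[0,\infty)$, strictly increasing continuous bijections, with a penalty $\gamma(\lambda)$ measuring how far $\lambda$ is from the identity.

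For separability, I would fix a countable dense set $\{e_k\}_{k\in\N}\subset E$ and consider the countable family $\mathcal{S}$ of piecewise constant càdlàg functions of the form $\sum_{i=0}^{n2^m-1} e_{k_i}\,\mathbbm{1}_{[i/2^m,(i+1)/2^m)} + e_{k_n}\mathbbm{1}_{[n,\infty)}$ with $n,m\in\N$ and $k_0,\ldots,k_n\in\N$. Given $x\in D([0,\infty);E)$ and $\e>0$, I would pick $T$ with $e^{-T}<\e/2$, choose a partition of $[0,T]$ adapted to the jumps of $x$ finer than its modulus of continuity $w'(x,\delta,T)$, and select dyadic endpoints close enough to the jump times. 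Because $x$ has at most countably many discontinuities (cited in the preceding proposition) and its image $x([0,T])$ is totally bounded by làg-càd regularity, one can approximate each value by an $e_k$, and the mismatch in jump locations is absorbed by a suitable time-change $\lambda$ with $\gamma(\lambda)$ small. This produces an element of $\mathcal{S}$ within $\e$ in the Skorokhod metric.

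For completeness, I would take a $d$-Cauchy sequence $\{x_n\}\subset D([0,\infty);E)$ and extract a subsequence $\{x_{n_k}\}$ with $d(x_{n_k},x_{n_{k+1}})<2^{-k}$. For each $k$, pick a time-change $\mu_k\in\Lambda$ nearly attaining the infimum in $d(x_{n_k},x_{n_{k+1}})$, so that both $\gamma(\mu_k)<2^{-k+1}$ and the integrand is correspondingly small. The key construction is to define composed time-changes $\lambda_k = \mu_1\circ\mu_2\circ\cdots\circ\mu_{k-1}$ and show that $\{\lambda_k\}$ converges locally uniformly on $[0,\infty)$ to a limit $\lambda_\infty\in\Lambda$, using summability of the $\gamma(\mu_k)$ to control $\gamma(\lambda_\infty)$. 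Then, on each compact $[0,T]$, the sequence $y_k := x_{n_k}\circ\lambda_k$ is Cauchy in the uniform metric (with values in the complete space $(E,r)$), so it converges uniformly to some $y_\infty:[0,T]\to E$. Patching these limits across $T\to\infty$ yields a function $y_\infty$, and then $x := y_\infty\circ\lambda_\infty^{-1}$ is the candidate Skorokhod limit.

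The main obstacle is verifying two non-routine facts about the candidate $x$: first, that it is actually càdlàg, which requires showing that uniform limits of càdlàg functions along well-behaved time-changes remain càdlàg (this uses that $\gamma(\mu_k)\to 0$ fast enough that jump structure is preserved); and second, that the original Cauchy sequence $\{x_n\}$ (not just the chosen subsequence) converges to $x$ in $d$, which follows from the standard subsequence trick once the subsequential limit is identified. Completeness of $(E,r)$ enters precisely in the uniform-convergence step where we extract $y_\infty$, and separability of $E$ is used only in the first half. Both halves are purely metric-topological and make no use of the wealth-model content of the paper, so the argument is self-contained modulo the definition of $d$ recalled at the start of the appendix.
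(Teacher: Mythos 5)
The paper does not prove this statement at all: it is quoted verbatim from Ethier--Kurtz (Chapter~3, their Theorem~5.6) as background material in the appendix, so the ``paper's proof'' is simply the citation. Your sketch is a faithful reconstruction of exactly that source's argument --- piecewise-constant approximants with values in a countable dense set and dyadic jump times for separability, and the composed-time-change/uniform-limit-on-compacts construction for completeness --- so it matches the intended proof and I see no gap beyond the details you already flag as needing care.
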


\begin{theorem}
The Skorokhod space is a complete separable metric space.
\end{theorem}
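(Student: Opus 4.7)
The plan is to recognize this final theorem as an immediate corollary of the preceding theorem in the appendix, which already does the real work in two steps: separability of $D([0,\infty); E)$ from separability of $E$, and completeness of $(D([0,\infty); E), d)$ from completeness of $(E,r)$. So the proof reduces to invoking the convention that the Skorokhod space $D([0,\infty); E)$ is always considered over a state space $E$ that is itself a complete separable metric space (in the applications in this paper, $E = \mathcal{M}_1(\R_+)$ or $E = \R_+$, both of which are Polish). Applying the preceding theorem to such $E$ yields both separability and completeness of $(D([0,\infty); E), d)$ simultaneously, which is what the statement asserts.

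For the separability half, the standard argument I would write out is to exhibit a countable dense subset: piecewise constant c\`adl\`ag paths with finitely many jumps, whose jump times are rational and whose values lie in a fixed countable dense subset of $E$. Approximation of an arbitrary c\`adl\`ag path is done by partitioning $[0, T]$ at points where oscillation exceeds $\varepsilon$ (of which there are only finitely many by right-continuity with left limits), and then quantising values using the countable dense subset of $E$. For the completeness half, given a Cauchy sequence $(x_n)$ in $(D([0,\infty); E), d)$, one extracts time-change homeomorphisms $\lambda_n$ that realise the defining infimum in the Skorokhod metric up to a small error, verifies that $(x_n \circ \lambda_n)$ is locally uniformly Cauchy in $E$, uses completeness of $E$ to obtain a pointwise limit, and then shows the limit function can be reparametrised back to a c\`adl\`ag function.

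The main obstacle is the completeness argument, specifically the fact that the $\lambda_n$ need not converge in a strong sense; instead one must control the jumps of the limiting path. This is precisely where one uses that $E$ is complete: to pass to the limit values at the (at most countably many) jump points of the emerging limit, while keeping the modulus of c\`adl\`ag continuity under control uniformly in $n$. Since the preceding theorem already encapsulates this standard argument (following, e.g., Chapter 3 of \cite{ethier2009markov}), for the proof here I would simply write: apply the preceding theorem with $E$ a complete separable metric space; the two conclusions together establish that $(D([0,\infty); E), d)$ is a complete separable metric space, and hence a Polish space, as claimed.
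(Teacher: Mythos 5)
Your proposal is correct and matches the paper, which states this result without proof as an immediate consequence of the preceding cited theorem from Chapter 3 of \cite{ethier2009markov} (separability of $D([0,\infty);E)$ from separability of $E$, completeness from completeness of $(E,r)$), applied to a Polish state space $E$. The additional sketch you give of the underlying separability and completeness arguments is standard and sound, but not needed beyond the one-line invocation.
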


\begin{theorem}[The a.s. Skorokhod representation theorem, \cite{ethier2009markov}, Theorem 1.8, Chapter 3]
\label{th:Skorokhod_representation_theorem}
Let  $(S,d)$ be a separable metric space. Suppose $P_n$, $n=1,2,\hdots$ and $P$  in $\mathcal{M}_1(S)$ satisfy $\displaystyle \lim_{n\rightarrow\infty}\rho(P_n, P)=0$ where $\rho$ is the metric in $\mathcal{M}_1(S)$. Then there exists a probability space $(\Omega, \mathcal{F}, \nu)$ on which are defined $S$- valued random variable $X_n$, $n=1,2, \hdots$ and $X$ with distributions $P_n$, $n=1,2,\hdots$ and $P$, respectively such that $\displaystyle\lim_{n\rightarrow \infty} X_n=X$ almost surely.
\end{theorem}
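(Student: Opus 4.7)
The plan is to realise everything on the canonical probability space $(\Omega,\mathcal F,\nu) = ([0,1],\mathcal{B}([0,1]),\mathrm{Leb})$ and to construct $X_n$ and $X$ simultaneously from a single point $\omega$ by ``reading off'' successively more precise location information about $\omega$ in a fixed nested sequence of finite Borel partitions of $S$. This is Skorokhod's classical construction: if one splits $S$ into cells that are small and whose boundaries are $P$-null, then weak convergence $P_n\to P$ upgrades (via the portmanteau theorem) to convergence of the cell masses, so the interval decompositions of $[0,1]$ that encode $P$ and $P_n$ by laying cells end-to-end nearly coincide for large $n$, which forces a.s.\ convergence.

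First I would construct, for every $k\in\N$, a countable Borel partition $\{A_{k,j}\}_{j\in\N}$ of $S$ with $\mathrm{diam}(A_{k,j})<2^{-k}$, $P(\partial A_{k,j})=0$, and level $k+1$ refining level $k$. Separability of $S$ supplies a countable dense set $\{s_i\}$; for each $s_i$ the set of radii $r>0$ with $P(\partial B(s_i,r))>0$ is countable, so one can choose $r_i\in(2^{-k-1},2^{-k})$ with $P(\partial B(s_i,r_i))=0$. Cover $S$ by $\{B(s_i,r_i)\}_i$, disjointify into $B(s_i,r_i)\setminus \bigcup_{\ell<i}B(s_\ell,r_\ell)$, and intersect cell-by-cell with the partition of the previous level to enforce nesting; boundaries of finite set operations on $P$-continuity sets remain $P$-null. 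By the portmanteau theorem, $p^n_{k,j}:=P_n(A_{k,j})\to p_{k,j}:=P(A_{k,j})$ for every $k,j$.

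Next I would order the cells of each level and partition $[0,1]$ into consecutive half-open intervals $I_{k,j}$ of length $p_{k,j}$ and $I^n_{k,j}$ of length $p^n_{k,j}$. Choose representatives $x_{k,j}\in A_{k,j}$ and define simple $S$-valued maps $Y^{(k)}(\omega)=x_{k,j}$ on $I_{k,j}$ and $Y_n^{(k)}(\omega)=x_{k,j}$ on $I^n_{k,j}$. Because the partitions nest with diameters shrinking to zero, the sequence $(Y^{(k)}(\omega))_k$ is Cauchy in $S$, so the pointwise limits $X(\omega):=\lim_k Y^{(k)}(\omega)$ and $X_n(\omega):=\lim_k Y_n^{(k)}(\omega)$ exist (in the completion of $S$, and by a supportedness argument almost surely in $S$ itself). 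A monotone class argument, using that finite unions of the $A_{k,j}$ generate $\mathcal{B}(S)$, shows $X\sim P$ and $X_n\sim P_n$.

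For the almost sure convergence fix $\omega\in[0,1]$ outside the countable set of endpoints of all $I_{k,j}$ and all $I^n_{k,j}$. For each $k$ and truncation index $J$, the partial sums $\sum_{j\le J}p^n_{k,j}$ converge to $\sum_{j\le J}p_{k,j}$ as $n\to\infty$; picking $J$ so large that the tail mass is $<\varepsilon$ and then $n$ large enough, one sees that every such $\omega$ not lying in the final $2\varepsilon$-fragment belongs to the same indexed interval $I_{k,j}=I^n_{k,j}$ for all large $n$, so that $Y_n^{(k)}(\omega)=Y^{(k)}(\omega)$ eventually. Combining this with $d(Y_n^{(k)}(\omega),X_n(\omega))<2^{-k+1}$ and $d(Y^{(k)}(\omega),X(\omega))<2^{-k+1}$ (from the shrinking diameters) yields $\limsup_n d(X_n(\omega),X(\omega))\le 2^{-k+2}$, whence $X_n\to X$ $\nu$-a.s.\ upon letting $k\to\infty$. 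The principal obstacle is the joint construction in Step 1: producing nested partitions of arbitrarily small diameter whose boundaries are simultaneously $P$-null requires careful bookkeeping, and verifying that the Cauchy limit $X(\omega)$ genuinely lies in $S$ (and not only in its completion) is where additional mild hypotheses on $S$ or $P$ (e.g.\ Polishness, or tightness of $P$) enter; under the stated separability of $S$ this is handled by noting that $P$ is concentrated on a $\sigma$-compact subset, which is automatic under the weak convergence hypothesis and the existence of the nested partition.
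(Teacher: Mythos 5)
First, note that the paper does not prove this statement at all: it is quoted in the appendix as Theorem 1.8 of Chapter 3 of Ethier--Kurtz, so there is no internal proof to compare against. Your construction is the classical Skorokhod--Billingsley one (nested partitions into $P$-continuity cells of shrinking diameter, matched interval decompositions of $[0,1]$, and the portmanteau theorem to align the two decompositions for large $n$), which is essentially the route taken in the cited references. The combinatorial core --- choosing the radii off the countable set of mass-carrying spheres, ordering the level-$(k+1)$ intervals compatibly inside their level-$k$ parents, the truncation-plus-tail argument for eventual agreement of $I_{k,j}$ and $I^n_{k,j}$, and the identification of the laws via the null-boundary cells --- is correct as sketched.

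The one genuine gap is your treatment of whether the Cauchy limit $X(\omega)$ lands in $S$ rather than merely in its completion. You assert that $P$ is concentrated on a $\sigma$-compact subset and that this ``is automatic under the weak convergence hypothesis'' for a separable metric space $S$. That is false: tightness (hence $\sigma$-compact inner approximation) of every Borel probability measure is Ulam's theorem and requires $S$ to be \emph{Polish}; on an incomplete separable metric space a Borel probability measure can fail to charge any compact set (take $S\subset[0,1]$ with inner Lebesgue measure $0$ and outer measure $1$, equipped with the trace of Lebesgue measure --- every compact $K\subset S$ is Lebesgue-null, so $P(K)=0$). As written, your argument therefore proves the theorem only for complete separable $S$, where the issue disappears because Cauchy sequences converge in $S$, or under an added tightness hypothesis on $P$. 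The general separable case, which is what the statement claims, needs a different device: one defines $X$ directly as an $S$-valued, $P$-distributed variable adapted to the cell tree rather than as a limit of representative points (Wichura's refinement; cf.\ Dudley, \emph{Real Analysis and Probability}, Theorem 11.7.2). For every application in this paper the state space is Polish ($\mathcal M_1(\R_+)$, $D([0,\infty);\mathcal M_1(\R_+))$, compact simplices), so the gap is harmless in context, but it should not be closed with the $\sigma$-compactness claim.
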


\begin{theorem}[Tightness criteria for measures on the Skorokhod space] 
See \cite{jakubowski1986skorokhod} Theorem 3.1+ \cite{Billingsley} Theorem 4.2 (1968)]
\label{th:jakubowski_criteria}
Let $(S,\mathcal{T})$ be a completely regular topological space with metrisable compact sets. Let $\mathbb{G}$ be a family of continuous functions on $S$ taking values in $\mathbb{R}$. Suppose that $\mathbb{G}$ separates points in $S$ and that it is closed under addition. Then a family $\{ \mathcal{L}^n\}_{n\in \NN}$ of probability measures in $\mathcal{M}_1(D([0,\infty);S))$ is tight iff  the two following conditions hold:
\begin{itemize}
	\item[(i)] For each $\e>0$ there is a compact set $K_\e \subset S$ such that
	$$\mathcal{L}^n(D([0,\infty);K_\e))>1-\e, \quad n\in \NN.$$
	\item[(ii)] The family $\{ \mathcal{L}^n\}_{n\in\NN}$ is $\mathbb{G}$-weakly tight, i.e., for any $g\in \mathbb{G}$ the family $\{\mathcal{L}^n~\circ~(\tilde g)^{-1}\}_{n\in \mathbb{N}}$ of probability measures on $D([0,\infty);\mathbb{R})$ is tight; where $\tilde g$ is defined as follows:
	$$\tilde g: D([0,\infty); S) \to D([0,\infty); \mathbb{R})$$
	with $[\tilde g(\nu)](t)=g(\nu(t))$ for $\nu\in D([0,\infty); S)$ (so that $\nu(t)\in S$).
\end{itemize}
\end{theorem}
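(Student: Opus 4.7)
The plan is to prove both implications separately, treating the forward direction as a warm-up and concentrating effort on the reverse direction, which contains all the substance.

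For the forward direction, I would assume $\{\mathcal{L}^n\}$ is tight. By the defining property of tightness, for each $\e > 0$ there is a compact $\mathcal{K}_\e \subset D([0,\infty); S)$ with $\inf_n \mathcal{L}^n(\mathcal{K}_\e) > 1 - \e$. To obtain (i), I would show that any compact subset of the Skorokhod space has its trajectories with ranges lying in a common relatively compact subset of $S$; this follows from the time-change characterisation of Skorokhod convergence restricted to each interval $[0,k]$, $k \in \N$, combined with a diagonal extraction. The compact $K_\e := \overline{\bigcup_{\nu \in \mathcal{K}_\e} \nu([0,\infty))}$ so constructed satisfies $\mathcal{K}_\e \subset D([0,\infty); K_\e)$. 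For (ii), it suffices to verify that each map $\tilde g$ is continuous: if $\nu_n \to \nu$ via a time change $\lambda_n$, continuity of $g$ yields $g \circ \nu_n \circ \lambda_n \to g \circ \nu$ uniformly on compact intervals, so $\tilde g(\mathcal{K}_\e)$ is compact in $D([0,\infty); \R)$ and carries at least $1-\e$ of the pushforward mass under every $n$.

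The reverse direction is where the interesting work lies. Fix $\e > 0$ and, by (i), restrict attention to $D([0,\infty); K_\e)$ with mass $>1 - \e/2$ under every $\mathcal{L}^n$; note that $K_\e$ is metrisable by hypothesis. The strategy is to build a concrete metric $d_\e$ on $K_\e$ from $\mathbb{G}$ so that the standard Billingsley compactness criterion on $D([0,\infty); (K_\e, d_\e))$, namely that relative compactness is governed by the c\`adl\`ag modulus $w'_{d_\e}(\cdot, \delta, T)$ shrinking uniformly as $\delta \downarrow 0$, can be verified from coordinate data supplied by (ii). Concretely I would extract a countable family $\{g_k\} \subset \mathbb{G}$ whose restrictions to $K_\e$ separate points (possible because $K_\e$ is compact metric and $\mathbb{G}$ separates points of $S$, using closure of $\mathbb{G}$ under addition to enrich any initial separating collection as necessary), embed $K_\e$ into $[0,1]^\N$ via $x \mapsto (\phi \circ g_k(x))_k$ for a bounded homeomorphism $\phi: \R \to [0,1]$, and define $d_\e$ as the pullback of the product metric. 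Continuity of the embedding on the compact Hausdorff space $K_\e$ makes it a homeomorphism onto its image, so $d_\e$ generates the subspace topology.

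With this metric in hand, $w'_{d_\e}(\nu, \delta, T)$ is controllable coordinatewise in terms of the real-valued moduli $w'(\tilde g_k(\nu), \delta, T)$. Hypothesis (ii) together with Billingsley's modulus characterisation of tightness in $D([0,\infty); \R)$ then supplies, for each $k$ and each $T, \eta > 0$, thresholds keeping the $k$-th coordinate modulus below $\eta$ on a set of $\mathcal{L}^n$-probability at least $1 - \e\, 2^{-k-2}$, uniformly in $n$. Intersecting these events over $k$ and combining with the restriction from (i) produces a relatively compact $A_\e \subset D([0,\infty); K_\e)$ with $\inf_n \mathcal{L}^n(A_\e) > 1 - \e$, as required. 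I expect the main obstacle to be reconciling the \emph{single} global Skorokhod time-change that appears in the definition of $w'_{d_\e}$ on $D([0,\infty); K_\e)$ with the \emph{independent} coordinatewise time-changes used to control each $\tilde g_k(\nu)$ separately; my route through this is to truncate to finitely many coordinates (for which a single time-change controls the product modulus up to arbitrarily small coordinate error, by the standard approximation of time-changes from Billingsley Chapter 3) and then exhaust the tail using the summable budget $\sum_k 2^{-k-2}$.
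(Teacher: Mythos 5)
First, note that the paper does not prove this statement at all: it is quoted (with the time interval changed from $[0,1]$ to $[0,\infty)$) from Jakubowski's Theorem 3.1, with Billingsley cited for the extension of the time domain, so there is no in-paper argument to compare yours against. Judged on its own terms, your proposal has a genuine gap at exactly the point you flag as the main obstacle, and the obstacle is not one of bookkeeping. Controlling the moduli $w'(\tilde g_k(\nu),\delta,T)$ for each coordinate $k\le K_0$ separately does \emph{not} control the joint modulus $w'_{d_\e}(\nu,\delta,T)$, even for two coordinates: take $\nu_n=(\mathbbm{1}_{[1/2,\infty)},\mathbbm{1}_{[1/2+1/n,\infty)})$. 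Each coordinate sequence converges (hence is tight) in $D([0,\infty);\R)$, but any partition witnessing a small joint modulus must place partition points at both $1/2$ and $1/2+1/n$, so its mesh is forced below $1/n$ and the pair is not tight in $D([0,\infty);\R^2)$. No ``standard approximation of time-changes'' rescues this; independently chosen coordinatewise partitions cannot in general be merged into one of comparable mesh. The hypothesis that saves the theorem is precisely the closure of $\mathbb{G}$ under addition, which you invoke only for the (unnecessary) purpose of extracting a countable separating family: in the example above, condition (ii) fails for $g_1+g_2$, whose image path has two unit jumps $1/n$ apart. Jakubowski's actual argument applies (ii) to finite sums $g_{k_1}+\cdots+g_{k_m}$ and uses a combinatorial lemma converting modulus bounds for the individual $g_k\circ\nu$ \emph{and for their sums} into a bound on the joint modulus; without that ingredient your reverse implication does not close.

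Second, your forward direction rests on the claim that a compact subset of $D([0,\infty);S)$ has all its trajectories contained in a common relatively compact subset of $S$. That is true on a compact time interval (the closure of the range of a single c\`adl\`ag path on $[0,T]$ is compact, and one passes to a finite net), but it is false on $[0,\infty)$: the paths equal to $x_0$ on $[0,n)$ and to $x_n$ on $[n,\infty)$ converge in $D([0,\infty);S)$ to the constant path $x_0$ while visiting an arbitrary sequence $\{x_n\}$. In fact the ``only if'' half of the statement, as transplanted to $[0,\infty)$ with condition (i) demanding confinement for \emph{all} times, is false as written (the single, automatically tight, law of Brownian motion violates (i)); this is why the paper's own remark after the theorem restricts the appeal to Jakubowski to the interval $[0,1]$ and only claims the generalisation in the direction actually used, namely that (i) and (ii) together imply tightness. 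Your effort is therefore best spent on repairing the reverse direction along the lines above, and on either restating (i) over compact time windows or dropping the ``only if'' claim on $[0,\infty)$.
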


\begin{remark} \cite{jakubowski1986skorokhod} only states the results when the time index is in $[0,1]$ (i.e. in a compact set) and the space is $D([0,1]; S)$. However, when the sequence of measures is tight, the result of \cite{Billingsley} allows the result to generalise when the space is $D([0,\infty); S)$.
\end{remark}


\begin{theorem}[Criteria for tightness in Skorokhod spaces (\cite{ethier2009markov}, Corollary 7.4, Chapter 3)]
\label{th:criteria_tightness_Skorokhod}
Let $(E,r)$ be a complete and separable metric space, and let $\{X_n\}$ be a family of processes with sample paths in $D([0,\infty); E)$. Then $\{X_n\}$ is relatively compact iff the two following conditions hold:
\begin{itemize}
	\item[(i)]For every $\eta>0$ and rational $t\geq 0$, there exists a compact set $\Lambda_{\eta, t} \subset E$ such that
	$$\liminf_{n\rightarrow \infty}\mathbb{P}\{X_n(t) \in \Lambda_{\eta,t}\} \geq 1-\eta.$$
	\item[(ii)] For every $\eta>0$ and $T>0$, there exits $\delta>0$ such that
	$$\limsup_{n\rightarrow\infty} \mathbb{P}\{ w'(X_n, \delta, T) \geq \eta \} \leq \eta.$$
\end{itemize}
where we have used the \textbf{modulus of continuity} $w'$ defined as follows: for $x\in D([0,\infty)\times E)$, $\delta>0$, and $T>0$:
$$w'(x,\delta, T) = \inf_{\{t_i\}} \max_i \sup_{s,t \in [t_{i-1}, t_i)} r(x(s), x(t)),$$
where $\{t_i\}$ ranges over all partitions of the form $0=t_0<t_1<\hdots< t_{n-1}<T\leq t_n$ with $\min_{1\leq i\leq n}(t_i-t_{i-1})>\delta$ and $n\geq 1$
\end{theorem}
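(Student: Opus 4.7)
The plan is to deduce both implications from Prohorov's theorem together with a Skorokhod-space analog of the Arzelà--Ascoli theorem. The latter characterizes the relatively compact subsets of $D([0,\infty);E)$ as those $K\subset D([0,\infty);E)$ for which, for every rational $t\geq0$, the set of values $\{x(t):x\in K\}$ has compact closure in $E$, and for every $T>0$ the modulus $w'$ satisfies $\lim_{\delta\downarrow0}\sup_{x\in K}w'(x,\delta,T)=0$. This Arzelà--Ascoli type statement is available in Ethier--Kurtz (Chapter~3, Theorem~6.3), and I will take it as the main black box.

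For the forward direction, assume $\{X_n\}$ is relatively compact, so that by Prohorov's theorem (cited above) the induced laws are tight on $D([0,\infty);E)$. Given $\eta>0$, fix a compact $K_\eta\subset D([0,\infty);E)$ with $\P\{X_n\in K_\eta\}\geq1-\eta$ for all $n$. For condition (i), fix a rational $t\geq0$; by the Arzelà--Ascoli characterization applied to $K_\eta$, the set $\Lambda_{\eta,t}:=\overline{\{x(t):x\in K_\eta\}}$ is compact in $E$, and on the event $\{X_n\in K_\eta\}$ we have $X_n(t)\in\Lambda_{\eta,t}$, giving the lower bound on $\liminf_n\P\{X_n(t)\in\Lambda_{\eta,t}\}$. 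For condition (ii), fix $\eta,T>0$; by the same characterization, $\sup_{x\in K_\eta}w'(x,\delta,T)\to0$ as $\delta\downarrow0$, so for $\delta$ small enough the event $\{w'(X_n,\delta,T)\geq\eta\}$ is disjoint from $\{X_n\in K_\eta\}$, and thus has probability at most $\eta$ for every $n$.

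For the reverse direction, assume (i) and (ii) and aim to construct, for each $\varepsilon>0$, a compact $K_\varepsilon\subset D([0,\infty);E)$ with $\liminf_n\P\{X_n\in K_\varepsilon\}\geq1-\varepsilon$; Prohorov then yields relative compactness. Enumerate the nonnegative rationals as $(t_k)_{k\in\N}$ and the positive integers as $T_k=k$. Apply (i) with $\eta=\varepsilon 2^{-k-1}$ and $t=t_k$ to obtain compacts $\Lambda_k\subset E$, and apply (ii) with $\eta_k=1/k$ and $T=T_k$ with tolerance $\varepsilon 2^{-k-1}$ to obtain a $\delta_k>0$. Define
\[
K_\varepsilon=\bigl\{x\in D([0,\infty);E):x(t_k)\in\Lambda_k\text{ for all }k,\text{ and }w'(x,\delta_k,T_k)\leq 1/k\text{ for all }k\bigr\}.
\]
By the Arzelà--Ascoli characterization this set is relatively compact in $D([0,\infty);E)$, and its closure remains of the same form (the constraints being closed). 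The union bound over $k$ shows $\liminf_n\P\{X_n\notin K_\varepsilon\}\leq\sum_{k\geq1}\varepsilon 2^{-k}=\varepsilon$, completing the tightness argument.

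The main obstacle is the rigorous invocation and checking of the Arzelà--Ascoli characterization for the Skorokhod space, since the projections $\pi_t:D([0,\infty);E)\to E$ are only continuous at continuity points of the limit path, so in the reverse direction one must verify that the constraints $\{x(t_k)\in\Lambda_k\}$ are closed (using right-continuity and countability of discontinuities) and that control of $w'$ at a countable collection of parameters $(\delta_k,T_k)$ suffices to force equicontinuity on each compact time interval. Once that black-box characterization is in hand, the rest of the argument is a clean diagonal/union-bound construction.
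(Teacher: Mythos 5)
This statement is quoted background material: the paper gives no proof of it, citing Ethier--Kurtz (Chapter 3, Corollary 7.4) directly, so there is no in-paper argument to compare against. Your sketch follows the standard textbook route (Prohorov plus the Arzel\`a--Ascoli characterisation of compact subsets of $D([0,\infty);E)$, Ethier--Kurtz Theorem 6.3), and the forward direction is correct as written.

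The reverse direction, however, has a genuine gap in the final union bound. You control each bad event only asymptotically, i.e.\ $\limsup_{n}\P(A_k^n)\le \e 2^{-k-1}$ for each fixed $k$, and then sum over the countably many $k$. But $\limsup_n\sum_k \P(A_k^n)\le\sum_k\limsup_n\P(A_k^n)$ is false for infinite sums without uniform domination (take $\P(A_k^n)=\mathbbm1\{k=n\}$: every $\limsup_n$ vanishes while the sum is identically $1$), and the $\liminf$ version you wrote is the wrong inequality direction for what tightness requires. The standard repair, which is the actual content of the Ethier--Kurtz/Billingsley proof, is to upgrade each asymptotic bound to a bound uniform in $n$ before summing: for each $k$ there is $n_k$ beyond which the bound holds, and the finitely many remaining indices $n<n_k$ are handled by enlarging $\Lambda_k$ (each single $E$-valued random variable on a Polish space is tight) and by shrinking $\delta_k$ (for each fixed path $x\in D$ one has $w'(x,\delta,T)\to0$ as $\delta\downarrow0$, so $\P\{w'(X_n,\delta,T)\ge 1/k\}\to0$ for each fixed $n$). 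Only then does $\sup_n\P\{X_n\notin K_\e\}\le\sum_k\sup_n\P(A_k^n)\le\e$ follow. Separately, your parenthetical claim that the constraints $\{x(t_k)\in\Lambda_k\}$ are closed is false --- evaluation maps are not continuous on the Skorokhod space (e.g.\ $\mathbbm1_{[1+1/n,2]}\to\mathbbm1_{[1,2]}$ but the values at $t=1$ do not converge) --- though this is harmless: Theorem 6.3 characterises when the \emph{closure} of a set is compact, so you only need $K_\e$ to satisfy conditions (a) and (b) there, not to be closed.
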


\begin{theorem}[Continuity criteria for the limit in Skorokhod spaces (\cite{ethier2009markov}, Theorem 10.2, Chapter 3)]
\label{th:continuity_criteria_limit_Skorokhod_space}
Let $(E,r)$ be a metric space.
Let $X_n$, $n=1,2,\hdots,$ and $X$ be processes with sample paths in $D([0,\infty);E)$ and suppose that $X_n$ converges in distribution to $X$. Then $X$ is a.s. continuous if and only if $J(X_n)$ converges to zero in distribution, where
$$J(x) = \int^\infty_0 e^{-u} [ J(x,u) \wedge 1] \, du$$
for 
$$J(x,u) = \sup_{0\leq t \leq u} r(x(t), x(t-)).$$
\end{theorem}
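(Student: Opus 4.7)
The plan is to recast both directions of the equivalence via the auxiliary observation that, for any $x \in D([0,\infty); E)$, the integrated quantity $J(x)$ vanishes if and only if $J(x,u) = 0$ for Lebesgue-a.e.\ $u$, which (by right-continuity of $u \mapsto J(x,u)$) forces $J(x,u)=0$ for every $u$, equivalently that $x$ has no jumps at all. Thus the assertion ``$X$ is a.s.\ continuous'' is the same as ``$J(X) = 0$ a.s.'', and the theorem reduces to the equivalence $J(X_n) \Longrightarrow 0$ in distribution iff $J(X) = 0$ a.s.

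For the forward direction, assume $X$ is a.s.\ continuous and invoke the a.s.\ Skorokhod representation theorem (Theorem \ref{th:Skorokhod_representation_theorem}) to pass to a probability space on which $X_n \to X$ a.s.\ in the Skorokhod topology. It is a standard feature of this topology that Skorokhod convergence to a \emph{continuous} limit upgrades to uniform convergence on compact time intervals. On the full-measure event where this upgrade holds and $X$ is continuous, fix $u>0$ and $\e>0$; for $n$ large enough that $\sup_{t \le u} r(X_n(t), X(t)) < \e/3$, the triangle inequality together with continuity of $X$ gives
\[
r(X_n(t), X_n(t-)) \le r(X_n(t), X(t)) + r(X(t), X(t-)) + r(X(t-), X_n(t-)) < 2\e/3
\]
for every $t \in (0,u]$. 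Hence $J(X_n, u) \to 0$ a.s.\ for each $u$, and bounded convergence inside the defining integral yields $J(X_n) \to 0$ a.s., a fortiori in distribution.

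The converse rests on establishing that $x \mapsto J(x)$ is lower semicontinuous on $D([0,\infty); E)$, and this is where I expect the main technical difficulty to lie. The idea is that Skorokhod convergence $x_n \to x$ provides time changes $\lambda_n \to \mathrm{id}$ uniformly on compacts with $x_n \circ \lambda_n \to x$ uniformly on compacts, so any jump of $x$ of size $j$ at time $t$ forces a jump of $x_n$ of size tending to $j$ at $\lambda_n^{-1}(t) \to t$. This yields $\liminf_n J(x_n, u) \ge J(x,u)$ for every $u$ that is not itself a jump time of $x$; since $x$ has at most countably many jumps, this holds for Lebesgue-a.e.\ $u$, and Fatou applied to the defining integral gives $\liminf_n J(x_n) \ge J(x)$. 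Granted this, apply Skorokhod representation once more, extract a subsequence along which $J(X_n) \to 0$ a.s., and conclude $0 = \liminf_n J(X_n) \ge J(X) \ge 0$ a.s., so $X$ is continuous a.s. The subtle point in the lower semicontinuity argument is that $u \mapsto J(x,u)$ is discontinuous precisely at the jump times of $x$, so the Fatou step has to be carried out on the co-countable set of continuity points of $J(x,\cdot)$; the time-change representation of Skorokhod convergence is what makes that careful choice of $u$ actually deliver a jump-capture at the approximant level.
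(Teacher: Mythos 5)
The paper never proves this statement---it is imported verbatim from Ethier and Kurtz (Theorem~10.2, Chapter~3) as appendix background---so there is no in-house argument to measure you against; the relevant benchmark is the standard proof in the cited source. That proof shows that $x\mapsto J(x)$ is a \emph{continuous} functional on $D([0,\infty);E)$ (the integration against $e^{-u}\,du$ is precisely what absorbs the countably many discontinuities of $u\mapsto J(x,u)$), after which both implications fall out of the continuous mapping theorem at once: $J(X_n)\Rightarrow J(X)$, and $J(X)=0$ a.s.\ iff $X$ is a.s.\ continuous. Your route is genuinely different: you prove only lower semicontinuity of $J$ (which suffices for the converse, via the jump-capture property of Skorokhod convergence, restriction to the co-countable set of non-jump times $u$, and Fatou), and you handle the forward direction by a separate direct argument using the upgrade of Skorokhod convergence to locally uniform convergence when the limit is continuous. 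Both halves are sound: the reduction of a.s.\ continuity to $J(X)=0$ a.s.\ is correct (monotonicity of $u\mapsto J(x,u)$ already gives the ``a.e.\ $u$ implies all $u$'' step, with no need to invoke right-continuity), the time-change estimate behind $\liminf_n J(x_n,u)\ge J(x,u)$ is the right mechanism, and the subsequence extraction in the converse is legitimate because convergence in distribution to the constant $0$ is convergence in probability, which transfers to the representation space since $J$ is measurable and the laws agree.

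One caveat worth recording: both of your directions lean on the a.s.\ Skorokhod representation theorem (Theorem~\ref{th:Skorokhod_representation_theorem}), which requires $D([0,\infty);E)$, hence $E$, to be separable, whereas the statement assumes only that $(E,r)$ is a metric space. The continuity-plus-mapping-theorem route avoids this hypothesis entirely. Since every application in this paper has $E$ separable, nothing is lost here, but if you want the theorem in its stated generality you should either prove full (two-sided) continuity of $J$---which costs almost nothing extra, as the same time-change bound $|J(x_n,\lambda_n(u))-J(x,u)|\le 2\sup_{s\le u} r\bigl(x_n(\lambda_n(s)),x(s)\bigr)$ works in both directions---or rephrase the forward direction without passing to an almost-sure coupling.
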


\bibliographystyle{plain}

\end{document}